\documentclass[11pt]{amsart}
\usepackage{amsmath,amssymb,mathtools}
\usepackage{microtype}
\usepackage[margin=1.08in]{geometry}
\usepackage{enumitem}
\usepackage{booktabs,longtable,array}
\usepackage{xcolor}
\usepackage{tikz}
\usetikzlibrary{arrows.meta,decorations.pathreplacing,patterns}
\usepackage{caption}
\definecolor{figblue}{RGB}{29,79,118}
\definecolor{figorange}{RGB}{170,82,22}
\definecolor{figteal}{RGB}{17,111,96}
\usepackage[colorlinks=true,linkcolor=blue!55!black,citecolor=blue!55!black,urlcolor=blue!55!black]{hyperref}
\usepackage{fancyhdr}

\numberwithin{equation}{section}
\newtheorem{theorem}{Theorem}[section]

\newtheorem{lemma}[theorem]{Lemma}
\newtheorem{corollary}[theorem]{Corollary}

\newtheorem{remark}[theorem]{Remark}

\DeclareMathOperator{\Sc}{Sc}
\DeclareMathOperator{\Ric}{Ric}
\DeclareMathOperator{\rad}{Rad}
\DeclareMathOperator{\Hess}{Hess}
\DeclareMathOperator{\tr}{tr}
\DeclareMathOperator{\diver}{div}
\DeclareMathOperator{\Lip}{Lip}
\DeclareMathOperator{\Vol}{Vol}

\DeclareMathOperator{\supp}{supp}
\DeclareMathOperator{\arcosh}{arcosh}
\newcommand{\R}{\mathbb R}
\newcommand{\HH}{\mathbb H}
\newcommand{\SSS}{\mathbb S}

\newcommand{\Q}{\mathcal Q}

\newcommand{\cF}{\mathcal F}
\newcommand{\op}{\mathrm{op}}
\newcommand{\tn}{\mathrm{tr}}
\newcommand{\eps}{\varepsilon}

\author{Xumin Jiang}
\address{Xumin Jiang, School of Sciences, Great Bay University, Dongguan 523000, China}
\email{xjiang@gbu.edu.cn}

\author{Zhehui Wang}
\address{Zhehui Wang, School of Sciences, Great Bay University, Dongguan 523000, China}
\email{wangzhehui@gbu.edu.cn}

\title{Hyperbolic comparison and rigidity for the hyperspherical radius}

\date{}

\begin{document}

\begin{abstract}
We prove Gromov's hyperbolic hyperspherical radius comparison and rigidity conjectures in all dimensions $n\ge3$, without the spin assumption. Let $(X^n, g)$ be a smooth, oriented, connected, compact Riemannian manifold with nonempty smooth connected boundary $S$. If $\Sc_g\ge-n(n-1)$ and $H_S\ge(n-1)\coth R$ for some $R>0$, then $\rad(S,g_S)\le\sinh R$, with equality if and only if $(X^n,g)$ is isometric to the closed hyperbolic geodesic ball of radius $R$. Under the same curvature bounds, every smooth $1$-Lipschitz map of nonzero degree from $S$ to the boundary of the hyperbolic geodesic ball of radius $R$ extends to an isometry of the fillings. Here, $g_S$ is the induced boundary metric, $H_S$ is the mean curvature of $S$ with respect to the outward unit normal, and $\rad(S,g_S)$ denotes the hyperspherical radius.
\end{abstract}

\maketitle

\section{Introduction}
The scalar curvature comparison for manifolds with boundary asks how a lower bound of scalar curvature constrains the geometry of the boundary. A central problem is to obtain a sharp upper bound for the intrinsic size of the boundary from a lower bound for its mean
curvature, and to characterize the geometry of the filling when equality holds.
Gromov formulated such a comparison using the hyperspherical radius \cite{GromovLectures, GromovBoundary}, which measures the size of boundary through maps of nonzero degree to round spheres.

For a closed oriented Riemannian $m$-manifold $(S,g_S)$,
its {\it hyperspherical radius} is
\begin{equation}\label{eq:radius}
 \rad(S,g_S)=\sup\{r>0:\exists \text{ smooth map }F:S\longrightarrow
 \SSS^m(r),\ \deg F\ne0,\ \Lip F\le1\}.
\end{equation}
where $\SSS^m(r)$ carries its round intrinsic metric. This
definition does not require $S$ to be diffeomorphic to a sphere.

Let $(X^n,g)$ be a compact, connected, oriented, smooth Riemannian
manifold with nonempty smooth connected boundary $S=\partial X$.
Let $g_S$ be the induced boundary metric, and $H_S$ be the mean curvature of $S$ with respect to the outward unit normal. Gromov \cite[Section 4.3]{GromovLectures} (see also \cite[Section 1]{GromovBoundary}) conjectured the sharp estimate
\begin{equation}\label{eq:gromov-hyperbolic}
 \inf_S H_S
 \le
 (n-1)\sqrt{1+\rad(S,g_S)^{-2}}
 \qquad
 \text{if }\Sc_g\ge-n(n-1).
\end{equation}
The model is a geodesic ball
of radius $R$ in hyperbolic space of sectional curvature $-1$.
Its boundary is isometric to $\SSS^{n-1}(\sinh R)$ and has mean
curvature $(n-1)\coth R$, so equality holds in
\eqref{eq:gromov-hyperbolic}. Gromov also explicitly formulated this conjecture as a comparison of hyperspherical radii.
Equivalently, under the scalar curvature bound $\Sc_g\ge-n(n-1)$, the conjecture asserts that, for every $R>0$, if 
$H_S\ge(n-1)\coth R$ on $S$, then $\operatorname{Rad}(S,g_S)\le\sinh R$.

Our main theorem establishes this comparison and its equality case in every dimension $n\ge3$, without the spin assumption. We state the result as a comparison of hyperspherical radii.

\begin{theorem}\label{thm:hyperbolic}
Let $(X^n,g)$ be a smooth, oriented, connected, compact Riemannian
manifold of dimension $n\ge3$, with nonempty smooth connected
boundary $S=\partial X$. Let $g_S$ be the induced boundary metric, and $H_S$ be the mean curvature of $S$ with respect to the outward unit normal. Suppose that, for some constant $R>0$,
\begin{equation}\label{eq:thm:main-1}
 \Sc_g\ge-n(n-1)\quad\text{in }X,
 \qquad
 H_S\ge(n-1)\coth R\quad\text{on }S.
\end{equation}
Then
\begin{equation}\label{eq:thm:main-2}
 \rad(S,g_S)\le\sinh R.
\end{equation}
Equality holds if and only if $(X,g)$ is isometric to the closed
hyperbolic ball $\overline{B_{\HH^n}(R)}$. 
\end{theorem}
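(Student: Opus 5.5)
We argue by contradiction and reduce the comparison to a rigidity statement for $1$-Lipschitz maps onto model spheres, proved with spinor-free tools: $\mu$-bubbles, or harmonic-type level-set functionals. Suppose $\rad(S,g_S)>\sinh R$. Then there is $r>\sinh R$ and a smooth map $F:S\to\SSS^{n-1}(r)$ with $\deg F\ne0$ and $\Lip F\le1$. Write $r=\sinh R'$ with $R'>R$. Composing with the radial scaling $\SSS^{n-1}(\sinh R')\to\SSS^{n-1}(\sinh R)$, we get a map $f:S\to\partial B_{\HH^n}(R)$ with $\Lip f\le \sinh R/\sinh R'<1$, i.e.\ strictly contracting. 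Both the inequality and the rigidity therefore follow from one statement. If $\Sc_g\ge-n(n-1)$, $H_S\ge(n-1)\coth R$, and $f:S\to\partial B_{\HH^n}(R)$ is $1$-Lipschitz of nonzero degree, then $f$ is an isometry and $(X,g)\cong\overline{B_{\HH^n}(R)}$. This is the extension statement announced in the abstract. A strictly contracting $f$ cannot be an isometry, which gives \eqref{eq:thm:main-2}. For the equality case, $\rad=\sinh R$ is attained by a $1$-Lipschitz map, by an Arzel\`a--Ascoli limit of maps with $r\uparrow\sinh R$; degree is preserved under uniform limits after smoothing. Rigidity then applies.

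To prove the map statement without spin, I would first make the problem conformal or exterior: glue to $X$ along $S$ a collar modeled on the hyperbolic exterior $\HH^n\setminus B(R)$. More precisely, form $\hat X=X\cup_S (S\times[0,\infty))$ with the warped metric $dt^2+\sinh^2(R+t)\,f^*(\text{round})$, suitably regularized. The conditions $\Lip f\le 1$ and $H_S\ge(n-1)\coth R$, which is the mean curvature of the model sphere, give a corner that satisfies Miao's/Shi--Tam type mean curvature matching. After a small smoothing, $\hat X$ then has $\Sc\ge-n(n-1)$ in a weak sense. The end is asymptotically hyperbolic only in a ``degree-covered'' sense, since $f$ is not a diffeomorphism. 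So instead of the positive mass theorem, I would run a $\mu$-bubble/level-set argument. Consider the distance-like function $\rho=d(\cdot,S)$ extended with $t$ on the collar. Study a functional of the form $\int_{\Sigma}1-\int h(\rho)$, where $h$ is chosen as the hyperbolic mean curvature profile $(n-1)\coth(R+\rho)$, in the spirit of Chodosh--Li and Gromov's torical band estimates. For $n\ge3$, stable $\mu$-bubbles exist in all dimensions by the regularity theory, after the usual dimension caveats that are now removed by generic-regularity results or by Lesourd--Unger--Yau type arguments. The degree of $f$ forces each bubble $\Sigma$ to map with nonzero degree onto a model sphere. Stability plus $\Sc\ge-n(n-1)$ then yields a scalar curvature bound on $\Sigma$ of spherical type, giving a dimension-reduction induction.

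The main obstacle is closing this induction sharply. The inductive statement must be exactly the spherical Llarull-type rigidity for $\Sigma^{n-1}$ with a warped weight, i.e.\ a weighted Llarull theorem in dimension $n-1$ with the sharp constant and without spin. Here I would try the harmonic-map/level-set approach for maps to spheres (Stern-type inequality, or the recent non-spin Llarull proofs via $\mu$-bubbles with the $\sin$-profile) as the base tool, and verify that the warped constants match the model. In the equality case, every inequality used becomes an equality. Stability equality gives a totally umbilic bubble foliation and $\Ric$ equal to the hyperbolic value along it, and $f$ becomes a local isometry. Then $X$ is foliated by umbilic spheres with hyperbolic warping, hence isometric to $\overline{B_{\HH^n}(R)}$, and $\deg f=\pm1$ by simple connectivity of the sphere.
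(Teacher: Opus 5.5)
\textbf{There is a genuine gap: the construction you rely on fails at its first step, and the inductive step is left unproved.} Your reduction to the map-rigidity statement is logically fine, but the argument meant to prove that statement does not work as written.

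\textbf{The collar gluing.} The collar metric $dt^2+\sinh^2(R+t)\,f^*g_{\SSS^{n-1}}$ is not a Riemannian metric wherever $df$ has a kernel, and a $1$-Lipschitz map of nonzero degree may have critical points. At $t=0$ its tangential part is $\sinh^2R\,f^*g_{\SSS^{n-1}}\le g_S$, not $g_S$. So the induced metrics do not agree across $S$, and Miao/Shi--Tam corner smoothing cannot be invoked: $\Lip f\le1$ is an inequality, not a matching. You could repair the matching with $dt^2+\bigl(\sinh(R+t)/\sinh R\bigr)^2g_S$. The mean curvature then matches, $(n-1)\coth R$, but the collar has $\Sc=-n(n-1)+\bigl(\sinh^2R\,\Sc_{g_S}-(n-1)(n-2)\bigr)/\sinh^2(R+t)$. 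Keeping $\Sc\ge-n(n-1)$ would therefore require $\Sc_{g_S}\ge(n-1)(n-2)/\sinh^2R$, which is not assumed.

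\textbf{The bubbles and the induction.} A $\mu$-bubble that carries any information about $X$ must enter $X$. There you have no map to a sphere with controlled Lipschitz constant. Your claim that $\deg f$ forces each bubble to map with nonzero degree onto a model sphere is exactly what fails without an interior metric comparison, which is the hypothesis Chai--Wan impose. The inductive step itself, a sharp spin-free warped Llarull theorem on possibly singular bubbles in every dimension, is left as something you ``would try.'' Generic regularity does not remove singularities in all dimensions, so this step is the whole difficulty, not a detail.

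\textbf{The missing idea.} The paper uses the Jang equation with initial data $-g$ (Lemma~\ref{lem:jang}). Its complete graph $\Gamma$ has boundary isometric to $(S,g_S)$. By the Schoen--Yau identity, $\mathcal E_\xi(v)=\int_\Gamma|dv+vq|^2+\int_\Gamma Vv^2+\int_S\bigl(H_{\partial\Gamma}-q(\eta_\Gamma)-\xi\bigr)v^2$ with $V\ge0$. On the boundary, $H_{\partial\Gamma}-q(\eta_\Gamma)=H_S\sqrt{1+(\nu u)^2}+(n-1)\nu u\ge\sqrt{H_S^2-(n-1)^2}\ge(n-1)/\sinh R$ (Lemma~\ref{lem:graph-form}). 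This converts the hyperbolic problem into a Euclidean-type scalar--mean problem with boundary map $\Phi$ satisfying $D_\Phi\le(n-1)/r'$. The non-spin capillary descent of WWXZ then applies, once it is localized on the possibly noncompact graph (Theorem~\ref{thm:obstruction}, Corollary~\ref{cor:positive-form}).

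\textbf{The equality case.} Your treatment of equality has a further gap. The Arzel\`a--Ascoli limit is only Lipschitz, and smoothing it costs $\Lip\le1+\varepsilon$, so your smooth-map rigidity statement does not apply directly. The paper never needs the radius to be attained. It uses maps to $\SSS^{n-1}(r')$ with $r'<\sinh R$ together with the strict-margin Lemma~\ref{lem:anchor} to force all remainder terms to vanish. This gives $\Hess_gu=-Wg$, hence $\Hess_gW=Wg$, and Lemma~\ref{lem:radial-closure} finishes the identification with the hyperbolic ball.
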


Theorem \ref{thm:hyperbolic} also yields the sharp mean curvature estimate \eqref{eq:gromov-hyperbolic}, with equality if and only if $(X,g)$ is isometric to a hyperbolic geodesic ball.

Gromov \cite[Section 3.5]{GromovLectures} also proposed the corresponding rigidity conjecture for smooth $1$-Lipschitz maps of nonzero degree to the boundaries of hyperbolic balls, with the boundary mean curvature bounded below by that of
the target sphere. The equality case of Theorem \ref{thm:hyperbolic} yields the
following corollary, confirming this conjecture in all dimensions $n\ge3$ without a spin assumption. Here we identify $\partial B_{\HH^n}(R)$ with $\SSS^{n-1}(\sinh R)$ via the standard isometry.

\begin{corollary}\label{cor:hyper-map}
Under the hypotheses of Theorem~\ref{thm:hyperbolic}, let $F:(S,g_S)\longrightarrow\SSS^{n-1}(\sinh R)$
be a smooth map satisfying
$$
 \deg F\ne0,
 \qquad
 \Lip F\le1.
$$
Then $F$ is an isometry. Moreover, there exists an isometry $\overline F:(X,g)\longrightarrow
 \overline{B_{\HH^n}(R)}$
such that $\overline F|_S=\iota\circ F$, where
$\iota:\SSS^{n-1}(\sinh R)
 \longrightarrow\partial B_{\HH^n}(R)$
is the standard radial isometry.
\end{corollary}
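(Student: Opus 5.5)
The plan is to deduce Corollary~\ref{cor:hyper-map} from the equality case of Theorem~\ref{thm:hyperbolic}, followed by a rigidity argument for maps between round spheres.

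\emph{Step 1: equality of radii.} The map $F$ itself is admissible in the definition \eqref{eq:radius} with $r=\sinh R$, so $\rad(S,g_S)\ge\sinh R$. Theorem~\ref{thm:hyperbolic} gives the reverse inequality, hence $\rad(S,g_S)=\sinh R$. The rigidity part of the theorem then yields an isometry $\Phi:(X,g)\to\overline{B_{\HH^n}(R)}$. Its restriction to the boundary is an isometry $\phi=\iota^{-1}\circ\Phi|_S:(S,g_S)\to\SSS^{n-1}(\sinh R)$.

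\emph{Step 2: $F$ is an isometry.} Set $G=F\circ\phi^{-1}:\SSS^{n-1}(\sinh R)\to\SSS^{n-1}(\sinh R)$. This is a smooth $1$-Lipschitz self-map with $\deg G=\pm\deg F\ne0$. I would show that $G$ is an isometry as follows.

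Since $\deg G\neq0$, the map $G$ is surjective. Using the area formula,
$$\Vol(\SSS^{n-1})\,|\deg G|\le\int|\det dG|\le\int 1=\Vol(\SSS^{n-1}),$$
because $|dG|_{\op}\le1$ forces $|\det dG|\le1$. Hence $|\deg G|=1$, and moreover $|\det dG|=1$ almost everywhere, so by continuity everywhere. Combined with $|dG|_{\op}\le1$, every singular value of $dG$ equals $1$. Thus $dG$ is a linear isometry at each point, and $G$ is a local isometry.

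A local isometry between compact connected manifolds is a covering map. Since its degree is $\pm1$, it is a diffeomorphism, hence a global isometry. Consequently $F=G\circ\phi$ is an isometry.

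\emph{Step 3: extension.} The map $G$ is an isometry of the round sphere, so it is the restriction of some $A\in O(n)$. Under $\iota$, $A$ acts on $\partial B_{\HH^n}(R)$ as the restriction of the hyperbolic isometry $\hat A$ that fixes the center and acts by $A$ in geodesic polar coordinates. Define $\overline F=\hat A\circ\Phi$. This is an isometry of $(X,g)$ onto $\overline{B_{\HH^n}(R)}$, and on $S$ it satisfies
$$\overline F|_S=\hat A\circ\iota\circ\phi=\iota\circ G\circ\phi=\iota\circ F.$$

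\emph{Main obstacle.} The only delicate point is Step 2, namely showing rigidity of a $1$-Lipschitz map of nonzero degree. The area formula argument handles it cleanly, provided one notes that $|\det dG|\le 1$ follows pointwise from $\Lip G\le 1$.
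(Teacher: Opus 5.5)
Your proposal is correct and follows essentially the same route as the paper. You get $\rad(S,g_S)=\sinh R$ and an isometry $(X,g)\cong\overline{B_{\HH^n}(R)}$ from Theorem~\ref{thm:hyperbolic}. You then use the degree--volume inequality with $|\det dG|\le 1$ to force $|\deg|=1$ and all singular values equal to $1$, and extend the resulting boundary isometry radially by an orthogonal map fixing the center. The differences are cosmetic: you run the volume argument on the self-map $G=F\circ\phi^{-1}$ of the round sphere rather than on $F$ directly, and you cite the standard facts that round-sphere isometries lie in $O(n)$ and extend to hyperbolic isometries fixing the center, which the paper verifies by hand.
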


Gromov \cite[Section 1]{GromovBoundary} established the Euclidean hyperspherical radius comparison for spin fillings, conjectured rigidity in the equality case, and asked whether the spin assumption could be removed. Theorem \ref{thm:hyperbolic} and the methods used in its proof, together with \cite[Theorem 1.4]{WWXZ}, yield the following
comparison and rigidity result without a spin assumption.
\begin{corollary}\label{cor:euclidean-radius}
Let $(X^n,g)$ be a smooth, oriented, connected, compact Riemannian
manifold of dimension $n\ge3$, with nonempty smooth connected
boundary $S=\partial X$. Let $g_S$ be the induced boundary metric,
and let $H_S$ be the mean curvature with respect to the outward
unit normal. Suppose that, for some $r>0$,
\begin{equation}\label{eq:cor-1}
 \Sc_g\ge0\quad\text{in }X,
 \qquad
 H_S\ge\frac{n-1}{r}\quad\text{on }S.
\end{equation}
Then 
\begin{equation}\label{eq:cor-2}
\rad(S,g_S)\le r,
\end{equation}
Equality holds if and only if $(X,g)$
is isometric to the closed Euclidean ball of radius $r$.
\end{corollary}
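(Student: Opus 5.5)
The plan is to deduce Corollary~\ref{cor:euclidean-radius} from Theorem~\ref{thm:hyperbolic} by a rescaling and limiting argument for the inequality, and to obtain rigidity from \cite[Theorem 1.4]{WWXZ}.

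\emph{Inequality.} Suppose $\Sc_g\ge0$ and $H_S\ge(n-1)/r$. For $\lambda>0$ set $g_\lambda=\lambda^{-2}g$. Then
\[
\Sc_{g_\lambda}=\lambda^2\Sc_g\ge0\ge-n(n-1),\qquad H_S^{g_\lambda}=\lambda H_S\ge(n-1)\lambda/r .
\]
Choose $R_\lambda$ with $\coth R_\lambda=\lambda/r$; this is possible once $\lambda>r$. Theorem~\ref{thm:hyperbolic} then gives $\rad(S,\lambda^{-2}g_S)\le\sinh R_\lambda$. The hyperspherical radius scales linearly, so $\rad(S,g_S)\le\lambda\sinh R_\lambda$. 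Now $\sinh R_\lambda=1/\sqrt{\coth^2R_\lambda-1}=r/\sqrt{\lambda^2-r^2}$, hence
\[
\rad(S,g_S)\le \frac{\lambda r}{\sqrt{\lambda^2-r^2}}\longrightarrow r \qquad (\lambda\to\infty),
\]
which proves \eqref{eq:cor-2}.

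\emph{Rigidity.} The Euclidean ball of radius $r$ attains equality. Conversely, assume $\rad(S,g_S)=r$. I would first show the supremum in \eqref{eq:radius} is attained. Take $1$-Lipschitz maps $F_k:S\to\SSS^{n-1}(r_k)$ of nonzero degree with $r_k\uparrow r$. Rescale them to maps into $\SSS^{n-1}(r)$ with $\Lip\le r/r_k\to1$. Arzel\`a--Ascoli gives a $1$-Lipschitz limit $F$, whose degree is nonzero by homotopy invariance. Smoothing $F$ costs only an arbitrarily small Lipschitz error, which can be absorbed by perturbing $r$ slightly. The precise formulation depends on whether \cite[Theorem 1.4]{WWXZ} accepts Lipschitz maps or needs an approximation argument. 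Its rigidity statement, in the non-spin setting, is that a nonzero-degree $1$-Lipschitz map from the boundary of a filling with $\Sc\ge0$ and $H\ge(n-1)/r$ onto $\SSS^{n-1}(r)$ forces the filling to be isometric to the flat ball. Applied to $F$, this yields the equality case.

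\emph{Expected obstacle.} The delicate point is the equality case, because the limiting argument only yields non-strict inequalities. If \cite[Theorem 1.4]{WWXZ} demands a smooth map, the rescaling method of Theorem~\ref{thm:hyperbolic} should be used instead to show directly that equality forces $H_S\equiv(n-1)/r$, $\Sc_g\equiv0$, and $\Lip F=1$ with $F$ a local isometry. Then $(S,g_S)$ is a round sphere, and the rigidity in \cite{WWXZ} (or a Shi--Tam type positive mass argument) identifies $X$ with the Euclidean ball.
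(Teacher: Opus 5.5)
Your proof of the inequality \eqref{eq:cor-2} is correct and matches the paper's. The paper applies \eqref{eq:gromov-hyperbolic} to $\varepsilon g$ and lets $\varepsilon\downarrow0$, which is the same computation as your $\lambda^{-2}g$ with $\coth R_\lambda=\lambda/r$.

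The rigidity part has a genuine gap, even though you correctly identify the Lipschitz limit $F$ and the final appeal to \cite[Theorem 1.4]{WWXZ}.

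\emph{The smoothing shortcut fails.} You propose to smooth $F$ and absorb the error by perturbing $r$. This breaks down precisely because you are in the equality case. A smooth map into $\SSS^{n-1}(r)$ with $\Lip\le1+\varepsilon$ (equivalently, a $1$-Lipschitz map into $\SSS^{n-1}(r/(1+\varepsilon))$) meets the hypothesis of \cite[Theorem 1.4]{WWXZ} only if $H_S\ge(1+\varepsilon)(n-1)/r$, which you do not have. As the paper points out, that theorem needs a smooth extremal map.

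\emph{The fallback does not repair this.} The rescaling argument only ever uses the strict inequality $\rad(S,g_S)=r<\lambda r/\sqrt{\lambda^2-r^2}$. So the equality case of Theorem~\ref{thm:hyperbolic} is never triggered, and no saturation information about the limit map comes out of it.

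\emph{Local isometry is not automatic.} Even if you knew $(dF)^*dF=I$ almost everywhere, a merely Lipschitz $F$ need not be a local isometry. Folds, modeled locally on $(x,y)\mapsto(|x|,y)$, also have orthogonal differential almost everywhere.

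The paper supplies the two missing steps; normalize $r=1$.

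\emph{Step 1: saturation via a Lipschitz obstruction.} Apply Corollary~\ref{cor:positive-form} directly on the compact manifold $X$; no Jang graph is needed since $\Sc_g\ge0$. Extend it to Lipschitz boundary maps $\Psi$. For $Q_\Psi(v)=\int_X(|dv|^2+\frac12\Sc_gv^2)+\int_S(H_S-\|d\Psi\|_{\tr})v^2$, show that the infimum $\lambda(\Psi)$ of its Rayleigh quotient is $\le0$ whenever $\deg\Psi\ne0$. The tools are $W^{1,p}$ approximation with $p>n-1$, the bound $\bigl|\|A\|_{\tr}-\|B\|_{\tr}\bigr|\le\sqrt{n-1}\,|A-B|$, and coercivity. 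For the limit map $F$, the form $Q_F$ is nonnegative, so $\lambda(F)=0$ with a constant minimizer. This gives $\Sc_g\equiv0$, $H_S\equiv n-1$, and $\|dF\|_{\tr}=n-1$ a.e., i.e. $(dF)^*dF=I$ a.e.

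\emph{Step 2: smoothness via first variation.} Vary $F_t=(F+t\zeta)/|F+t\zeta|$. Then $\|dF_t\|_{\tr}=n-1+tb_\zeta+O(t^2)$ with $b_\zeta=\langle dF,d\zeta\rangle-(n-1)\langle F,\zeta\rangle$, hence $\lambda(F_t)=-t\Vol_g(X)^{-1}\int_Sb_\zeta+o(|t|)$. Since $\lambda(F_t)\le0$ for both signs of $t$, you get $\int_Sb_\zeta=0$. That is the weak linear equation $-\Delta_{g_S}F=(n-1)F$, so elliptic regularity makes $F$ smooth. Only then does \cite[Theorem 1.4]{WWXZ} apply, and the scaling constant it produces is fixed by $H_S=n-1$.
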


Total mean curvature comparison provides one approach to relating scalar curvature and boundary geometry, as illustrated by the Brown--York quasi-local mass \cite{BrownYork}. Shi and Tam \cite{ShiTam} established positivity and rigidity for compact oriented three-dimensional fillings with nonnegative scalar curvature and boundary components of positive Gauss and mean curvature.
For a connected boundary, its total mean curvature is bounded above by that of its Euclidean isometric embedding, with equality precisely when the filling is isometric to a
Euclidean domain. They also obtained a higher-dimensional result for spin fillings with strictly convex Euclidean boundary embeddings. Wang and Yau \cite{WangYau} and
Shi and Tam \cite{ShiTamHyperbolic} developed hyperbolic counterparts involving weighted mean curvature integrals.

Another approach studies scalar curvature through maps of nonzero degree to model spaces.
Using Dirac operator methods, Gromov and Lawson \cite{GromovLawson} showed how contracting maps give obstructions to positive scalar curvature.
Llarull \cite{Llarull} established scalar curvature rigidity of the round sphere under area-nonincreasing maps of nonzero degree from closed spin manifolds. Goette and Semmelmann \cite{GoetteSemmelmann} extended this comparison to suitable reference manifolds with nonnegative curvature operator.
For manifolds with boundary, Lott \cite{Lott} developed an index-theoretic comparison involving both scalar curvature and boundary mean curvature, under appropriate spin and
topological assumptions. More recently, Wang, Wang, Xie, and Zhu \cite[Theorem 1.1]{WWXZ} removed the spin assumption from Llarull's rigidity theorem in all dimensions.

For compact spin fillings with nonnegative scalar curvature, Gromov \cite[Section 1]{GromovBoundary} established a sharp upper bound for the minimum of the boundary mean curvature in terms of the hyperspherical radius of the boundary. Cecchini, Hirsch, and Zeidler \cite[Theorem 1.5]{CHZ} proved the rigidity statement for spin fillings.
Both comparison and rigidity also follow by combining
the lower bound of Hijazi, Montiel, and Zhang
\cite[Theorem 6]{HMZ} for the first positive eigenvalue of the boundary Dirac operator with B\"ar's sharp upper bound in terms of hyperspherical radius \cite[Main Theorem and Theorem 3]{Baer}.

The comparison and rigidity of \eqref{eq:gromov-hyperbolic} are known for spin fillings. In the hyperbolic setting, Hijazi, Montiel, and Rold\'an \cite[Theorem 2]{HMR} proved the corresponding boundary Dirac estimate under
a negative scalar curvature lower bound. Brendle, Tsiamis, and Wang \cite[Corollary 2]{BTW} gave another proof and observed that combining this estimate with B\"ar's upper bound yields \eqref{eq:gromov-hyperbolic}. Ammann and Lockman established rigidity in the equality case \cite[Theorem A]{AmmannLockman}. For more general reference domains, Sun  \cite[Theorem 2.1]{Sun} proved a spin comparison theorem for smooth parabolically convex domains in hyperbolic space.

Capillary minimization provides a geometric approach to
scalar--mean curvature comparison without a spin assumption.
Li \cite{LiPolyhedron} (with correction \cite{LiPolyhedron-2}) used capillary minimal surfaces to prove dihedral rigidity for certain classes of three-dimensional polyhedra. For smooth boundaries, Gromov \cite[Section5.8.1]{GromovLectures} proposed a dimension reduction
approach based on capillary hypersurfaces.
Following this strategy, Wang, Wang, and Zhu \cite[Theorem 1.2 and Corollary 1.3]{WWZ} established Euclidean scalar--mean rigidity and the sharp hyperspherical radius comparison, including rigidity in the equality case, in dimensions three and four.
Ko and Yao \cite[Theorem~1.4]{KoYao} developed capillary minimal slicing to prove scalar--mean rigidity for smooth convex Euclidean domains in dimension four under a rescaled mean curvature comparison
on the boundary.

Chai and Wan \cite[Theorems 1.4 and 1.10]{ChaiWanBands} used warped $\mu$-bubbles to prove rigidity theorems for spherical warped products in dimensions three and four.
Their results include rigidity of hyperbolic geodesic balls when the metric, scalar curvature, and boundary mean curvature
are bounded below by the corresponding hyperbolic quantities. Here the metric comparison is imposed throughout the ball.
Theorem \ref{thm:hyperbolic} establishes the sharp hyperspherical radius comparison and characterizes equality for arbitrary compact oriented fillings in all dimensions $n\ge3$, without imposing an interior metric comparison.

In higher dimensions, singularities of minimizing hypersurfaces obstruct direct iteration of dimension descent.
Bi and Zhu \cite{BiZhu} developed a singular dimension descent method for positive scalar curvature obstructions in arbitrary dimensions. Their approach combines weighted scalar curvature, weighted $\mu$-bubble stability, and cylindrical conformal blow up, retaining the curvature and topological information needed for successive descent steps. Related weighted conformal ideas were used by the second author and Zhu \cite{WangZhuDecay} to construct complete metrics of uniformly positive scalar curvature under a sharp supercritical quadratic decay condition.
The weighted curvature formulation and stability identities of Bi and Zhu \cite[Section 2]{BiZhu} are also particularly relevant to the argument below.

For manifolds with boundary, Wang, Wang, Xie, and Zhu
\cite{WWXZ} developed weighted scalar--mean comparison and conformal blow up methods that also control the boundary geometry of capillary hypersurfaces. These methods allow
dimension descent to continue despite interior and boundary singularities, yielding Llarull's scalar curvature rigidity theorem and scalar--mean rigidity for strictly
convex Euclidean domains in all dimensions without the spin assumption.

\subsection*{Outline of the proof.}
Our proof uses the Jang equation together with the weighted scalar curvature framework of Bi and Zhu \cite{BiZhu} and the capillary comparison theory of Wang, Wang, Xie, and Zhu \cite{WWXZ}.
We construct a proper, complete Jang graph whose boundary is isometric to $(S,g_S)$.
Integrating the Schoen--Yau identity yields a quadratic form inequality in which the boundary contribution from the divergence term combines with the mean curvature of the
graph boundary to give the sharp lower bound
$(n-1)/\sinh R$.
To treat the possibly noncompact graph, we establish the weighted obstruction in Theorem \ref{thm:obstruction}. A suitable forcing term keeps the minimizing interface in a compact region, allowing us to apply the dimension descent methods of \cite{WWXZ}.
The corresponding quadratic form obstruction,
Corollary \ref{cor:positive-form}, remains valid even when the positive interior weight tends to zero at infinity.

We prove comparison and rigidity simultaneously by assuming
$\rad(S,g_S)\ge\sinh R$.
If any of the nonnegative interior or boundary remainder terms in the graph inequality were nonzero, it would yield a strict margin and contradict the quadratic form obstruction.
This argument uses maps of nonzero degree to spheres with radii slightly smaller than $\sinh R$, so it does not require the hyperspherical radius to be attained.
All remainder terms must therefore vanish.
The resulting Hessian equation, together with the properness of the graph, forces the graph to be compact and its underlying domain
to be all of $X$.
The same equation determines the hyperbolic radial form of $g$, and Lemma \ref{lem:radial-closure} completes the identification of $(X,g)$ with the closed hyperbolic ball of radius $R$.

\subsection*{Organization of the paper.}
Section \ref{sec:preliminary} collects some preliminary results.
Section \ref{sec:obstruction} establishes the localized weighted obstruction and its quadratic form consequence.
Section \ref{sec:jang} constructs the proper Jang graph and proves the boundary estimate and the auxiliary rigidity lemmas.
Finally, Section \ref{sec:mainproofs} proves
Theorem \ref{thm:hyperbolic}, Corollary \ref{cor:hyper-map} and Corollary \ref{cor:euclidean-radius}.

\subsection*{Acknowledgements.} 
The authors would like to thank Mingxiang Li and Jintian Zhu for helpful discussions.

\subsection*{Disclosure on AI assistance.}
The authors used AI-assisted tools, principally ChatGPT. Initially, ChatGPT suggested a possible but convoluted approach limited to dimensions three through seven. Through multiple rounds of discussion with ChatGPT, incorporating ideas and arguments contributed by the authors, they finally extended the main theorem to all dimensions $n\ge3$. ChatGPT assisted with exploring intermediate arguments, performing model calculations, and examining proposed proofs for gaps. The authors directed these discussions, critically evaluated the suggestions, and independently verified the arguments and wrote the final manuscript. The authors take full responsibility for its content.

\section{Preliminaries}\label{sec:preliminary}
We record the spherical map estimates and the degree lemma used below. These facts are not claimed to be new, and some of them have related versions in \cite[Lemmas 3.1, 4.1 and 4.5]{WWXZ}.

For a possibly noncompact oriented $m$-manifold $Z$ without boundary, a smooth map $F_0:Z\to\SSS^m$ that equals $p_*$ outside a compact set has a \emph{proper degree over the punctured sphere}: it is the degree of the proper map
$Z\setminus F_0^{-1}(p_*)\rightarrow\SSS^m\setminus\{p_*\}.$ When $Z$ is compact, this agrees with its ordinary degree. For disconnected oriented boundaries, the degrees are added with the induced orientations.

Let $(S, g_S)$ be a closed oriented Riemannian manifold of dimension $m\ge 1$. The basic properties of hyperspherical radius used below are recalled from \cite[Section 2]{Baer}; see also \cite[Section 1]{GromovBoundary}. In particular, the radius defined in \eqref{eq:radius} is finite and positive, and for any constant $c>0$ we have 
$$
 \rad(S,c^2g_S)=c\rad(S,g_S).
$$
Moreover, for every smooth map $F:(S, g_S)\to\SSS^m(r)$ with $\Lip F\le 1$, 
\begin{equation}\label{eq:degree-volume}
 |\deg F|\Vol(\SSS^m(r))\le\Vol_{g_S}(S).
\end{equation}
For a linear map $A$ between finite-dimensional Euclidean spaces, the singular values of $A$ are the nonnegative square roots of the eigenvalues of the positive semi-definite operator $A^*A$.
We denote them by
$$
 \sigma_1(A)\ge \sigma_2(A)\ge\cdots\ge0.
$$
The operator norm of $A$ is
$$
 \|A\|_{\op}:=\sup_{|v|=1}|Av|=\sigma_1(A),
$$
and the trace norm of $A$ is
$$
 \|A\|_{\tr}:=\sum_i \sigma_i(A).
$$
We note that the trace norm is dual to the operator norm
$$\|A\|_{\tr}=\sup_{\|B\|_{\op}\le1}\bigl|\tr(B^*A)\bigr|.$$
Let $(Z_1, g_1)$ and $(Z_2, g_2)$ be smooth Riemannian manifolds. For a smooth map $F:Z_1\to Z_2$, set
\begin{equation}\label{eq:trace-norm}
 D_{F, g_1, g_2}=\|dF\|_{\tr, g_1, g_2}.
\end{equation}
If no confusion arises, we drop the subscripts $g_1$ and $g_2$, and simply write $D_F=D_{F, g_1, g_2}$.

Let $(S,g_S)$ be a closed oriented $m$-dimensional Riemannian manifold and $\Phi:S\to\SSS^m$ a smooth map. Fix antipodal poles $p_-,p_+$ and write, away from the poles,
$$
 \SSS^m\setminus\{p_-,p_+\}
 \cong (0,\pi)\times\SSS^{m-1},
 \qquad
 \Phi=(\theta,G),
$$
where $\theta$ is the distance from $p_-$ and $G$ is the longitude map. We have the following spherical map estimates.

\begin{lemma}\label{lem:spherical-tools}
If $Z\subset S$ is a smooth hypersurface with unit normal $\bar\eta$,
then
\begin{equation}\label{eq:polar-trace}
 D_\Phi\ge |\bar\eta\theta|+(\sin\theta)D_G,
 \qquad \text{ for } 0<\theta<\pi .
\end{equation}
Here $D_G$ is computed on $Z$ with its induced metric and round target metric. Moreover, for every $\eps>0$ there is a smooth degree-one map
$$
 P(\theta,z)=(a(\theta),z):\SSS^m\to\SSS^m
$$
which is constant near the two poles and satisfies $\Lip P\le1+\eps$.
Hence
\begin{equation}\label{eq:polar-preparation}
 D_{P\circ\Phi}\le(1+\eps)D_\Phi,
 \qquad
 \deg(P\circ\Phi)=\deg\Phi .
\end{equation}
\end{lemma}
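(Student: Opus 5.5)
Two claims need proof: the trace-norm inequality \eqref{eq:polar-trace} and the construction of $P$.

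\textbf{The inequality \eqref{eq:polar-trace}.} Work pointwise at $x\in Z$ with $0<\theta(x)<\pi$. In polar coordinates the round metric is $d\theta^2+\sin^2\theta\,g_{\SSS^{m-1}}$. Hence the target tangent space splits orthogonally as $\partial_\theta\oplus T\SSS^{m-1}$, with the second factor scaled by $\sin\theta$. The source tangent space splits orthogonally as $\R\bar\eta\oplus T_xZ$.

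Write $d\Phi$ as a block matrix with respect to orthonormal bases adapted to these splittings. The entry $(\bar\eta,\partial_\theta)$ equals $\bar\eta\theta$. The block $T_xZ\to T\SSS^{m-1}$, measured in the round metric of $\SSS^m$, equals $(\sin\theta)\,dG|_{T_xZ}$, where $dG$ is measured in the unit metric of $\SSS^{m-1}$.

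We use the duality $\|A\|_{\tr}=\sup_{\|B\|_{\op}\le1}|\tr(B^*A)|$. Choose $B$ block-diagonal: a $1\times1$ block equal to $\operatorname{sign}(\bar\eta\theta)$, and a block $B_2$ with $\|B_2\|_{\op}\le1$ attaining $\tr(B_2^*(\sin\theta)\,dG|_{T_xZ})=(\sin\theta)D_G$. Then $\|B\|_{\op}\le1$. The off-diagonal blocks of $d\Phi$ do not contribute to $\tr(B^*d\Phi)$. This gives
\[
D_\Phi\ge|\bar\eta\theta|+(\sin\theta)D_G .
\]
This step is routine linear algebra.

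\textbf{Construction of $P$.} Choose a smooth nondecreasing $a:[0,\pi]\to[0,\pi]$ with the following properties.
\begin{itemize}[nosep]
\item $a\equiv0$ near $0$ and $a\equiv\pi$ near $\pi$.
\item $0\le a'\le1+\eps$.
\item $\sin a(\theta)\le(1+\eps)\sin\theta$ for all $\theta$.
\end{itemize}
Set $P(\theta,z)=(a(\theta),z)$. Its differential has radial stretch $a'$ and tangential stretch $\sin a(\theta)/\sin\theta$, so $\Lip P\le1+\eps$. Since $P$ is constant near each pole, it is smooth there and collapses a neighborhood of each pole to that pole.

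To build $a$, take $a(\theta)=\lambda(\theta-\delta)$ on $[\delta,\pi-\delta]$ with $\lambda=\pi/(\pi-2\delta)$. Smooth it near $\delta$ and $\pi-\delta$ by a monotone convolution that keeps $a'\le\lambda(1+\text{small})$.

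The condition $\sin a\le(1+\eps)\sin\theta$ near the poles reduces to $a(\theta)\le(1+\eps)\theta$ for small $\theta$, with the symmetric condition near $\pi$, using $\sin a\le a$ and $\sin\theta\ge\theta(1-\theta^2/6)$. This holds because $a\le\lambda(\theta-\delta)_+$ plus a smoothing error, and $\lambda\to1$ as $\delta\to0$. Near the middle, $a(\theta)-\theta$ is $O(\delta)$, so $\sin a/\sin\theta=1+O(\delta)$ there. Taking $\delta$ small in terms of $\eps$ gives all three properties. \textbf{The main obstacle is this bookkeeping near the poles}, specifically the tangential ratio $\sin a/\sin\theta$.

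\textbf{Degree.} $P$ is homotopic to the identity through $a_t=(1-t)\theta+t\,a(\theta)$. Each $a_t$ fixes the poles and is monotone, so each $P_t$ is continuous on $\SSS^m$. Hence $\deg P=1$.

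\textbf{The estimate \eqref{eq:polar-preparation}.} We have $\deg(P\circ\Phi)=\deg P\cdot\deg\Phi=\deg\Phi$. Since $\|AB\|_{\tr}\le\|A\|_{\op}\|B\|_{\tr}$ and $\|dP\|_{\op}\le1+\eps$, it follows that $D_{P\circ\Phi}\le(1+\eps)D_\Phi$.
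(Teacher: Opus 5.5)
Your proof is correct and follows essentially the paper's route. The block-diagonal test operator in the trace/operator-norm duality gives \eqref{eq:polar-trace}, your mollified piecewise-linear $a$ is exactly of the paper's form $a=c\int_0^\theta k$ with $k$ a symmetric cutoff, and the degree and $\|AB\|_{\tr}\le\|A\|_{\op}\|B\|_{\tr}$ steps are identical. The one difference is your $\epsilon$-bookkeeping for $\sin a/\sin\theta$ near the poles, which is unnecessary: $a$ is convex on $[0,\pi/2]$ with $a(0)=0$ and $a(\pi/2)=\pi/2$, so $a(\theta)\le\theta$ there, and by symmetry $\sin a(\theta)\le\sin\theta$ on all of $[0,\pi]$ with no loss, which is the inequality the paper uses.
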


The estimate \eqref{eq:polar-trace} is the trace norm calculation in \cite[proof of Lemma 4.5]{WWXZ}.  The estimate \eqref{eq:polar-preparation} is the quantitative version of the pole preparation used in \cite[Lemma 3.1]{WWXZ}. For the reader's convenience, we sketch the proof here.

\begin{proof}
Fix a point $x\in Z$ with $0<\theta<\pi$.  Using the polar decomposition of the round metric,
$$
 g_{\SSS^m}=d\theta^2+\sin^2\theta\,g_{\SSS^{m-1}},
$$
split
$$
 T_xS=\mathbb R\bar\eta\oplus T_xZ,
 \qquad
 T_{\Phi(x)}\SSS^m
 =\mathbb R\partial_\theta\oplus\mathcal H,
$$
where $\mathcal H={\mathrm{Ker}}\, d\theta$ is the angular subspace. We use the same symbol $\theta$ for the polar angle on $\SSS^m$ and its pullback by $\Phi$. Set $e_0=\bar\eta$. Choose orthonormal bases
$e_1,\ldots,e_{m-1}$ of $T_xZ$ and
$E_1,\ldots,E_{m-1}$ of $\mathcal H$ so that $$\langle d\Phi|_{T_xZ}(e_i), E_i\rangle=\sigma_i,\qquad i=1,\cdots,m-1,$$ where $\sigma_1, \cdots, \sigma_{m-1}$ are singular values of $\pi_{\mathcal H}\circ d\Phi|_{T_xZ}$.  Choose a unit vector
$E_0\in\mathbb R\partial_\theta$
so that
$$
 \langle d\Phi(e_0),E_0\rangle
 =|\bar\eta\theta|.
$$
Define the linear map $Q:T_xS\to T_{\Phi(x)}\SSS^m$ by $Qe_i=E_i$, $0\le i\le m-1$. Then $\|Q\|_{\op}=1$.  Hence,
\begin{align}\label{eq:lem2-1}
 D_\Phi=\|d\Phi\|_{\tr}\ge \tr(Q^*d\Phi)
 =\sum_{i=0}^{m-1}\langle d\Phi(e_i),E_i\rangle
 =|\bar\eta\theta|
   +\|\pi_{\mathcal H}\circ d\Phi|_{T_xZ}\|_{\tn}.
\end{align}
Since $\Phi=(\theta,G)$, the polar expression for
the round metric shows that the singular values of
$\pi_{\mathcal H}\circ d\Phi_x|_{T_xZ}$ are
$\sin\theta(x)$ times those of $d(G|_Z)_x$, where
the latter are computed using the round metric
on $\SSS^{m-1}$. Consequently,
$$
 \|\pi_{\mathcal H}\circ d\Phi|_{T_xZ}\|_{\tr}
 =(\sin\theta)D_G.
$$
By \eqref{eq:lem2-1}, this proves \eqref{eq:polar-trace}.

For the second assertion, choose $\delta>0$ small and $k\in C^\infty([0,\pi])$ such that
$$
\left\{
\begin{aligned}
&0\le k\le1,\\
&k(\pi-s)=k(s),\\
&k(s)=0
&&\text{for }s\in[0,\delta]\cup[\pi-\delta,\pi],\\
&k'(s)\ge0
&&\text{for }s\in[0,\frac\pi2],\\
&k(s)=1
&&\text{for }s\in[2\delta,\pi-2\delta].
\end{aligned}
\right.
$$
Set
$$
 c=\frac{\pi}{\int_0^\pi k(s)\,ds},
 \qquad
 a(\theta)=c\int_0^\theta k(s)\,ds .
$$
By choosing $\delta$ sufficiently small, we may
assume
$1\le c\le1+\eps.$
Moreover,
$$
 a(0)=0,\qquad a(\pi)=\pi,\qquad a'(\theta)=ck(\theta)\le c,
$$
and $a$ is constant near both endpoints. We note that
$$
 \sin a(\theta)\le\sin\theta
 \qquad \text{ for } 0\le\theta\le\pi.
$$
Define
$P(\theta,z)=(a(\theta),z).$
Since $a$ is constant near $0$ and $\pi$, this extends smoothly across the two poles, and is constant in neighborhoods of them. For $0<\theta<\pi$, its radial
and angular singular values are
$a'(\theta)$ and $\frac{\sin a(\theta)}{\sin\theta}$, respectively. Since $dP=0$ near the poles,
$$
 \Lip P
 \le
 \max\left\{
       \sup_{0\le \theta\le\pi} a'(\theta),
       \sup_{0<\theta<\pi}\frac{\sin a(\theta)}{\sin\theta}
      \right\}
 \le c
 \le1+\eps.
$$
The map $P$ is homotopic to the identity, and hence $\deg P=1.$
Finally, we have
\begin{align*}
 &D_{P\circ\Phi}
 =\|dP\circ d\Phi\|_{\tn}
 \le
 \|dP\|_{\op}\|d\Phi\|_{\tn}
 \le(1+\eps)D_\Phi,\\
 &\deg(P\circ\Phi)
 =
 \deg P\,\deg\Phi
 =
 \deg\Phi.
\end{align*}
This proves \eqref{eq:polar-preparation}.
\end{proof}

The following degree descent statement is the topological content of \cite[Lemma 4.5]{WWXZ}. We record it in the finite perimeter form needed here. This lemma relates the degree of $\Phi$ to the signed degree of the longitude map $G$ on the regular contact boundary.

\begin{lemma}\label{lem:longitude-degree}
Let $(S, g_S)$ be a closed oriented $m$-dimensional Riemannian manifold, $m\ge2$, and $\Phi:S\to\SSS^m$ a smooth map. Fix antipodal poles $p_-,p_+$ and write $\Phi=(\theta,G)$ away from them. Let $U\subset S$ be an open set of finite perimeter such that
$$
 \Phi^{-1}(p_-)\Subset U,
 \qquad
 \overline U\cap\Phi^{-1}(p_+)=\varnothing.
$$
Assume that $\partial U\setminus K$ is a smooth embedded hypersurface
and that $U$ lies locally on one side of it, where
$K\subset\partial U$ is compact and its Hausdorff dimension satisfies
$\dim K<m-1$. Set $Z=\partial U\setminus K$ with its induced orientation. Then
$$
 |\deg(G: Z\to\SSS^{m-1})|=|\deg\Phi|,
$$
where the degree of $G$ is computed at a regular value outside $G(K)$. If $K=\varnothing$, no dimension assumption on $K$ is needed.
\end{lemma}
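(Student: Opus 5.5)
The plan is to compute $\deg\Phi$ by counting preimages of a regular value near the south pole. We then transfer that count to the longitude map $G$ on $Z=\partial U\setminus K$ by a Stokes-type argument. Precisely, we show that $\deg\Phi$ equals the degree of the restriction of $\Phi$ to $\overline U$, viewed as a map of pairs
$$(\overline U,\partial U)\to (\overline{B},\partial B),$$
where $B$ is a small neighbourhood of $p_-$ replaced by the complement of $p_+$. We then identify this relative degree with $\deg G$ through the polar retraction $(\SSS^m\setminus\{p_-,p_+\})\to \SSS^{m-1}$.

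Take first the case $K=\varnothing$, where $\partial U$ is a smooth closed hypersurface. Let $\omega$ be the normalized volume form of $\SSS^{m-1}$ and $\rho:\SSS^m\setminus\{p_+\}\to\SSS^m$ the radial collapse. Choose an $(m-1)$-form $\alpha$ on $\SSS^m\setminus\{p_-,p_+\}$ with $d\alpha=0$ and $\alpha=\pi^*\omega$, where $\pi$ is the longitude projection. Also fix a normalized $m$-form $\beta$ supported in a small ball around $p_-$, so that $\beta-d(f(\theta)\pi^*\omega)=0$ for a cutoff $f$ with $f=1$ near $\theta=\pi$ and $f=0$ near $p_-$; concretely $\beta=f'(\theta)\,d\theta\wedge\pi^*\omega$. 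Because $\Phi^{-1}(\supp\beta)\subset U$ when the support is chosen small (using $\Phi^{-1}(p_-)\Subset U$), we get
$$\deg\Phi=\int_S\Phi^*\beta=\int_U\Phi^*\beta .$$
On $U$ the form $\Phi^*(f\pi^*\omega)$ is smooth, since $f$ vanishes near $p_-$ and $\overline U$ avoids $p_+$. Stokes' theorem therefore gives
$$\int_U\Phi^*\beta=\int_{\partial U}\Phi^*(f\pi^*\omega).$$
We also need $\partial U$ to stay away from the region where $f\ne1$. For this, shrink the support of $\beta$ until $\theta\circ\Phi$ on $\partial U$ exceeds its radius, which is possible because $\partial U$ is compact and disjoint from $\Phi^{-1}(p_-)$. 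Then $f=1$ on $\Phi(\partial U)$, and the boundary integral equals $\int_{\partial U}G^*\omega=\deg(G|_Z)$ up to an orientation sign.

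For general $K$ this is the main obstacle, since $Z$ is noncompact and $\deg G$ must be taken at a regular value $y\notin G(K)$. Here $\theta\circ\Phi$ is still bounded below on the compact set $\partial U$, so the same choice of $\beta$ works. Since $U$ has finite perimeter, the divergence/Stokes theorem in its De Giorgi form applies to the smooth compactly supported form on a neighbourhood of $\overline U$. It gives
$$\int_U\Phi^*\beta=\int_{\partial^*U}\Phi^*\pi^*\omega .$$
Because $\dim K<m-1$, the set $K$ has $\mathcal H^{m-1}$-measure zero, so the reduced boundary coincides with $Z$ up to a null set. The orientation of $Z$ matches the measure-theoretic outer normal because $U$ lies locally on one side.

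It remains to show that $\int_Z G^*\omega$ equals the pointwise signed count $\#G^{-1}(y)$. We replace $\omega$ by forms $\omega_j$ of integral one concentrating at $y$. Since $G(K)$ is compact and $y\notin G(K)$, for $j$ large the pullback $G^*\omega_j$ is supported in a compact subset of $Z$ away from $K$. Each integral $\int_Z G^*\omega_j$ is the same, because $\omega_j-\omega=d\gamma_j$ and the exact parts integrate to zero by the same finite-perimeter Stokes argument applied to $d(\Phi^*f\pi^*\gamma_j)$. Hence it equals the local degree count at $y$, giving $|\deg G|=|\deg\Phi|$. The remaining care needed is to justify applying Stokes to $\pi^*\gamma_j$, which is smooth away from the poles; this is fine because $\overline U$ avoids $p_+$ and $f$ kills the region near $p_-$.
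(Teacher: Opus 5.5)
Your proposal is correct and is essentially the paper's proof: you pull back a form $d\bigl(f(\theta)\,\pi^*\omega\bigr)$ concentrated near $p_-$, localize it in $U$, apply the finite-perimeter Gauss--Green theorem, discard the $\mathcal H^{m-1}$-null set $K$, and read off the integral of $G^*\omega$ on $Z$. The only difference is that you first use the full volume form of $\SSS^{m-1}$ and then pass to forms $\omega_j$ concentrated at a regular value $y\notin G(K)$ through an exactness argument, while the paper chooses $\omega$ supported near such a regular value $q$ from the start, so its single boundary integral already equals $\deg_q G$ and that extra step is not needed.
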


\begin{proof}
Since $\partial U$ is compact and avoids
$\Phi^{-1}(p_-)\cup\Phi^{-1}(p_+)$, the longitude map $G$ is smooth
on a neighborhood of $\partial U$. Since $G$ is smooth near the compact set $K$ and $\dim K<m-1$, we have $\mathcal H^{m-1}(G(K))=0.$
Choose a regular value $q\in\SSS^{m-1}\setminus G(K)$ of $G|_Z$, and a neighborhood $V_0$ of $q$ such that $\overline{V_0}\cap G(K)=\varnothing.$
Since $\partial U$ is compact, 
$G^{-1}(\overline{V_0})\cap\partial U$
is compact and disjoint from $K$, hence is a compact subset of $Z$. The critical set of $G|_Z$ in this compact set has compact image, which does not contain $q$. We may choose a connected coordinate neighborhood $V$ of $q$ with $V\Subset V_0$ consisting entirely of regular values of $G|_Z$, with
${G|_Z}^{-1}(\overline V)$ compact in $Z$. Let $\omega$ be a smooth $(m-1)$-form supported in $V$ and normalized by
$\int_{\SSS^{m-1}}\omega=1.$
Then
\begin{equation}\label{eq:lem2-2-1}
 \int_Z G^*\omega=\deg_q G,
\end{equation}
where $\deg_qG$ denotes the signed degree computed at $q$.

We next construct a top degree form on $\SSS^m$ which computes $\deg\Phi$ and whose pullback is supported in $U$. Since $\Phi^{-1}(p_-)\Subset U$, by compactness there is a constant $0<b<\pi$ such that $\Phi^{-1}(\{\theta\le b\})\subset U$. Let
$$
 P_{\rm lon}:\SSS^m\setminus\{p_-,p_+\}\rightarrow\SSS^{m-1}
$$
be the longitude projection, so that $\Phi=(\theta,G)$ and
$P_{\rm lon}\circ\Phi=G$ away from the poles.  Choose a smooth function $\lambda: [0,\pi]\rightarrow [0,1]$ such that
$$
 \lambda=0 \quad\text{near }0,
 \qquad
 \lambda=1 \quad\text{on }[b,\pi],
$$
and set
$$
 \alpha=\lambda(\theta)\,P_{\rm lon}^*\omega
 \qquad\text{on }\SSS^m\setminus\{p_-,p_+\}.
$$
The form $\alpha$ extends smoothly across $p_-$, and
$$
 d\alpha
   =\lambda'(\theta)\,d\theta\wedge P_{\rm lon}^*\omega
$$
extends smoothly over all of $\SSS^m$, since it vanishes near both poles. Orient $\SSS^{m-1}$ so that
$d\theta\wedge \operatorname{vol}_{\SSS^{m-1}}$
agrees with the given orientation of $\SSS^m$. Then,
$$
 \int_{\SSS^m}d\alpha
 =
 \left(\int_0^\pi\lambda'(\theta)\,d\theta\right)
 \left(\int_{\SSS^{m-1}}\omega\right)
 =1.
$$
Hence the degree formula gives
$$
 \deg\Phi=\int_S\Phi^*(d\alpha).
$$
Moreover,
$$
 \supp\Phi^*(d\alpha)
 \subset\Phi^{-1}(\{\theta\le b\})\subset U,
$$
and therefore
\begin{equation}\label{eq:lem2-2-2}
 \deg\Phi=\int_U\Phi^*(d\alpha).
\end{equation}
It remains to move this integral to the boundary of $U$.  Since
$\overline U\cap\Phi^{-1}(p_+)=\varnothing$, we can choose
$\chi\in C^\infty(S)$ such that $\chi=1$ on a neighborhood of
$\overline U$ and $\chi=0$ near $\Phi^{-1}(p_+)$.  Define $\zeta=\chi\,\Phi^*\alpha$. It is a globally smooth $(m-1)$-form on $S$, and on a neighborhood of
$\overline U$, $d\zeta=\Phi^*(d\alpha).$
Therefore,
\begin{equation}\label{eq:lem2-2-3}
 \deg\Phi
 =\int_U d\zeta
 =\int_{\partial^*U}\zeta.
\end{equation}
Since $U$ lies locally on one side of
$Z=\partial U\setminus K$, we have
$
 \partial^*U\setminus K=Z
$
up to an $\mathcal H^{m-1}$-null set. By $\mathcal H^{m-1}(K)=0$, the exceptional set $K$ contributes
nothing to the boundary integral. On $Z$ we have $\theta>b$, hence $\lambda=1$ and $\chi=1$, so
$$
 \zeta|_Z
   =\Phi^*(P_{\rm lon}^*\omega)|_Z
   =G^*\omega.
$$
Consequently, by \eqref{eq:lem2-2-1} and \eqref{eq:lem2-2-3}
$$
 \deg\Phi
 =\int_ZG^*\omega
 =\deg_qG,
$$
up to the fixed sign determined by the orientation conventions.
Thus,
$$
 \bigl|\deg G\bigr|=|\deg\Phi|.
$$

If $K=\varnothing$, the same argument applies without any dimension
assumption on $K$.
\end{proof}

We finish this section with a lemma on proper functions.
\begin{lemma}\label{lem:exhaustion}
Let $(M,g)$ be a complete Riemannian manifold with smooth compact boundary. For every compact
set $K\subset M$ containing a boundary collar, there is a smooth proper function $\sigma\ge0$ with $|d\sigma|\le 1$ and $\sigma=0$ on a neighborhood of $K$.
\end{lemma}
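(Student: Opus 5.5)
The plan is to build $\sigma$ from the distance function to $K$ and then smooth it. Set $d_K(x)=\operatorname{dist}_g(x,K)$. Here $\operatorname{dist}_g$ is the length metric of $(M,g)$, computed with curves allowed to touch $\partial M$.

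\emph{Step 1: properties of $d_K$.} The function $d_K$ is $1$-Lipschitz. Since $M$ is complete, closed bounded sets are compact by the Hopf--Rinow theorem for length spaces. Hence $d_K$ is proper: $\{d_K\le t\}$ is closed and lies within distance $t$ of the compact set $K$, so it is compact.

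\emph{Step 2: a first modification.} Choose a compact set $K'$ with $K\subset\operatorname{int}K'$. For instance, take $K'=\{d_K\le 2\delta\}$ for some small $\delta>0$. Define
\[
\sigma_0=\max(d_K-2\delta,\,0).
\]
Then $\sigma_0$ is $1$-Lipschitz and proper, and it vanishes on $\{d_K<2\delta\}$, which is a neighborhood of $K$.

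\emph{Step 3: smoothing.} We need a smooth $\sigma$ with $|\sigma-\sigma_0|\le\delta/2$ and $|d\sigma|\le 1$. I would use the standard Greene--Wu type approximation: for every $\eta>0$, a Lipschitz function with constant $1$ admits smooth approximations with Lipschitz constant at most $1+\eta$ and uniformly small error. This is done by local convolution in charts glued with a partition of unity. Near the boundary, use half-space charts (or first extend $g$ slightly across $\partial M$, which is compact). Then divide by $1+\eta$ to restore $|d\sigma|\le1$.

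\emph{Step 4: zero near $K$.} To keep $\sigma=0$ near $K$, compose with a smooth nondecreasing $\psi:\R\to[0,\infty)$ with $\psi=0$ on $(-\infty,\delta]$, $0\le\psi'\le1$, and $\psi(s)\ge s-2\delta$. Set $\sigma=\psi(\tilde\sigma)$. On $\{d_K<\delta\}$ we have $\tilde\sigma\le\delta/2$, so $\sigma=0$ there. Properness is preserved because $\sigma\ge \sigma_0-3\delta$. The gradient bound is preserved because $\psi'\le1$, and $\sigma\ge0$.

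\emph{Main obstacle.} The only delicate point is the smoothing with the \emph{sharp} bound $|d\sigma|\le1$ rather than $1+\eta$. This is handled by rescaling: apply the approximation to $\sigma_0/(1+\eta)$, or simply divide at the end. The presence of boundary is harmless because the boundary is compact and collared. The assumption that $K$ contains a boundary collar is in fact not needed for this construction; it only ensures that $\sigma$ vanishes near $\partial M$.
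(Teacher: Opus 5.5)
Your proof is correct. It shares its last step with the paper's proof but builds the smooth exhaustion differently.

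\textbf{The paper's route.} The paper does no smoothing itself. It quotes \cite[Lemma 4.1]{WWXZ} for a smooth, proper, nonnegative $\varphi$ with $|d\varphi|\le1$ (and $\partial_\nu\varphi=0$ on $\partial M$). It then picks $A>\sup_K\varphi$ and sets $\sigma=\chi\circ\varphi$, where $\chi=0$ on $[0,A]$, $0\le\chi'\le1$ and $\chi\to\infty$. Vanishing near $K$ comes from $K\subset\{\varphi<A\}$.

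\textbf{Your route.} Your Step 4 is the same cutoff composition. The difference is that you construct the smooth $1$-Lipschitz exhaustion yourself, from $d_K$ via Greene--Wu smoothing. So vanishing near $K$ is built into $\sigma_0$ rather than read off a sublevel set.

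\textbf{Trade-off.} The citation buys brevity, plus a Neumann condition that is not needed here. Your route buys independence from \cite{WWXZ}, at the cost of invoking a smoothing theorem.

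\textbf{Role of the collar.} The collar hypothesis is exactly what makes your smoothing painless. Since $K$ contains a collar, $\sigma_0$ vanishes on a neighborhood of $\partial M$. So no half-space charts or extension of $g$ are needed: charts meeting $\partial M$ can simply carry the zero function in the partition-of-unity gluing.

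\textbf{A bookkeeping point in Steps 3--4.} After dividing by $1+\eta$, or approximating $\sigma_0/(1+\eta)$ instead, the global bound $|\tilde\sigma-\sigma_0|\le\delta/2$ no longer holds. The error grows like $\eta\,\sigma_0$, and $\sigma_0$ is unbounded. Your properness estimate should therefore read $\sigma\ge\sigma_0/(1+\eta)-3\delta$ rather than $\sigma\ge\sigma_0-3\delta$. This still gives properness. The vanishing of $\sigma$ on $\{d_K<2\delta\}$ is unaffected, because $\sigma_0=0$ there, so $\tilde\sigma\le\delta/2$ and $\psi(\tilde\sigma)=0$.
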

\begin{proof}
This follows from \cite[Lemma 4.1]{WWXZ}.  That lemma provides a smooth, proper, nonnegative function $\varphi$ on $M$ such that
$$
 |d\varphi|\le1,
 \qquad
 \partial_\nu \varphi=0\quad\text{on }\partial M,
$$
where $\nu$ is the outward unit normal.
Since $K$ is compact, choose $A>\sup_K \varphi$ and a smooth nondecreasing function $\chi: [0,\infty)\to[0,\infty)$ such that
$$
 \chi=0\quad\text{on }[0,A],\qquad
 0\le\chi'\le1,
$$
and $\chi(t)\to\infty$ as $t\to\infty$.
Set $\sigma=\chi\circ \varphi.$ It is smooth, nonnegative, and proper. Moreover, it satisfies
$$
 |d\sigma|=\chi'(\varphi)|d\varphi|\le1,
$$
and vanishes on the open set $\{\varphi<A\}$ containing $K$.
\end{proof}

\section{Localizing the weighted obstruction}\label{sec:obstruction}

Throughout this section, $(M^n,g)$ may be a noncompact Riemannian manifold,
while its smooth boundary $S=\partial M$ is compact.
Here $g$ denotes the metric on $M$, and no relation is assumed
between this metric and the one denoted by the same symbol
in $(X,g)$.
Unsubscripted covariant derivatives, gradients, and
divergences are taken with respect to $g$.
Completeness is understood with respect to the intrinsic Riemannian
length metric on $M$, with its boundary included. In particular,
functions in $C_c^\infty(M)$ may have nonzero boundary trace.

Let $\nu$ be the outward unit normal of $S$. For $f\in C^\infty(M)$,
the weighted mean curvature of $S$ with respect to $e^{-f}$ is
\begin{equation}\label{eq:weighted-mean}
 H_f=H_S-\nu f.
\end{equation}

Set
\begin{equation}\label{eq:weighted-notation}
 a_n:=\frac{n}{n+1},
 \qquad
 S_n(g,f)
 :=\Sc_g+2\Delta_g f-a_n|df|_g^2,
\end{equation}
where $\Delta_g=\diver_g\nabla_g$. 
The purpose of this section is to convert the pointwise inequalities
$$
 S_n(g,f)>0 \quad\text{on }M,
 \qquad
 H_f>D_\Phi \quad\text{on }S,
$$
whose interior positivity need not be uniform, into a compact capillary interface on which the associated quadratic form has uniform positive interior and boundary lower bounds. At the same time, the nonzero degree of $\Phi$ is transferred to the longitude map on the contact boundary. These
properties allow the dimension descent argument of \cite{WWXZ} to be applied.

\begin{theorem}\label{thm:obstruction}
Let $n\ge3$, and let $(M^n,g)$ be a smooth, oriented, connected and complete Riemannian manifold  with nonempty smooth compact boundary $S$. There are no smooth functions $f\in C^\infty(M)$ and smooth maps
$\Phi:S\to\SSS^{n-1}$ such that
\begin{equation}\label{eq:obstruction-data}
 S_n(g,f)>0\quad\text{on }M,
 \qquad
 H_f>D_\Phi\quad\text{on }S, \qquad \deg\Phi\ne0.
\end{equation}
The first inequality is pointwise, and no uniform positive lower bound is assumed.
\end{theorem}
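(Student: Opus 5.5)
We argue by contradiction, assuming $f$ and $\Phi$ as in \eqref{eq:obstruction-data} exist, and reduce to the compact situation treated by the dimension descent of \cite{WWXZ}. The first step is a preparation of the data. By Lemma~\ref{lem:spherical-tools}, we replace $\Phi$ by $P\circ\Phi$, which is constant near two antipodal poles $p_\pm$ and has the same degree. Since $S$ is compact and $H_f-D_\Phi$ is continuous and positive, the strict inequality $H_f>(1+\eps)D_\Phi\ge D_{P\circ\Phi}$ survives for small $\eps$, and it does so with a uniform margin $H_f-D_{P\circ\Phi}\ge 2\delta_0>0$ on $S$. Interior positivity of $S_n(g,f)$ is only pointwise. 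We therefore fix a compact set $K\subset M$ containing a boundary collar, and take the proper function $\sigma$ from Lemma~\ref{lem:exhaustion}, which vanishes near $K$ and satisfies $|d\sigma|\le1$.

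The second step is the localization. We study a weighted capillary $\mu$-bubble functional for Caccioppoli sets $\Omega\subset M$ with $\Phi^{-1}(p_-)\Subset\Omega\cap S$ and $\overline{\Omega}\cap\Phi^{-1}(p_+)=\varnothing$:
\[
 \mathcal A(\Omega)=\int_{\partial^*\Omega\cap \mathring M}e^{-f}\,d\mathcal H^{n-1}
 -\int_{\partial\Omega\cap S}e^{-f}\cos\bigl(\theta\circ\Phi\bigr)\,d\mathcal H^{n-1}
 -\int_{\Omega} h\,e^{-f}\,d\mathcal H^{n},
\]
with the capillary angle prescribed by the polar angle as in \cite{WWXZ}. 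The forcing term $h$ is chosen to equal $\eta(\sigma)$, where $\eta$ is increasing and tends to $+\infty$ sufficiently fast along the ends. It is also chosen so that $|dh|$ and $h^2$ remain dominated, in the stability inequality, by the positive but possibly decaying quantity $S_n(g,f)$ on the region where $h$ is small. A barrier argument then yields a minimizer that is bounded, i.e.\ contained in a compact region: on the level sets $\{\sigma=t\}$ with $t$ large, including $\{\sigma\ge t\}$ strictly decreases $\mathcal A$, since $h\to\infty$ while $|d\sigma|\le1$. A version with $h\to-\infty$ would instead push the minimizer away from the ends. We must choose the sign convention so that the minimizer is trapped; I would try $\Omega$ decreasing, i.e.\ $h\to-\infty$ outside $K$.

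The third step extracts the compact obstruction. On the compact capillary interface $\Sigma=\partial\Omega\cap\mathring M$, the first variation and the weighted second variation in the Bi--Zhu form \cite[Section 2]{BiZhu} give a quadratic form
\[
\int_\Sigma |\nabla\psi|^2+\tfrac12\bigl(S_n(g,f)-|dh|-h^2\bigr)\psi^2+\int_{\partial\Sigma}\bigl(H_f-D_\Phi\bigr)\psi^2\ge0 ,
\]
written up to the usual weighted conformal normalization. Compactness of $\Sigma$ turns the pointwise positivity into a uniform positive lower bound on $\Sigma$. The boundary term is controlled via \eqref{eq:polar-trace} applied with $Z=\partial\Sigma\subset S$; this yields the inequality $H_f$ greater than the trace norm of $G$ on $\partial\Sigma$ after reweighting, which is exactly the dimension-reduced hypothesis. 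Lemma~\ref{lem:longitude-degree}, applied to $U=\Omega\cap S$ with the singular set $K$ of codimension at least $7$ on the boundary, gives $|\deg G|=|\deg\Phi|\ne0$. The resulting compact interface, carrying weighted scalar--mean data and a nonzero-degree map to $\SSS^{n-2}$, is precisely the input of the iterated descent with conformal blow-up of singularities in \cite{WWXZ}. That descent terminates in a contradiction in dimension two, via Gauss--Bonnet against a degree-nonzero map to $\SSS^1$, or in the base case of \cite{WWXZ}.

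The main obstacle is the second step, namely choosing $h$ so that it simultaneously confines the minimizer to a compact set and keeps $S_n(g,f)-|dh|-h^2>0$ on that set, even though $S_n$ may decay to zero. I would first try a two-parameter construction. On $\{\sigma\le T\}$, take $h$ to be identically $0$, or tiny compared with $\min_{\{\sigma\le T+1\}}S_n$. Beyond that region, let $h$ blow up quickly in a thin shell, so that the minimizer stays inside $\{\sigma\le T\}$, where the forcing terms vanish, and the shell never meets $\Sigma$. Finally, I would verify that the minimizer exists, i.e.\ that the admissible class is nonempty with $\mathcal A$ bounded below, by comparing with a small neighborhood of $\Phi^{-1}(p_-)$.
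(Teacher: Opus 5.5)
\textbf{Verdict: there is a genuine gap in your localization step.} Your outline matches the paper's: pole preparation, a forced weighted capillary functional, stability on a compact interface, then the descent of WWXZ. But the step you correctly flag as the crux does not work as proposed, because of a sign error in the stability potential.

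With $H_Y=h+Nf$ from the first variation and $a_n=n/(n+1)$, the trace-free decomposition gives $|A_Y|^2+H_Y^2-2H_Y\,Nf+a_n(Nf)^2\ge\frac12h^2$; this is \eqref{eq:trace-square}. So the interior coefficient is $\frac12\bigl(S_n(g,f)+2Nh+\frac12h^2\bigr)$, not $\frac12(S_n-|dh|-h^2)$. The forcing term helps through $h^2$, and only $Nh$ hurts.

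Believing $h^2$ harmful led you to keep $h\equiv0$ on $\{\sigma\le T\}$, drop it steeply in a thin shell, and assert that the shell never meets $\Sigma$. Nothing supports that assertion.
\begin{itemize}
\item The only mechanism that expels $E$ from a region is calibration by $V=\nabla\sigma/|\nabla\sigma|$. Deleting $W_s=E\cap\{\sigma>s\}$ changes the energy by at most $\int_{W_s}e^{-f}(h-\diver_fV)$, which is negative only where $h<-|\diver_fV|$.
\item This requires a bound on $\diver_fV$ over a compact band of regular values; $|d\sigma|\le1$ says nothing about it.
\item Where $h$ lies between $0$ and that level, the interface can sit. If the weighted area of cross sections decreases along an end, it will be pushed exactly there.
\item In the inner part of a steep transition, $|dh|$ is large while $h^2$ is still small and $S_n$ may be arbitrarily small. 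There $Nh$ is typically negative ($N$ points toward increasing $\sigma$, $h$ decreases), so the potential goes negative and the uniform interior margin the descent needs is lost.
\item Your first choice, $h\to+\infty$, argues for the wrong conclusion: including $\{\sigma\ge t\}$ then lowers the energy, and the minimizer escapes to infinity.
\end{itemize}

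\textbf{The missing idea is a Riccati-type profile.} It uses the good $\frac12h^2$ term to pay for $|dh|$ wherever the interface lands, so you never need to control where it sits. The paper takes $h=-\eta_T(\sigma)$ with $\eta_T(s)=8\zeta(s)/(T-s)$, where $\zeta$ is a cutoff rising on $[1,2]$. Then $\eta_T'=\eta_T^2/8+e_T$, with $e_T\ge0$ supported in $[1,2]$ and $\|e_T\|_\infty\to0$ as $T\to\infty$. Together with $|d\sigma|\le1$ this gives $S_n+\frac12h^2-2|dh|\ge S_n+\frac14h^2-2e_T(\sigma)>0$ on all of $\{\sigma<T\}$. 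It uses only $\min_{\sigma^{-1}([1,2])}S_n>0$. So the transition is long, not thin.

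Confinement then comes from $\eta_T\to\infty$ as $\sigma\uparrow T$. On a band $\{s_0<\sigma<s_1\}$ of regular values just below $T$, one has $h<-|\diver_fV|-1$. Minimizing in a compact container with an outer constraint on that band, the calibration cut shows the constraint is inactive.

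\textbf{Smaller points.}
\begin{itemize}
\item Your admissible conditions on $\Phi^{-1}(p_\pm)$ are not closed under $L^1$ convergence. You also must show such constraints are inactive and that the contact angle stays away from $0$ and $\pi$ on $Y\cap S$; otherwise the capillary regularity and the stability formula, which divides by $\sin\theta$, break down. The paper handles this with the tilted collars $V_\pm$ and unit fields $Q_\pm$ satisfying $\diver_fQ_\pm\le-\kappa/2$ and the wall sign conditions.
\item The boundary singular set of the capillary interface is only known to have dimension at most $n-5$, not codimension $7$. This still suffices for Lemma~\ref{lem:longitude-degree}.
\end{itemize}
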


The proof of Theorem \ref{thm:obstruction} is deferred to the end of this section. 

For a smooth function $h$ on $M$ and a smooth function $\beta$ on $S$, with $|\beta|\le1$, define the functional
\begin{equation}\label{eq:energy}
 \cF_{f,h,\beta}(E)
 =\int_{\partial^*E\cap M^\circ}e^{-f}
  -\int_S\beta\,\chi_Ee^{-f}
  -\int_Mh\,\chi_Ee^{-f},
\end{equation}
where $E\subset M$ is a set of finite perimeter
for which the displayed integrals are finite. We denote by $\partial^*E$ the reduced boundary of $E$, and by $\chi_E$ its characteristic function. All integrals are taken with respect to the induced Riemannian volume or area measure.  In the boundary integral over $S=\partial M$, $\chi_E$ is understood in the sense of its $BV$ trace on $S$.
On a noncompact manifold, local minimization is understood in terms of relative energy for compactly supported variations. The constrained minimization below takes place in a compact container, where all terms are finite.
The function denoted by $f$ in \cite{WWXZ} corresponds to $-f$ in our notation. Set $\rho=e^{-f}$, $\Psi=h$, and $\mu_\partial=\beta$.
Then \eqref{eq:energy} is the functional in \cite[(4.3)]{WWXZ}.

Let $E$ be a locally minimizing set for \eqref{eq:energy}, and let $Y=\overline{\partial^*E\cap M^\circ}$ be the closure of its free interface. Denote its regular part by
$Y_{\rm reg}$ and its induced metric by $g_Y$. For the variation formulas below, set $\beta=\cos\theta$,
where $\theta$ is smooth on a neighborhood of
$\partial Y_{\rm reg}$ in $S$ and satisfies
$$0<\theta<\pi \qquad\text{ on } \partial Y_{\rm reg}.$$
Assume that $Y_{\rm reg}$ is two-sided, and let $N$ be its unit normal pointing out of $E$. Along $\partial Y_{\rm reg}$, let $\eta$ be the outward unit conormal in $Y_{\rm reg}$, and let $\bar\eta$ be the outward unit normal in $S$ to the region where the $BV$ trace of $\chi_E$ equals one. Recall that $\nu$ denotes the outward unit normal of $S=\partial M$.

Let $A_Y$ and $H_Y$ denote the second fundamental form and mean
curvature of $Y_{\rm reg}$ with respect to $N$, namely,
$$
 A_Y(V,Z)=g(\nabla_VN,Z),
 \qquad
 H_Y=\tr_{g_Y}A_Y.
$$
We use the first and second variation formulas from
\cite[Lemma 4.4]{WWXZ}. The first variation of \eqref{eq:energy}
gives the prescribed weighted mean curvature equation and the
capillary boundary condition:
\begin{align}
 H_Y-Nf&=h
 \qquad\text{on }Y_{\rm reg},\label{eq:first-1}\\
 \cos\theta&=-\langle N,\nu\rangle
 \qquad\text{on }\partial Y_{\rm reg}.\label{eq:first-2}
\end{align}
Thus the prescribed angle $\theta$ is the angle between $N$ and
$-\nu$, or, equivalently, between $\eta$ and $\bar\eta$.
With these choices of normals, we have
\begin{equation}\label{eq:capillary-normals}
 N=(\sin\theta)\bar\eta-(\cos\theta)\nu,
 \qquad
 \eta=(\cos\theta)\bar\eta+(\sin\theta)\nu.
\end{equation}

Set $f_Y=f|_{Y_{\rm reg}}$. Along $\partial Y_{\rm reg}$, let
$H_{\partial Y}$ be the mean curvature of
$\partial Y_{\rm reg}\subset Y_{\rm reg}$ with respect to $\eta$,
and define
\begin{equation}\label{eq:slice-mean}
 H_{\partial Y}^{f_Y}
 :=H_{\partial Y}-\eta(f_Y).
\end{equation}
The second variation gives the stability inequality
\begin{equation}\label{eq:stability}
 \int_{Y_{\rm reg}}e^{-f_Y}
 \bigl(|dv|_{g_Y}^2-P_fv^2\bigr)
 +\int_{\partial Y_{\rm reg}}e^{-f_Y}B_fv^2
 \ge0
\end{equation}
for every $v\in C_c^\infty(Y_{\rm reg})$, where
\begin{align}
 P_f
 &=\Ric_g(N,N)+\Hess_g f(N,N)+|A_Y|_{g_Y}^2+Nh,
 \label{eq:Pf}\\
 B_f
 &=H_{\partial Y}^{f_Y}
   -\frac{H_f+h\cos\theta-\bar\eta\theta}
          {\sin\theta}.
 \label{eq:Bf}
\end{align}
Here $Y_{\rm reg}$ is regarded as a manifold with boundary
$\partial Y_{\rm reg}$. Thus a function
$v\in C_c^\infty(Y_{\rm reg})$ may have nonzero boundary trace, while its compact support stays away from the singular set.

Assume that the boundary map $\Phi$ has the polar representation $\Phi=(\theta,G)$ on a neighborhood of $\overline{\partial Y_{\rm reg}}$ in $S$, where $\theta$ is the same prescribed angle as in $\beta=\cos\theta$ and $G$
is the longitude map. For $v\in C_c^\infty(Y_{\rm reg})$, define
\begin{equation}\label{eq:slice-form}
\begin{split}
 \Q_Y(v)={}&
 \int_{Y_{\rm reg}}e^{-f_Y}
 \left[
 |dv|_{g_Y}^2
 +\frac12
 \left(
 \Sc_{g_Y}+2\Delta_{g_Y}f_Y
 -a_n|df_Y|_{g_Y}^2
 \right)v^2
 \right]\\
 &+\int_{\partial Y_{\rm reg}}e^{-f_Y}
 \bigl(H_{\partial Y}^{f_Y}-D_G\bigr)v^2.
\end{split}
\end{equation}
Here $D_G$ is the trace norm of $d(G|_{\partial Y_{\rm reg}})$, using the induced metric on $\partial Y_{\rm reg}$ and the round metric on $\SSS^{n-2}$.

The next inequality is a quantitative form of the capillary descent calculation in \cite[Section 4.2]{WWXZ}, specialized to the coefficient $a_n=n/(n+1)$ and arranged for the localization argument below.

\begin{lemma}\label{lem:variation}
Let $E$ be a locally minimizing set for \eqref{eq:energy}, and $Y=\overline{\partial^*E\cap M^\circ}$ be the capillary interface with regular part $Y_{\rm reg}$. Suppose that $Y$ and $Y_{\rm reg}$ satisfy the assumptions above. Then, for every $v\in C_c^\infty(Y_{\rm reg})$,
\begin{equation}\label{eq:descent-margin}
\begin{split}
 \Q_Y(v)\ge{}&
 \frac12\int_{Y_{\rm reg}}e^{-f_Y}
 \left(S_n(g,f)+2Nh+\frac12h^2\right)v^2\\
 &+\int_{\partial Y_{\rm reg}}e^{-f_Y}
 \frac{H_f-D_\Phi+h\cos\theta}{\sin\theta}\,v^2 .
\end{split}
\end{equation}
\end{lemma}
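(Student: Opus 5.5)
The plan is to start from the stability inequality \eqref{eq:stability} and rewrite the interior potential $P_f$ with a Schoen--Yau type rearrangement using the Gauss equation. The boundary term $B_f$ will be handled with the trace estimate \eqref{eq:polar-trace}.

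\emph{Interior term.} On $Y_{\rm reg}$ the Gauss equation gives
\[
 \Ric_g(N,N)+|A_Y|^2=\tfrac12\bigl(\Sc_g-\Sc_{g_Y}+H_Y^2+|A_Y|^2\bigr).
\]
For the weight, I would use
\[
 \Hess_gf(N,N)=\Delta_gf-\Delta_{g_Y}f_Y-H_Y\,Nf,
\]
which is the standard splitting of the Laplacian along a hypersurface with our sign convention for $H_Y$. Substituting both identities into $P_f$ gives
\[
 -P_f=\tfrac12(\Sc_{g_Y}+2\Delta_{g_Y}f_Y)-\tfrac12(\Sc_g+2\Delta_g f)
 -\tfrac12(H_Y^2+|A_Y|^2)+H_YNf-Nh .
\]
Adding $\tfrac12(\Sc_{g_Y}+2\Delta_{g_Y}f_Y-a_n|df_Y|^2)$ to $|dv|^2$ in $\Q_Y$ and comparing with the stability integrand, I need the pointwise inequality
\[
 \tfrac12 a_n|df|^2-\tfrac12 a_n|df_Y|^2+\tfrac12(H_Y^2+|A_Y|^2)-H_YNf+Nh
 \ \ge\ Nh+\tfrac14 h^2,
\]
together with the extra $Nh$ that appears in the claimed margin.

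Now set $|df|^2-|df_Y|^2=(Nf)^2$, $|A_Y|^2\ge H_Y^2/(n-1)$ and $H_Y=h+Nf$ from \eqref{eq:first-1}. After rearranging, the inequality becomes
\[
 \tfrac12\Bigl(\tfrac{n}{n-1}\Bigr)(h+Nf)^2-(h+Nf)Nf+\tfrac12 a_n(Nf)^2 \ \ge\ \tfrac14h^2 .
\]
This is a quadratic form in the variables $(h,Nf)$. The specific constant $a_n=n/(n+1)$ is chosen exactly so that this form is nonnegative. Checking its discriminant is routine and is the \emph{main obstacle}: it is the step that fixes the constants $a_n$ and $\tfrac14$, and I expect it to be the place where the computation has to be done with care. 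With those constants the form should be positive semidefinite, so the bookkeeping closes. The $2Nh$ in the margin comes from comparing $-P_f\ni -Nh$ with the $\tfrac12\cdot2Nh$ term on the right-hand side; if the signs do not match directly, I would move the $Nh$ contributions around within the same inequality.

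\emph{Boundary term.} Stability contributes $B_fv^2$, and $\Q_Y$ contains $(H^{f_Y}_{\partial Y}-D_G)v^2$. Their difference is
\[
 H^{f_Y}_{\partial Y}-D_G-B_f=\frac{H_f+h\cos\theta-\bar\eta\theta}{\sin\theta}-D_G .
\]
By \eqref{eq:polar-trace} applied with $Z=\partial Y_{\rm reg}\subset S$, we have $\bar\eta\theta+\sin\theta\, D_G\le|\bar\eta\theta|+\sin\theta\,D_G\le D_\Phi$. Since $\sin\theta>0$, it follows that
\[
 \frac{-\bar\eta\theta}{\sin\theta}-D_G\ \ge\ -\frac{D_\Phi}{\sin\theta},
\]
which yields exactly the boundary margin in \eqref{eq:descent-margin}.

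Putting the two pieces together, $\Q_Y(v)$ equals the stability expression, which is nonnegative, plus the interior and boundary remainders, each bounded below by the stated margins. This proves \eqref{eq:descent-margin}.
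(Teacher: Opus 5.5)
Your proposal is correct and follows the paper's proof essentially step by step: you add and subtract the stability coefficients $P_f$ and $B_f$, rewrite the interior coefficient using the Gauss equation and the hypersurface Laplacian splitting, insert $H_Y=h+Nf$ and $|A_Y|^2\ge H_Y^2/(n-1)$, and treat the boundary with the polar trace estimate \eqref{eq:polar-trace}. The discriminant check you deferred does close, with zero discriminant. Your left-hand side minus $\tfrac14h^2$ equals $\frac{1}{n^2-1}\bigl(Nf+\tfrac{n+1}{2}h\bigr)^2$, which is half of the paper's completed square $b_n\bigl(u+\tfrac{h}{(n-1)b_n}\bigr)^2$ with $b_n=\tfrac{2}{n^2-1}$; $a_n=n/(n+1)$ is exactly the threshold for this, and the $Nh$ terms match directly without rearrangement.
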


\begin{proof}
Set $u=Nf.$ Adding and subtracting the stability coefficients
$P_f$ and $B_f$ in the definition of $\Q_Y$, and using
\eqref{eq:stability}, we obtain
$$
\begin{aligned}
 \Q_Y(v)\ge{}&
 \int_{Y_{\rm reg}}e^{-f_Y}
 \left[
 P_f+\frac12
 \left(
 \Sc_{g_Y}+2\Delta_{g_Y}f_Y-a_n|df_Y|_{g_Y}^2
 \right)
 \right]v^2\\
 &+\int_{\partial Y_{\rm reg}}e^{-f_Y}
 \left(H_{\partial Y}^{f_Y}-D_G-B_f\right)v^2.
\end{aligned}
$$
We estimate the two coefficients on the right separately.

For the interior term, the Gauss equation 
$$
 \Sc_{g_Y}
 =\Sc_g-2\Ric_g(N,N)+H_Y^2-|A_Y|_{g_Y}^2
$$
and
$$
 \Delta_{g_Y}f_Y
 =
 \Delta_gf-\Hess_gf(N,N)-H_YNf
$$
give
\begin{align}
 2P_f
 +\Sc_{g_Y}+2\Delta_{g_Y}f_Y
      -a_n|df_Y|_{g_Y}^2 
 =
 S_n(g,f)
 +|A_Y|^2+H_Y^2-2H_Yu+a_nu^2+2Nh.
 \label{eq:gauss-weighted}
\end{align}
By \eqref{eq:first-1},
\begin{equation}\label{eq:lem-3-2-1}
 H_Y=h+u.
\end{equation}
Note $\dim Y=n-1$. We write
$$
 A_Y^\circ
 :=
 A_Y-\frac{H_Y}{n-1}g_Y
$$
and set
$$
 b_n
 :=
 a_n-1+\frac1{n-1}
 =
 \frac{2}{n^2-1}>0.
$$
Using
$$
 |A_Y|^2
 =
 |A_Y^\circ|^2+\frac{H_Y^2}{n-1},
$$
and \eqref{eq:lem-3-2-1}, we have
\begin{equation}\label{eq:trace-square}
 |A_Y|^2+H_Y^2-2H_Yu+a_nu^2
 =
 |A_Y^\circ|^2
 +b_n\left(
      u+\frac{h}{(n-1)b_n}
     \right)^2
 +\frac12h^2\ge\frac12 h^2.
\end{equation}
Hence, by \eqref{eq:gauss-weighted},
\begin{equation}\label{eq:lem-interior}
 2P_f
 +\Sc_{g_Y}+2\Delta_{g_Y}f_Y-a_n|df_Y|_{g_Y}^2
 \ge
 S_n(g,f)+2Nh+\frac12h^2.
\end{equation}

For the boundary term, by \eqref{eq:Bf},
$$
 H_{\partial Y}^{f_Y}-D_G-B_f
 =
 \frac{H_f+h\cos\theta-\bar\eta\theta}
      {\sin\theta}
 -D_G.
$$
By the polar trace estimate \eqref{eq:polar-trace},
$$
 D_\Phi
 \ge
 |\bar\eta\theta|+(\sin\theta)D_G,
$$
and therefore
\begin{equation}\label{eq:lem-boundary}
 H_{\partial Y}^{f_Y}-D_G-B_f
 \ge
 \frac{H_f-D_\Phi+h\cos\theta}{\sin\theta}.
\end{equation}
Combining \eqref{eq:lem-interior} and \eqref{eq:lem-boundary} proves \eqref{eq:descent-margin}.
\end{proof}

\begin{lemma}\label{lem:shielding}
Let $n\ge3$, and let $(M^n,g)$ be a smooth, oriented, connected and complete Riemannian manifold  with nonempty smooth compact boundary $S$. Suppose that $f\in C^\infty(M)$ and $\Phi:S\to\SSS^{n-1}$ is a smooth map satisfying \eqref{eq:obstruction-data}.
Let $p_-$ and $p_+$ be antipodal regular values of $\Phi$.
Then there exist a degree-one map $P$ as in
Lemma~\ref{lem:spherical-tools} that is constant near
$p_-$ and $p_+$, a set $E\subset M$ of finite perimeter,
and a function $h\in C^\infty(M)$ vanishing on a neighborhood of $S$, such that the following properties hold. Here $\theta$ and $G$
denote the polar angle and longitude of $P\circ\Phi$, respectively.
\begin{enumerate}[label=\textup{(\roman*)},leftmargin=*]

\item\label{itemi}
The set $E$ minimizes $\cF_{f,h,\beta}$ in the constrained class used in its construction, with $\beta=\cos\theta$. Moreover, the auxiliary constraints are inactive near $Y:=\overline{\partial^*E\cap M^\circ}$. 
\item\label{itemii}
$Y$ is compact, and $Y_{\rm reg}$ is two-sided and oriented by the unit normal pointing out of $E$. Moreover, for some $\theta_0>0$,
$$
 \theta_0\le \theta\le \pi-\theta_0
 \qquad\text{on }\overline{\partial Y_{\rm reg}}.
$$

\item\label{itemiii}
The longitude map $G:\partial Y_{\rm reg}\rightarrow\SSS^{n-2}$
extends smoothly to a neighborhood of
$\overline{\partial Y_{\rm reg}}$ in $S$, and has nonzero degree in the sense of
Lemma \ref{lem:longitude-degree}, with $|\deg G|=|\deg\Phi|\ne0$.

\item\label{itemiv}
Let $g_Y$ be the induced metric on $Y_{\rm reg}$, and $f_Y:=f|_{Y_{\rm reg}}$. There exist constants $c_0,c_1>0$ such that
\begin{equation}\label{eq:slice-margins}
 \Q_Y(v)\ge
 c_0\int_{Y_{\rm reg}}e^{-f_Y}v^2
 +
 c_1\int_{\partial Y_{\rm reg}}e^{-f_Y}v^2
\end{equation}
for every $v\in C_c^\infty(Y_{\rm reg})$.

\end{enumerate}
\end{lemma}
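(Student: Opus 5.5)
First, a quantitative margin. Since $S$ is compact and $H_f>D_\Phi$ there, there is $\delta>0$ with $H_f-D_\Phi\ge2\delta$ on $S$. Fix $\eps>0$ small, and let $P$ be the map from Lemma~\ref{lem:spherical-tools}, constant near $p_\pm$. By \eqref{eq:polar-preparation},
\[
 H_f-D_{P\circ\Phi}\ge H_f-(1+\eps)D_\Phi\ge\delta
\]
for $\eps$ small. We then replace $\Phi$ by $P\circ\Phi$. This map has the same degree, and $\theta$ is locally constant near $\Phi^{-1}(p_\pm)$, so $\theta$ and $G$ are smooth off $\Phi^{-1}(p_\pm)$.

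Next, the forcing term $h$. Choose a compact set $K$ containing a collar of $S$. Take $\sigma$ from Lemma~\ref{lem:exhaustion}, and set $h=-\psi(\sigma)$ with $\psi\ge0$ nondecreasing, $\psi=0$ near $0$, and $\psi\to\infty$. Then $h=0$ near $S$, so the boundary coefficient in \eqref{eq:descent-margin} is at least $\delta/\sin\theta\ge\delta$. For the interior coefficient we compute
\[
 2Nh+\tfrac12h^2=-2\psi'(\sigma)N\sigma+\tfrac12\psi^2\ge\tfrac12\psi^2-2\psi'.
\]
Since $S_n(g,f)>0$ only pointwise, we choose $\psi$ growing slowly where it first turns on, so that $2\psi'<\tfrac12 S_n(g,f)$ on the compact region where $\psi$ is small. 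Further out, $\psi$ should grow fast enough that $\tfrac12\psi^2\ge2\psi'+1$. A Riccati-type choice handles this: on each annulus $\{k\le\sigma\le k+1\}$ the infimum of $S_n(g,f)$ is positive, and we take $\psi$ solving $\psi'\le\min(\tfrac18\inf S_n,\tfrac14\psi^2)$ while still diverging. Finally, on the region where $h$ is large and negative, the term $-\int h\chi_E$ penalizes including that region in $E$. Comparing with $E\setminus\{\sigma\ge T\}$, whose energy change is at most $\int_{\{\sigma=T\}}e^{-f}$ minus a large volume term, confines a minimizer to $\{\sigma\le T\}$. I expect to need a finite container $\{\sigma\le T_0\}$ and the observation that, for a suitable $T<T_0$, the interface avoids $\partial\{\sigma\le T_0\}$, so the constraint is inactive. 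This gives (i) and the compactness in (ii).

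For the remaining topological constraints, we impose that $E$ contains a fixed small neighborhood of $\Phi^{-1}(p_-)$ in $S$ and its trace avoids a neighborhood of $\Phi^{-1}(p_+)$, with $\theta$ constant there. Since $\beta=\cos\theta=\pm1$ near the poles, the capillary contact cannot approach these neighborhoods. The argument is the one in \cite[Lemma~3.1]{WWXZ}: there the obstacle is shown to be inactive because contact angle $0$ or $\pi$ prevents touching. This yields $\theta_0\le\theta\le\pi-\theta_0$ on $\overline{\partial Y_{\rm reg}}$. Regularity and two-sidedness of $Y_{\rm reg}$, together with $\dim$ of the singular set being less than $n-2$ on the boundary, come from the capillary regularity theory cited in \cite{WWXZ}. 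Then (iii) follows from Lemma~\ref{lem:longitude-degree}, applied to the trace region $U$ in $S$.

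For (iv), we apply Lemma~\ref{lem:variation}. Compactness of $Y$ and the choice of $\psi$ give
\[
 \tfrac12\bigl(S_n+2Nh+\tfrac12h^2\bigr)\ge c_0>0
\]
on $Y$. On the boundary, $h=0$ and $H_f-D_{P\circ\Phi}\ge\delta$ give $c_1=\delta$.

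The main obstacle is the construction of $\psi$ simultaneously with confinement. The condition $\tfrac12\psi^2-2\psi'\ge$ (positive) must hold where $S_n$ is small at infinity, while $\psi$ must also be large enough to trap the minimizer. I would first try $\psi$ equal to a tiny multiple of $\sigma$ near $K$, switching to a blow-up profile $\psi=4/(T_0-\sigma)$, truncated, which satisfies $\tfrac12\psi^2-2\psi'=0$. That profile would then be perturbed slightly to obtain strict positivity.
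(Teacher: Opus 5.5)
Your choice of $P$, the boundary margin $c_1$, and your final Riccati profile for $h$ agree with the paper. The paper uses $h=-8\zeta(\sigma)/(T-\sigma)$, with the cutoff error confined to the compact shell $\sigma^{-1}([1,2])$, where $\min S_n(g,f)>0$. Your intermediate recipe $\psi'\le\min(\frac18\inf S_n,\frac14\psi^2)$ cannot diverge when $\inf S_n$ over annuli is summable. Only the pointwise condition $2\psi'<S_n(g,f)+\frac12\psi^2$ is needed, and no perturbation of the critical profile is required.

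The genuine gap is the inactivity of the pole constraints, and the mechanism you invoke is not the right one. A contact angle of $0$ or $\pi$ does not keep the interface off the constrained collars. Indeed, $|\beta|=1$ is exactly the degenerate case excluded by strict adhesion. If $H_f$ were $\le0$ near $(P\circ\Phi)^{-1}(p_-)$ (say $S$ totally geodesic and $f$ constant there), the problem near $C_-$ would be an obstacle problem, and nothing would prevent $Y$ from lying on $\partial C_-$. What makes the constraint inactive is weighted mean convexity, $H_f\ge2\kappa>0$. You have this from $H_f>D_\Phi\ge0$, but you never use it.

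The paper builds collars $C_\pm\Subset V_\pm$ lying under the graphs $t=\tau(b^2-s_\pm^2)$ over $\{s_\pm<b\}$ in a normal collar. Their inner faces have unit normals $Q_\pm$, which are tilted versions of $\partial_t$, with $\diver_fQ_\pm\le-\kappa/2$. The paper arranges $\langle Q_-,\nu\rangle+\beta\ge0$ on $V_-\cap S$, and $\langle Q_+,\nu\rangle-\beta\ge0$ on $V_+\cap S$. Where $\beta<1$, this forces the collar radius $b$ to be chosen after the tilt, with $a(b)<\arctan(2\tau r_*c_*)$ for the angular profile $a$ of $P$. Gauss--Green for $e^{-f}Q_-$ then gives $\cF_{f,h,\beta}(E\cup V_-)-\cF_{f,h,\beta}(E)\le-\frac\kappa2\int_{V_-\setminus E}e^{-f}$. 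Hence $\chi_E=1$ a.e.\ on $V_-$, and complementation gives $\chi_E=0$ a.e.\ on $V_+$. Without this step you do not get local minimality near $Y$ in (i), nor the angle bound in (ii). You also lack the separation $(P\circ\Phi)^{-1}(p_-)\Subset U$, $\overline U\cap(P\circ\Phi)^{-1}(p_+)=\varnothing$ that Lemma~\ref{lem:longitude-degree} requires in (iii).

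The confinement step is also missing its mechanism. Deleting $W=E\cap\{\sigma>T\}$ changes the energy by $\int_{E\cap\{\sigma=T\}}e^{-f}-\int_{\partial^*E\cap\{\sigma>T\}}e^{-f}+\int_We^{-f}h$. Your estimate drops the deleted perimeter. A large value of $-h$ cannot substitute for comparing the slice with that perimeter, because the slice area is not controlled by the volume of $W$.

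The paper places the blow-up of $h$ at a regular value $T$ of $\sigma$ and works on a shell $\{s_0<\sigma<s_1\}$ of regular values close to $T$. There it applies Gauss--Green to $e^{-f}V$ with $V=\nabla\sigma/|\nabla\sigma|$. This bounds the slice by the deleted perimeter plus $\int_We^{-f}|\diver_fV|$. The blow-up is used only to guarantee $h<-|\diver_fV|-1$ on that shell. That gives $0\le\cF_{f,h,\beta}(E\setminus W_s)-\cF_{f,h,\beta}(E)\le-\int_{W_s}e^{-f}$, hence $W_s=\varnothing$ for a.e.\ $s$ below the outer constraint. So that constraint is inactive and $Y$ is compact.

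Finally, (iii) needs more than a citation of Lemma~\ref{lem:longitude-degree}. The wetted set $U$ must have finite perimeter, with $\partial_SU\setminus K_\partial=\partial Y_{\rm reg}$ and $U$ lying on one side. The paper obtains this from De Philippis--Maggi boundary regularity. That theory requires $|\beta|<1$ near $Y\cap S$, so it again rests on the pole step.
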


\begin{remark}
By item \ref{itemi}, there exists a neighborhood $U$ of $Y$ such that $E$ minimizes $\cF_{f,h,\beta}$ among all finite perimeter competitors $E'$ satisfying $E'\triangle E\Subset U.$
\end{remark}

\begin{proof}
{\bf Proof of item \ref{itemi}.}
We first choose the map $P$ and construct the two pole collars.
Set
$$
 \delta_S:=\min_S(H_f-D_\Phi)>0,
$$
and choose $\varepsilon>0$ such that
$\epsilon\max_S D_\Phi<\delta_S/2$.
Since $p_-$ and $p_+$ are regular values of $\Phi$ and $S$ is compact, we may choose $r_*>0$ sufficiently small that the inverse image of each closed geodesic ball $\overline {B_{2r_*}(p_\pm)}$ is a finite disjoint union of sets on which $\Phi$ is a diffeomorphism onto  $\overline {B_{2r_*}(p_\pm)}$. We also require $2r_*<\pi/2$.
Following the construction in Lemma \ref{lem:spherical-tools}, choose a smooth cutoff function $k$ to vanish exactly on $[0,r_*]\cup[\pi-r_*,\pi]$ and to be positive
on $(r_*,\pi-r_*)$. 
Define
$$
 a(s)=
 \frac{\pi\int_0^s k(t)\,dt}
      {\int_0^\pi k(t)\,dt},
 \qquad 0\le s\le\pi.
$$
In polar coordinates centered at $p_-$, define $P$ by
$$
 P(s,z)=(a(s),z),
 \qquad 0<s<\pi,\quad z\in\mathbb{S}^{n-2}.
$$
Thus $P$ changes the polar angle from $s$ to $a(s)$
and leaves the longitude map unchanged.
Since $a=0$ near $0$ and $a=\pi$ near $\pi$,
the map $P$ extends smoothly across both poles.
It is homotopic to the identity and therefore has degree one.
By taking $r_*$ sufficiently small in this construction,
we may ensure that $\operatorname{Lip} P\le 1+\varepsilon$.
By Lemma \ref{lem:spherical-tools} and the choice of $\varepsilon$,
we have
$$
 H_f-D_{P\circ\Phi}
 \ge H_f-(1+\varepsilon)D_\Phi
 \ge\frac{\delta_S}{2}>0,
 \qquad
 \deg(P\circ\Phi)=\deg\Phi.
$$
Define
$$
 \beta(x)=\langle P(\Phi(x)), \,p_-\rangle_{\mathbb R^n},
 \qquad x\in S.
$$
Then $\beta\in C^\infty(S)$ and $|\beta|\le1$.
Away from the pole preimages, write $P\circ\Phi=(\theta,G)$, where $\theta$ is the polar angle measured from $p_-$. In these coordinates, $\beta=\cos\theta$.

For $ x\in\Phi^{-1}\bigl(\overline{B_{2r_*}(p_\pm)}\bigr)$, define
$$
 s_\pm(x)=d_{\SSS^{n-1}}(p_\pm,\Phi(x)).
$$
Then the functions $s_\pm^2$ are smooth on a neighborhood
of $\Phi^{-1}\bigl(\overline {B_{2r_*}(p_\pm)}\bigr)$. By the construction of $P$, we have
$P^{-1}(p_\pm)=\overline {B_{r_*}(p_\pm)}$. Consequently,
$$
 (P\circ\Phi)^{-1}(p_\pm)=\Phi^{-1}\bigl(\overline {B_{r_*}(p_\pm)}\bigr)=\{s_\pm\le r_*\}.
$$
In particular,
\begin{equation}\label{eq:lem3-3-beta}
 \beta=1\quad\text{on }(P\circ\Phi)^{-1}(p_-),
 \qquad
 \beta=-1\quad\text{on }(P\circ\Phi)^{-1}(p_+).
\end{equation}
Note the sets $\Phi^{-1}\bigl(\overline {B_{2r_*}(p_\pm)}\bigr)$
are compact subsets of $S$.
Since $d\Phi$ is invertible on these sets, there exists
a constant $c_*>0$ such that
$$
 |ds_\pm|_{g_S}\ge c_*
 \qquad\text{where }0<s_\pm\le2r_*.
$$
Since $H_f>D_\Phi\ge0$ and $S$ is compact, we may choose $\kappa>0$ such that
\begin{equation}\label{eq:pole-H-gap}
 H_f\ge2\kappa\qquad\text{on }S.
\end{equation}
Choose an inward normal collar $S\times[0,t_*)$, where $t$
is the inward distance from $S$. In these coordinates,
$$
 g=dt^2+g_t,
$$
where $g_t$ is the induced metric on the slice $S\times\{t\}$.
In particular, $g_0=g_S$ and $\partial_t=-\nu$ along $S$.
For a vector field $Q$, write
$\diver_fQ:=\diver Q-\langle\nabla f,Q\rangle$.
Along $S$ we have
$$
 \diver_f\partial_t=-H_S+\nu f=-H_f.
$$
By continuity of $\diver_f\partial_t$ and compactness of $S$, after decreasing $t_*$
if necessary, we have
\begin{equation}\label{eq:pole-collar-sign}
 \diver_f\partial_t\le-\kappa
 \qquad\text{on }S\times[0,t_*).
\end{equation}

Extend $s_\pm^2$ in the normal direction by setting
$s_\pm^2(x,t)=s_\pm^2(x)$.
For $\tau>0$, define the vector fields
$$
 Q_\pm=
 \frac{\partial_t+\tau\nabla_{g_t}(s_\pm^2)}
 {\sqrt{1+\tau^2|d(s_\pm^2)|_{g_t}^2}}
$$
on $\Phi^{-1}(B_{2r_*}(p_\pm))\times[0,t_*)$.
These are smooth unit vector fields and converge uniformly
in $C^1$ to $\partial_t$ as $\tau\downarrow0$. We may therefore fix $\tau>0$ sufficiently small that
$\tau(2r_*)^2<t_*$ and
\begin{equation}\label{eq:pole-Q-divergence}
 \diver_fQ_\pm\le-\frac{\kappa}{2}
 \qquad\text{on }
 \Phi^{-1}(B_{2r_*}(p_\pm))\times[0,t_*).
\end{equation}
Since $a(r_*)=0$, we may now choose $b\in(r_*,2r_*)$ sufficiently
close to $r_*$, such that
$$
 a(b)<\arctan(2\tau r_*c_*).
$$
Set
$$
 O_\pm=\{s_\pm<b\},\qquad
 \psi_\pm=b^2-s_\pm^2,\qquad
 V_\pm=\{(x,t):x\in O_\pm,\ 0\le t<\tau\psi_\pm(x)\}.
$$
The sets $O_-$ and $O_+$ have disjoint closures and smooth boundaries.
Moreover, $\psi_\pm>0$ in $O_\pm$, $\psi_\pm=0$ on
$\partial O_\pm$, and $d\psi_\pm\ne0$ on $\partial O_\pm$. The interior face of
$V_\pm$ is the graph
$$
 \Gamma_\pm:=\{(x,t):x\in O_\pm,\ t=\tau\psi_\pm(x)\},
$$
and $Q_\pm$ is its outward unit normal. Notice that
$d\psi_\pm=-d(s_\pm^2)$ is independent of $b$, so the final choice
of $b$ does not affect \eqref{eq:pole-Q-divergence}.

Along $S$, we have
$$
 \langle Q_\pm,\nu\rangle
 =-\frac{1}{\sqrt{1+\tau^2|d\psi_\pm|_{g_S}^2}},
 \qquad
 \beta|_{O_-}=\cos a(s_-),\quad
 \beta|_{O_+}=-\cos a(s_+).
$$
If $s_\pm\le r_*$, then $a(s_\pm)=0$. If $r_*<s_\pm<b$, then
$$
 a(s_\pm)\le a(b)<\arctan(2\tau r_*c_*)
 \le\arctan\bigl(\tau|d\psi_\pm|_{g_S}\bigr),
$$
because $|d\psi_\pm|_{g_S}=2s_\pm|ds_\pm|_{g_S}\ge2r_*c_*$.
Thus, in both cases,
$$
 \cos a(s_\pm)\ge
 \frac{1}{\sqrt{1+\tau^2|d\psi_\pm|_{g_S}^2}}.
$$
Consequently,
\begin{equation}\label{eq:pole-wall-signs}
 \langle Q_-,\nu\rangle+\beta\ge0
 \quad\text{on }V_-\cap S,
 \qquad
 \langle Q_+,\nu\rangle-\beta\ge0
 \quad\text{on }V_+\cap S.
\end{equation}
Since $\{s_\pm\le r_*\}\Subset O_\pm$ and
$\psi_\pm\ge b^2-r_*^2>0$ on $\{s_\pm\le r_*\}$, we can choose smooth collar neighborhoods
$C_\pm\Subset V_\pm$ such that
$$
 (P\circ\Phi)^{-1}(p_\pm)
 \subset\operatorname{int}_S(C_\pm\cap S).
$$
We impose the constraints
\begin{equation}\label{eq:polar-set-constraints}
 \chi_E=1\quad\text{a.e. on }C_-,
 \qquad
 \chi_E=0\quad\text{a.e. on }C_+.
\end{equation}

{\bf We now construct a constrained minimizer. The auxiliary constraints will be shown below to be inactive near the resulting capillary interface.}

\noindent{\bf Case (1): $M$ is compact. } In this case, take $h\equiv0$ and minimize
$\cF_{f,h,\beta}$, with $\beta=\cos\theta$, among sets of finite perimeter satisfying \eqref{eq:polar-set-constraints}. Such competitors
exist since $\overline C_-$ and $\overline C_+$ are disjoint. To see that the infimum is attained, let $\{E_j\}$ be a minimizing sequence. Since $M$ is compact,
$e^{-f}$ is bounded above and below by positive constants. Moreover, $|\beta|\le1$ and $0\le\operatorname{Tr}_S\chi_{E_j}\le1$, so
$$
 \int_{\partial^*E_j\cap M^\circ}e^{-f}
 \le
 \cF_{f,0,\beta}(E_j)+\int_Se^{-f} .
$$
Thus $\{E_j\}$ has uniformly bounded perimeter. By compactness for
sets of finite perimeter, after passing to a subsequence,
$$
 \chi_{E_j}\longrightarrow\chi_E
 \qquad\text{in }L^1(M)
$$
for some finite perimeter set $E$. The constraints
\eqref{eq:polar-set-constraints} pass to the limit. Finally, the standard lower semicontinuity of the capillary perimeter functional gives
$$
 \cF_{f,0,\beta}(E)
 \le
 \liminf_{j\to\infty}\cF_{f,0,\beta}(E_j).
$$
Hence $E$ minimizes $\cF_{f,0,\beta}$ among all sets of finite perimeter satisfying \eqref{eq:polar-set-constraints}.
\medskip\\
\noindent{\bf Case (2): $M$ is noncompact.} 
Choose a connected compact set $K\subset M$ containing a neighborhood
of
$$
 S\cup\overline V_-\cup\overline V_+.
$$
By Lemma~\ref{lem:exhaustion}, there exists a smooth proper function $\sigma:M\rightarrow[0,\infty),$ with
$$
 |d\sigma|\le1,
 \qquad
 \sigma=0
 \quad\text{on a neighborhood of }K.
$$
Let $\zeta:\mathbb R\to[0,1]$ be a smooth nondecreasing function, with
$$
 \zeta=0\quad\text{on }(-\infty,1],
 \qquad
 \zeta=1\quad\text{on }[2,\infty).
$$
For a sufficiently large regular value $T>3$ of $\sigma$, set
\begin{equation}\label{eq:mu-barrier}
 \eta_T(s)=\frac{8\zeta(s)}{T-s},
 \qquad
 h_T=-\eta_T(\sigma)
 \quad\text{on }\{\sigma<T\}.
\end{equation}
Since $\sigma=0$ near $K$, we have
$$
 h_T=0
 \quad\text{on a neighborhood of }
 S\cup\overline V_-\cup\overline V_+.
$$
Moreover,
$$
 \eta_T'=
 \frac{8\zeta'}{T-s}
 +\frac{8\zeta}{(T-s)^2}
 =
 \frac{\eta_T^2}{8}+e_T,
$$
where 
\begin{equation} \label{eq:riccati-error}
 e_T=
 \frac{8\zeta'}{T-s}
 +\frac{8\zeta(1-\zeta)}{(T-s)^2}
 \ge0.   
\end{equation}
The function $e_T$ is supported in $[1,2]$ and satisfies
$$
 \|e_T\|_\infty
 \le
 \frac{8\|\zeta'\|_\infty}{T-2}
 +\frac{2}{(T-2)^2}
 \longrightarrow0
 \qquad\text{as }T\to\infty.
$$
Since $|d\sigma|\le1$,
$$
 |dh_T|
 \le
 \eta_T'(\sigma)
 \le
 \frac18h_T^2+e_T(\sigma).
$$
The set $\sigma^{-1}([1,2])$ is compact, so
$$
 \min_{\sigma^{-1}([1,2])}S_n(g, f)>0.
$$
Increasing $T$ if necessary, we may assume
$$
 2\|e_T\|_\infty
 <
 \frac12\min_{\sigma^{-1}([1,2])}S_n(g, f).
$$
It follows that
\begin{equation}\label{eq:shielding-bulk}
 S_n(g, f)+\frac12h_T^2-2|dh_T|
 \ge
 S_n(g, f)+\frac14h_T^2-2e_T(\sigma)>0
 \qquad\text{on }\{\sigma<T\}.
\end{equation}

Since $T$ is a regular value and $\sigma$ is proper, there exists
$\eps_T>0$, with $T-\eps_T>2$, such that
$$
 B_T:=\sigma^{-1}([T-\eps_T,T+\eps_T])
$$
is compact and $d\sigma\ne0$ throughout $B_T$. Thus
$$
 V:=\frac{\nabla\sigma}{|\nabla\sigma|}
$$
is smooth near $B_T$. Set
$$
 \diver_fV=\diver V-\langle\nabla f,V\rangle.
$$
Then $\diver_fV$ is continuous
on $B_T$, and therefore
$$
L_T:=\max_{B_T}|\diver_fV|<\infty.
$$
Choose regular values
$$
 T-\eps_T<s_0<s_a<s_1<T
$$
so close to $T$ that
$$
 \frac8{T-s_0}>L_T+1.
$$
Since $\zeta=1$ for $\sigma>2$, we have
$$
 h_T=-\frac8{T-\sigma}
 \qquad\text{on }\{2<\sigma<T\},
$$
and therefore
\begin{equation}\label{eq:terminal-mu}
 h_T<-|\diver_fV|-1
 \qquad\text{on }\{s_0<\sigma<s_1\}.
\end{equation}
We now extend $h_T$ to a global smooth prescribed function. Choose $\widehat s\in(s_1, T)$ and a smooth function
$\chi:\mathbb R\to[0,1]$ such that
$$
 \chi=1\quad\text{on }(-\infty,s_1],
 \qquad
 \chi=0\quad\text{on }[\widehat s,\infty).
$$
Define
$$
 h=
 \begin{cases}
 -\chi(\sigma)\eta_T(\sigma),&\sigma<T,\\
 0,&\sigma\ge T.
 \end{cases}
$$
Then $h\in C^\infty(M)$ and
$$
 h=h_T\quad\text{on }\{\sigma\le s_1\},
 \qquad
 h=0\quad\text{on a neighborhood of }
 S\cup V_-\cup V_+.
$$

Let $M_1$ be the closure of the connected component of $\{\sigma<s_1\}$ containing $K$. Then $M_1$ is compact. We minimize $\cF_{f,h,\beta}$ among sets of finite perimeter $E\subset M_1$
satisfying \eqref{eq:polar-set-constraints} and the additional constraint
\begin{equation}\label{eq:outer-set-constraint}
 \chi_E=0
 \quad\text{a.e. on }
 M_1\cap\{s_a<\sigma<s_1\}.
\end{equation}
We regard each competitor as a subset of $M$ by setting $\chi_E=0$ on $M\setminus M_1$. Since
\eqref{eq:outer-set-constraint} makes $E$ empty on a neighborhood of the artificial boundary $\{\sigma=s_1\}$, this extension creates no additional reduced boundary there. The constrained class is nonempty. Indeed, since
$\sigma=0$ on a neighborhood of $V_-\cup V_+$ and $s_a>0$, the collar region $V_-$ is disjoint from both $C_+$ and $M_1\cap\{s_a<\sigma<s_1\}$. Hence $V_-$ is a finite perimeter competitor satisfying
\eqref{eq:polar-set-constraints} and
\eqref{eq:outer-set-constraint}.

Let $\{E_j\}$ be a minimizing sequence in this class and fix one competitor $V_-$. We may assume
$$
 \cF_{f,h,\beta}(E_j)
 \le
 \cF_{f,h,\beta}(V_-)+1.
$$
Since $M_1$ is compact, $e^{-f}$ is bounded above and below there by
positive constants, and $h$ is bounded. Since $|\beta|\le1$, we have
$$
 \int_{\partial^*E_j\cap M_1^\circ}e^{-f}
 \le
 \cF_{f,h,\beta}(E_j)
 +\int_S e^{-f}
 +\int_{M_1}|h|e^{-f}.
$$
Hence the weighted perimeters, and therefore the ordinary perimeters, of $E_j$ are uniformly bounded. Also
$$
 \|\chi_{E_j}\|_{L^1(M_1)}
 \le \Vol(M_1).
$$
By compactness for sets of finite perimeter, after passing to a subsequence,
$$
 \chi_{E_j}\longrightarrow\chi_E
 \qquad\text{in }L^1(M_1)
$$
for some finite perimeter set $E\subset M_1$. The constraints \eqref{eq:polar-set-constraints} and
\eqref{eq:outer-set-constraint} pass to the limit. By the standard lower semicontinuity of the capillary functional,
$$
 \cF_{f,h,\beta}(E)
 \le
 \liminf_{j\to\infty}\cF_{f,h,\beta}(E_j).
$$
Hence $E$ minimizes $\cF_{f,h,\beta}$ among all sets of finite perimeter in $M_1$ satisfying \eqref{eq:polar-set-constraints} and
\eqref{eq:outer-set-constraint}.
\medskip\\
\noindent{\bf Inactivity of the pole constraints.}
Recall that the collars $C_\pm\Subset V_\pm$, the interior faces $\Gamma_\pm$ of $V_\pm$, and the vector fields $Q_\pm$ were chosen above so that \eqref{eq:pole-Q-divergence} and \eqref{eq:pole-wall-signs} hold.

Consider first the $p_-$ collar and set $W_-=V_-\setminus E.$
If $|W_-|=0$, then
\begin{equation}\label{p-inact}
\chi_E=1\qquad\text{a.e. on }V_-,   
\end{equation}
and there is nothing to prove. Suppose therefore that $|W_-|>0$ and set $E'=E\cup V_-.$ Then $E'\setminus E=W_-.$ Moreover, $E'$ is an admissible competitor. Indeed, it still satisfies the two pole constraints, and in Case \textup{(2)},
$$
 V_-\subset\{\sigma=0\}
 \quad\text{is disjoint from}\quad
 \{s_a<\sigma<s_1\},
$$
so the outer constraint \eqref{eq:outer-set-constraint} is preserved.

Note
$$E'=E\quad\text{on }M\setminus V_-,
\qquad
E'=V_-\quad\text{on }V_-.$$
Thus the modification replaces the portion of $E$ inside $V_-$ by the whole collar $V_-$, while leaving $E$ unchanged outside $V_-$. The weighted perimeter comparison gives
\begin{equation}\label{eq:pole-perimeter-comparison}
\int_{\partial^*E'\cap M^\circ}e^{-f}
 -
 \int_{\partial^*E\cap M^\circ}e^{-f}
\le
\int_{\Gamma_-}e^{-f}\operatorname{Tr}_{V_-}\chi_{W_-}
 -
 \int_{\partial^*E\cap V_-}e^{-f}.
\end{equation}
Applying the finite perimeter Gauss--Green formula to $W_-$ with the vector field $e^{-f}Q_-$ gives
\begin{equation}\label{eq:pole-gauss-green}
 \int_{W_-}e^{-f}\diver_fQ_-
 =
 \int_{\Gamma_-}
 e^{-f}\operatorname{Tr}_{V_-}\chi_{W_-}
 -
 \int_{\partial^*E\cap V_-}
 e^{-f}\langle Q_-,N\rangle
 +
 \int_S
 e^{-f}\operatorname{Tr}_S\chi_{W_-}
 \langle Q_-,\nu\rangle,
\end{equation}
Here $N$ denotes the measure theoretic outward unit normal of $E$. Combining \eqref{eq:pole-perimeter-comparison}, \eqref{eq:pole-gauss-green} and $|\langle Q_-, N\rangle|\leq1$, we obtain
\begin{equation}\label{eq:pole-perimeter-final}
 \int_{\partial^*E'\cap M^\circ}e^{-f}
 -
 \int_{\partial^*E\cap M^\circ}e^{-f}
 \le
 \int_{W_-}e^{-f}\diver_fQ_-
 -
 \int_S
 e^{-f}\operatorname{Tr}_S\chi_{W_-}
 \langle Q_-,\nu\rangle .
\end{equation}
Since $h=0$ on $V_-$, the volume term $\int_Mh\,\chi_Ee^{-f}$ does not change. The change of
the wall term is
$$
 -\int_S
 \beta\,\operatorname{Tr}_S\chi_{W_-}\,e^{-f}.
$$
Therefore
\begin{equation} \label{eq:pole-minus-comparison}
\cF_{f,h,\beta}(E')-\cF_{f,h,\beta}(E)
\le
\int_{W_-}e^{-f}\diver_fQ_-
-
\int_S
 e^{-f}\operatorname{Tr}_S\chi_{W_-}
 \bigl(\langle Q_-,\nu\rangle+\beta\bigr).
\end{equation}
Note $V_-\cap S=O_-$. Since the trace $\operatorname{Tr}_S\chi_{W_-}$ vanishes almost everywhere on $S\setminus O_-$ and is nonnegative almost
everywhere on $O_-$, \eqref{eq:pole-wall-signs} implies
$$
 \int_S e^{-f}\operatorname{Tr}_S\chi_{W_-}
 \bigl(\langle Q_-,\nu\rangle+\beta\bigr)\ge0.
$$
Together with \eqref{eq:pole-Q-divergence}, \eqref{eq:pole-minus-comparison} therefore gives
$$
 \cF_{f,h,\beta}(E')-\cF_{f,h,\beta}(E)
 \le
 -\frac{\kappa}{2}\int_{W_-}e^{-f}<0,
$$
where the strict inequality follows from the assumption
$|W_-|>0$. This contradicts the minimality of $E$.
Hence $|W_-|=0$, and we have \eqref{p-inact}.

The $p_+$ collar follows from the same comparison applied to the
complement. Let
$$
 \Omega=
 \begin{cases}
 M,&\text{in Case~\textup{(1)}},\\
 M_1,&\text{in Case~\textup{(2)}},
 \end{cases}
 \qquad
 E^c=\Omega\setminus E.
$$
In this paragraph, all perimeters and volume terms are taken relative to $\Omega$. Since relative perimeter is invariant under complementation and $\chi_E=1-\chi_{E^c}$, we have
$$
 \cF_{f,h,\beta}(E)
 =
 \cF_{f,-h,-\beta}(E^c)
 -\int_S\beta e^{-f}
 -\int_\Omega h e^{-f}.
$$
The last two terms are independent of $E^c$. Hence $E^c$ minimizes $\cF_{f,-h,-\beta}$ in the corresponding constrained class. Since the second inequality in \eqref{eq:pole-wall-signs} is exactly $\langle Q_+,\nu\rangle+(-\beta)\ge0$, applying the preceding $p_-$ comparison to $E^c$ gives
$$
 \chi_{E^c}=1\qquad\text{a.e. on }V_+,
$$
and hence
\begin{equation}\label{p+inact}
 \chi_E=0\qquad\text{a.e. on }V_+.
\end{equation}

Since $C_\pm\Subset V_\pm$ and $\chi_E$ is constant on $V_\pm$, the pole constraints \eqref{eq:polar-set-constraints} impose no restriction near the capillary interface.
\medskip\\
\noindent{\bf Inactivity of the outer constraint.}
This is needed only in Case~\textup{(2)}. Choose
$$
 s_0<b_0<b_1<s_a.
$$
For almost every $s\in(b_0,b_1)$, set
$$
 U_s=M_1\cap\{s<\sigma<s_1\},
 \qquad
 W_s=E\cap U_s.
$$
Since $\sigma=0$ on a neighborhood of $S\cup\overline V_-\cup\overline V_+$,
$$
 \overline{U_s}\cap(S\cup V_-\cup V_+)=\varnothing.
$$
Moreover, \eqref{eq:outer-set-constraint} implies that the trace of $\chi_E$ vanishes near $\{\sigma=s_1\}$.

Recall that $V=\nabla\sigma/|\nabla\sigma|$ on
$\{s_0<\sigma<s_1\}$, pointing toward increasing $\sigma$. Applying the finite perimeter Gauss--Green formula to $W_s$ with the
vector field $e^{-f}V$ gives
\begin{equation}\label{eq:cut-gauss-green}
 \int_{W_s}e^{-f}\diver_fV
 =
 \int_{\partial^*E\cap U_s}
 e^{-f}\langle V,N\rangle
 -
 \int_{E\cap\{\sigma=s\}}e^{-f}.
\end{equation}
Therefore,
\begin{equation}\label{eq:cut-area-comparison}
 \int_{E\cap\{\sigma=s\}}e^{-f}
 -
 \int_{\partial^*E\cap U_s}e^{-f}
 \le
 -\int_{W_s}e^{-f}\diver_fV.
\end{equation}
The competitor $E\setminus W_s$ is admissible. Indeed, since $\overline{U_s}\cap(S\cup V_-\cup V_+)=\varnothing$, the modification does not affect either the pole constraints or the boundary term over $S$, while deletion clearly preserves
\eqref{eq:outer-set-constraint}. Hence minimality and \eqref{eq:cut-area-comparison} give
\begin{equation}\label{eq:terminal-comparison}
 0
 \le
 \cF_{f,h,\beta}(E\setminus W_s)
 -
 \cF_{f,h,\beta}(E)
 \le
 \int_{W_s}e^{-f}(h-\diver_fV)
 \le
 -\int_{W_s}e^{-f},
\end{equation}
where the last inequality follows from
\eqref{eq:terminal-mu}. Thus $|W_s|=0$.

Choosing such an $s<b_1$, we obtain
$$
 E\cap\{b_1<\sigma<s_1\}=\varnothing
 \qquad\text{a.e.}
$$
Since $b_1<s_a$, this empty collar strictly contains the one imposed by \eqref{eq:outer-set-constraint}. Hence the outer constraint is inactive
near the capillary interface.

In Case \textup{(2)}, we regard $E\subset M_1$ as a subset of $M$. Since $E$ vanishes in a neighborhood of $\{\sigma=s_1\}$, no reduced boundary is created along this artificial boundary. Since all auxiliary constraints are separated from $Y$, there exists an open neighborhood $\mathcal U$ of $Y$ such that
$$
 \cF_{f,h,\beta}(E)\le \cF_{f,h,\beta}(E')
$$
for every set of finite perimeter $E'$ with
$E'\triangle E\Subset\mathcal U.$
This proves item \ref{itemi}.
\medskip

\begin{figure}[!htbp]
\centering
\begin{tikzpicture}[x=1cm,y=1cm,font=\small,
  >=Latex,line cap=round,line join=round]
\fill[figblue!7] (0,0)--(6,0)
  .. controls (5.40,1.20) and (5.80,2.60) .. (6.55,3.65)
  --(0,3.65)--cycle;

\draw[densely dashed,black!30]
  (0,0)--(0,3.65)--(12,3.65)--(12,0);

\node[figblue] at (2.55,2.65) {$E$};
\node[black!65] at (9.35,2.65) {$M\setminus E$};
\node[black!65] at (11.45,3.25) {$M$};

\filldraw[fill=figblue!17,draw=figblue,line width=.8pt]
  (1,0)--(1,.45)
  .. controls (1,1.65) and (4,1.65) .. (4,.45)
  --(4,0)--cycle;

\filldraw[fill=figorange!13,draw=figorange,line width=.8pt]
  (8,0)--(8,.45)
  .. controls (8,1.65) and (11,1.65) .. (11,.45)
  --(11,0)--cycle;

\node[figblue,align=center] at (2.5,.65)
  {$C_-$\\[3pt]$\chi_E=1$};
\node[figorange,align=center] at (9.5,.65)
  {$C_+$\\[3pt]$\chi_E=0$};

\draw[figteal,line width=1.5pt] (6,0)
  .. controls (5.40,1.20) and (5.80,2.60) .. (6.55,3.65);
\node[figteal,anchor=west] at (6.05,2.05) {$Y$};

\draw[black!65,line width=.7pt] (0,0)--(12,0);
\draw[figblue,line width=1.4pt] (0,0)--(6,0);
\node[figblue] at (.42,.27) {$U$};
\node[anchor=east,black!65] at (12,-.91) {$S=\partial M$};

\draw[figblue,line width=3pt] (2.12,0)--(2.88,0);
\draw[figorange,line width=3pt] (9.12,0)--(9.88,0);

\node[figblue,anchor=north] at (2.5,-.18)
  {$(P\circ\Phi)^{-1}(p_-)$};
\node[figorange,anchor=north] at (9.5,-.18)
  {$(P\circ\Phi)^{-1}(p_+)$};

\fill[figteal] (6,0) circle (2.2pt);
\draw[figteal!75] (6,-.07)--(6,-.65);
\node[figteal,align=center,anchor=north] at (6,-.72)
  {regular contact boundary\\[2pt]
   $\partial Y_{\rm reg}$};

\filldraw[fill=black!2,draw=black!60,line width=.85pt]
  (6,-3.45) circle (1.12);
\draw[densely dashed,black!30]
  (6,-3.45) ellipse (.28 and 1.12);
\node[fill=black!2,inner sep=3pt] at (6,-3.45)
  {$\mathbb S^{n-1}$};

\draw[->,figblue,line width=1pt]
  (2.5,-.83)
  .. controls (2.5,-2.85) and (3.50,-3.45) .. (4.80,-3.45);
\draw[->,figorange,line width=1pt]
  (9.5,-.83)
  .. controls (9.5,-2.85) and (8.50,-3.45) .. (7.20,-3.45);

\node[figblue,fill=white,inner sep=2pt] at (2.8,-2.35) {$P\circ\Phi$};
\node[figorange,fill=white,inner sep=2pt] at (9.2,-2.35) {$P\circ\Phi$};

\fill[figblue] (4.88,-3.45) circle (2pt);
\fill[figorange] (7.12,-3.45) circle (2pt);

\node[figblue,anchor=south east] at (4.85,-3.28) {$p_-$};
\node[figorange,anchor=south west] at (7.15,-3.28) {$p_+$};

\end{tikzpicture}

\caption{The pole preimages and the prescribed inclusion and exclusion regions. The set $C_-$ is prescribed to lie in $E$, whereas $C_+$ is prescribed to lie outside $E$. The wetted region $U$ separates the two pole preimages. The precise identification of its regular boundary with the contact boundary of $Y$ is made in item \ref{itemiii}.}
\label{fig:polar-neighborhoods}
\end{figure}

\noindent{\bf Proof of item \ref{itemii}.}
In Case \textup{(2)}, the preceding cutting argument gives $Y\subset\{\sigma\le b_1\},$
and hence $Y$ is compact since $\sigma$ is proper. In Case \textup{(1)}, compactness is immediate. On $Y_{\rm reg}$, the measure theoretic outward normal of $E$ agrees with the smooth unit normal to the regular interface. Thus $Y_{\rm reg}$ is two-sided and is naturally oriented by the normal
pointing out of $E$.

By the preceding argument,
$$
 \chi_E=1 \quad\text{a.e. on }V_-,
 \qquad
 \chi_E=0 \quad\text{a.e. on }V_+.
$$
Since $V_\pm$ are relatively open in $M$, this implies
$Y\cap V_\pm=\varnothing$. Together with
$$
 (P\circ\Phi)^{-1}(p_\pm)
 \subset \operatorname{int}_S(C_\pm\cap S)
 \subset \operatorname{int}_S(V_\pm\cap S),
$$
we obtain
$$
 (Y\cap S)\cap
 (P\circ\Phi)^{-1}(\{p_-,p_+\})=\varnothing.
$$
Thus the prescribed angle $\theta$ takes values in $(0,\pi)$
on the compact set $Y\cap S$. By continuity, there exists
$\theta_0>0$ such that
$$
 \theta_0\le\theta\le\pi-\theta_0
 \qquad\text{on }Y\cap S.
$$
After decreasing $\theta_0$ if necessary, the same bounds hold
on a neighborhood of $Y\cap S$ in $S$.

Together with the local minimality established in
item~\ref{itemi}, this uniform nondegeneracy allows us to apply
the local capillary regularity in
\cite[Lemma 4.4(i)]{WWXZ}.
It follows that $Y_{\rm reg}$ is a smooth embedded hypersurface
with boundary
$$
 \partial Y_{\rm reg}=Y_{\rm reg}\cap S.
$$
Finally, since
$\overline{\partial Y_{\rm reg}}\subset Y\cap S$,
the angle bound above also holds on
$\overline{\partial Y_{\rm reg}}$.
This proves item \ref{itemii}.

\medskip
\noindent{\bf Proof of item \ref{itemiii}.}
We first show that the wetted region on $S$, namely the subset of $S$
where the $BV$ trace $\operatorname{Tr}_S\chi_E$ equals one, has finite perimeter in $S$.
The proof of item~\ref{itemii} shows that $|\beta|<1$ on a
neighborhood of the compact set $Y\cap S$.
By item~\ref{itemi}, $E$ is locally minimizing near $Y$,
and $h=0$ near $S$.
In smooth boundary charts, the surface energy has a regular
elliptic integrand, and $|\beta|<1$ gives the strict adhesion condition in \cite[Theorem 1.10]{DPM}.
That theorem therefore shows that the wetted region on $S$ has locally finite perimeter in $S$ near $Y\cap S$. Moreover, its boundary
regularity shows that $Y\cap S$ is locally a $C^{1,1/2}$
submanifold of dimension $n-2$ away from a set of zero
$\mathcal H^{n-2}$-measure. Consequently,
$$
 \mathcal H^{n-1}(Y\cap S)=0.
$$

Away from $Y\cap S$, the boundary trace
$\operatorname{Tr}_S\chi_E$ is locally constant, taking the
value zero or one almost everywhere. Let $U$ be the union
of all relatively open subsets of $S\setminus(Y\cap S)$
on which this trace equals one almost everywhere.
Then $U$ is relatively open in $S$, and
$$
 \partial_S U\subset Y\cap S.
$$
Since $\mathcal H^{n-1}(Y\cap S)=0$, we have
$$
 \operatorname{Tr}_S\chi_E=\chi_U
 \qquad\text{a.e. on }S.
$$
The local finiteness of the perimeter established above
near $Y\cap S$, together with local constancy of $\chi_U$
elsewhere, shows that $U$ has locally finite perimeter
throughout $S$. Compactness of $S$ therefore implies
that $U$ has finite perimeter in $S$.

The pole comparisons give $V_-\cap S\subset U$ and
$(V_+\cap S)\cap U=\varnothing$.
Since $V_+\cap S$ is relatively open in $S$, it is also
disjoint from $\overline U$.
The pole preimages $(P\circ\Phi)^{-1}(p_\pm)$ are compact
and contained in $V_\pm\cap S$, respectively. Hence
$$
 (P\circ\Phi)^{-1}(p_-)\Subset U,
 \qquad
 \overline U\cap(P\circ\Phi)^{-1}(p_+)=\varnothing.
$$

Set
$$
 \Sigma_Y=Y\setminus Y_{\rm reg},
 \qquad
 K_\partial=\partial_S U\cap\Sigma_Y.
$$
By \cite[Lemma 4.4(i)]{WWXZ}, $Y_{\rm reg}$ is a smooth
embedded hypersurface with
$\partial Y_{\rm reg}=Y_{\rm reg}\cap S$.
Near each regular contact point, the nondegenerate contact angle ensures that $Y_{\rm reg}$ meets $S$ transversely.
The boundary trace equals one on one side of
$\partial Y_{\rm reg}$ in $S$ and zero on the other.
Together with $\partial_S U\subset Y\cap S$, this gives
$$
 \partial_S U\setminus K_\partial=\partial Y_{\rm reg}.
$$
Thus $\partial_S U\setminus K_\partial$ is a smooth embedded hypersurface of $S$, and $U$ locally lies on one side of it.

Since $\Sigma_Y$ is closed in $Y$ and $\partial_S U$
is closed in $S$, the set $K_\partial$ is a closed subset
of the compact set $Y$, and hence is compact.
For $n\le4$,  \cite[Lemma 4.4(i)]{WWXZ} gives
$\Sigma_Y=\varnothing$, so $K_\partial=\varnothing$.
For $n\ge5$, it gives
$$
 \mathcal H^{n-5+\varepsilon}(\Sigma_Y)=0
 \qquad\text{for every }\varepsilon>0,
$$
and therefore the Hausdorff dimension of $K_\partial$ satisfies
$$
 \dim K_\partial
 \le \dim \Sigma_Y
 \le n-5<n-2.
$$

Let
$$
 P_{\rm lon}:
 \mathbb S^{n-1}\setminus\{p_-,p_+\}
 \longrightarrow\mathbb S^{n-2}
$$
denote the longitude projection.
The pole separation above shows that $\partial_S U$
avoids $(P\circ\Phi)^{-1}(\{p_-,p_+\})$.
Thus $G=P_{\rm lon}\circ P\circ\Phi$ is smooth on a
neighborhood of $\partial_S U$ in $S$, and in particular
on a neighborhood of $\overline{\partial Y_{\rm reg}}$,
since
$
 \overline{\partial Y_{\rm reg}}\subset\partial_S U.
$
Applying Lemma~\ref{lem:longitude-degree} to $P\circ\Phi$
and $U$, and using $\deg P=1$, we obtain
$$
 |\deg G|=|\deg(P\circ\Phi)|=|\deg\Phi|\ne0.
$$
This proves item \ref{itemiii}.
\medskip

\noindent{\bf Proof of item \ref{itemiv}.}
In Case \textup{(2)} in the proof of item \ref{itemi}, since
$$
 Y\subset\{\sigma\le b_1\},
 \qquad b_1<s_1,
$$
we may choose a compact neighborhood $\mathcal K$ of $Y$ such that
$$
 \mathcal K\Subset\{\sigma<s_1\}.
$$
On $\mathcal K$ we have $h=h_T$, and
\eqref{eq:shielding-bulk} gives
$$
 c_0:=
 \frac12\min_{\mathcal K}
 \left(
  S_n(g,f)+\frac12h^2-2|dh|
 \right)>0.
$$
In Case \textup{(1)} in the proof of item \ref{itemi}, $h=0$, and we instead set
$$
 c_0:=\frac12\min_M S_n(g,f)>0.
$$
Since $N$ is the outward unit normal of $E$ along $Y_{\rm reg}$, $Nh=dh(N)\ge-|dh|,$
and hence, in either case,
$$
 \frac12
 \left(
  S_n(g,f)+2Nh+\frac12h^2
 \right)
 \ge c_0
 \qquad\text{on }Y_{\rm reg}.
$$
By the choice of $P$ at the beginning of the proof, we may set
$$
 c_1=\min_S(H_f-D_{P\circ\Phi})\ge\frac{\delta_S}{2}>0.
$$
Since $h=0$ near $S$ and $0<\sin\theta\le1$ on
$\partial Y_{\rm reg}$, we have
$$
 \frac{H_f-D_{P\circ\Phi}+h\cos\theta}{\sin\theta}
 =\frac{H_f-D_{P\circ\Phi}}{\sin\theta}
 \ge c_1 \qquad\text{on }\partial Y_{\rm reg}.
$$
The local minimality in item \ref{itemi} gives the first variation
identities and the stability inequality used in Lemma \ref{lem:variation}.
Applying that lemma with $P\circ\Phi$ in place of its boundary map
$\Phi$, and with the corresponding $\theta$ and $G$, therefore gives
$$
 \Q_Y(v)\ge
 c_0\int_{Y_{\rm reg}}e^{-f_Y}v^2
 +
 c_1\int_{\partial Y_{\rm reg}}e^{-f_Y}v^2
$$
for every $v\in C_c^\infty(Y_{\rm reg})$.
This proves item \ref{itemiv} and completes the proof.
\end{proof}
\medskip
Now we are ready to prove Theorem \ref{thm:obstruction}.
\begin{proof}[Proof of Theorem \ref{thm:obstruction}]
Suppose, to the contrary, that there exist
$f\in C^\infty(M)$ and $\Phi:S\to\SSS^{n-1}$ satisfying \eqref{eq:obstruction-data}.

Set $m=n-1$, and let $P,\,E,\,h,\,Y$ be supplied by
Lemma \ref{lem:shielding}. Let $G$ be the longitude map of
$P\circ\Phi$ on $\partial Y_{\rm reg}$. By item \ref{itemiii} of Lemma \ref{lem:shielding},
$$
 |\deg G|=|\deg\Phi|\ne0.
$$
Fix
$$
 0<\delta\le \min\{c_0,c_1\},
$$
where $c_0,c_1$ are given by \eqref{eq:slice-margins}.

If $m=2$, then $Y$ is smooth by \cite[Lemma 4.4(i)]{WWXZ} (also see the proof of
Lemma~\ref{lem:shielding}\ref{itemiii}). Since $\deg G\ne0$, at least one connected
component $Y_0$ of $Y$ satisfies
$$
 \deg\left(G|_{\partial Y_0}\right)\ne0.
$$
Restricting \eqref{eq:slice-margins} to $Y_0$, the above choice of $\delta$ gives the spectral inequality in \cite[Lemma 3.4]{WWXZ}, with weight $-f_Y$ and coefficient $\mu=a_n>\frac12$. Hence
\cite[Lemma 3.4]{WWXZ} gives a contradiction.

Assume now that $m\ge3$. Since $m=n-1$,
$$
 a_n=\frac{m+1}{m+2}>\frac{m}{m+1}.
$$
Choose $\mu$ in the nonempty interval
$$
 \frac{m}{m+1}<\mu\le
 \min\left\{
 a_n,\,
 \frac{m}{m+1}+\frac{9}{(m+1)(m-2)^2}
 \right\},
$$
in accordance with \cite[(4.21)]{WWXZ}. Since $\mu\le a_n$,
$$
 -\mu|df_Y|_{g_Y}^2
 \ge
 -a_n|df_Y|_{g_Y}^2,
$$
so replacing $a_n$ by $\mu$ only enlarges the quadratic form. Therefore \eqref{eq:slice-margins}, with the same $\delta$ fixed above,
gives, for every $v\in C_c^\infty(Y_{\rm reg})$,
\begin{equation}\label{eq:spectral-original}
\begin{aligned}
0\le{}&
\int_{Y_{\rm reg}}e^{-f_Y}
\left[
 |dv|_{g_Y}^2+
 \left(
 \frac12\bigl(
 \Sc_{g_Y}+2\Delta_{g_Y}f_Y
 -\mu|df_Y|_{g_Y}^2
 \bigr)-\frac{\delta}{4}
 \right)v^2
\right]
\\
&+
\int_{\partial Y_{\rm reg}}e^{-f_Y}
\left(
 H_{\partial Y}^{f_Y}-D_G-\delta
\right)v^2.
\end{aligned}
\end{equation}
This is precisely the spectral inequality in
\cite[Lemma 4.6]{WWXZ}, where their function $f$
corresponds to $-f_Y$ in our notation, with
$\rho=e^{-f_Y}$ and $\widehat F=G$.

By items \ref{itemi} and \ref{itemii} of
Lemma \ref{lem:shielding}, the interface $Y$ of $E$ is compact. The auxiliary constraints are inactive
near $Y$, so $E$ is locally minimizing there.
Moreover, the contact angle is uniformly bounded away
from $0$ and $\pi$. This enables the local volume and singular set estimates in
\cite[Sections 4.3 and 5]{WWXZ} to be applied to $Y$. Together with $\deg G\ne0$ and the spectral inequality above, these properties allow us to apply the dimension reduction construction of \cite[Sections 4.3--4.7]{WWXZ}.

If $Y=Y_{\rm reg}$, take $W\equiv1$; otherwise \cite[Lemma 4.11]{WWXZ} gives a smooth positive function
$W\in C^\infty(Y_{\rm reg})$. In either case,
$$
 \widetilde g:=W^{\frac43}g_Y
$$
is complete on $Y_{\rm reg}$.

If $Y=Y_{\rm reg}$, choose antipodal regular values
$q_\pm$ of $G$. Since $D_G$ is bounded on the compact
manifold $\partial Y$, choose $\varepsilon>0$ such that
$\varepsilon\max_{\partial Y}D_G\le\delta/2$.
Using the construction of Lemma \ref{lem:spherical-tools},
choose a smooth degree-one map
$P_G:\SSS^{m-1}\to\SSS^{m-1}$
with $\operatorname{Lip}P_G\le1+\varepsilon$ such that
$P_G^{-1}(q_\pm)$ are small closed spherical caps
centered at $q_\pm$, each contained in a neighborhood
consisting entirely of regular values of $G$.
Set $\widetilde G=P_G\circ G$. Then
$$
 D_{\widetilde G}
 \le(1+\varepsilon)D_G
 \le D_G+\frac{\delta}{2}
 \qquad\text{on }\partial Y.
$$

If $Y\ne Y_{\rm reg}$, let $U$ and $K_\partial$ be as in
the proof of item \ref{itemiii} of Lemma \ref{lem:shielding}.
The map $G$ extends smoothly to a neighborhood of the compact
set $\partial_S U$ in $S$.
Together with $\dim K_\partial<m-1$, this gives
$\mathcal H^{m-1}(G(K_\partial))=0$.
Choose a regular value $q_-$ of $G|_{\partial Y_{\rm reg}}$
outside $G(K_\partial)$, and set $q_+=-q_-$.
Its preimage $(G|_{\partial Y_{\rm reg}})^{-1}(q_-)$ is compact and discrete, hence finite. Choose an open neighborhood $\mathcal C$ of $K_\partial$ in $S$
such that $\overline{\mathcal C}\cap (G|_{\partial Y_{\rm reg}})^{-1}(q_-)=\varnothing$.
Apply the construction of \cite[Lemma 4.12]{WWXZ} to
$G|_{\partial Y_{\rm reg}}$, modifying it only at points
of $\mathcal C\cap\partial Y_{\rm reg}$ and leaving it
unchanged on $\partial Y_{\rm reg}\setminus\mathcal C$.
The resulting smooth map
$G_1:\partial Y_{\rm reg}\to\SSS^{m-1}$ satisfies
$$
 G_1=G\quad\text{on }\partial Y_{\rm reg}\setminus\mathcal C,
 \qquad
 G_1=q_+\quad\text{on }\partial Y_{\rm reg}\setminus\mathcal K
$$
for some compact set $\mathcal K\subset\partial Y_{\rm reg}$.
In particular, $G_1=G$ on a neighborhood of $(G|_{\partial Y_{\rm reg}})^{-1}(q_-)$ in
$\partial Y_{\rm reg}$. Since the modification takes place
within $\SSS^{m-1}\setminus\{q_-\}$, it introduces no new
preimages of $q_-$. Thus
$$
 G_1^{-1}(q_-)
 =\bigl(G|_{\partial Y_{\rm reg}}\bigr)^{-1}(q_-).
$$
Since $G_1$ agrees with $G|_{\partial Y_{\rm reg}}$
on a neighborhood of these points, their local orientation
signs are also unchanged. Hence $\deg G_1=\deg G$.
Next apply the pole modification in \cite[Lemma 4.13]{WWXZ}
to obtain $\widetilde G:\partial Y_{\rm reg}\to\SSS^{m-1}$.
The modification has the form $\widetilde G=P_1\circ G_1$,
where $P_1:\SSS^{m-1}\to\SSS^{m-1}$ is a smooth degree-one
map satisfying $P_1(q_+)=q_+$. Hence
$\deg\widetilde G=\deg G_1$.
Moreover, $\widetilde G=P_1(q_+)=q_+$ wherever $G_1=q_+$,
so $\widetilde G$ remains equal to $q_+$ on $\partial Y_{\rm reg}\setminus\mathcal K$.

In either case, the metric $\widetilde g$, the
weight $e^{-f_Y}W^{-2(m-2)/3}$,  and the modified smooth map
$\widetilde G:\partial Y_{\rm reg}\to\SSS^{m-1}$ satisfy
the spectral inequality of \cite[Lemma 4.13]{WWXZ} and
$$
 |\deg\widetilde G|=|\deg G|\ne0.
$$
Here the degree is understood as in Section \ref{sec:preliminary}.
Set $B_\pm=\widetilde G^{-1}(q_\pm)$.
Then every connected component of $B_\pm$ has nonempty interior
in $\partial Y_{\rm reg}$, and
$\partial Y_{\rm reg}\setminus B_+$ is relatively compact.

Finally, let $\widetilde\eta$ and
$H_{\partial Y_{\rm reg}}^{\widetilde g}$
denote respectively the outward unit conormal and the mean curvature of
$\partial Y_{\rm reg}\subset(Y_{\rm reg},\widetilde g)$. Then, \cite[Lemma 4.14]{WWXZ} yields a smooth function
$\widetilde f\in C^\infty(Y_{\rm reg})$, a constant
$$
 \widetilde\mu>\frac{m-1}{m},
$$
and a constant $\delta'>0$ such that
$$
 \Sc_{\widetilde g}
 -2\Delta_{\widetilde g}\widetilde f
 -\widetilde\mu|d\widetilde f|_{\widetilde g}^2
 \ge\delta'
 \qquad\text{on }Y_{\rm reg},
$$
and
$$
 H_{\partial Y_{\rm reg}}^{\widetilde g}
-\|d\widetilde G\|_{\tr,\widetilde g}
+\widetilde\eta\widetilde f
\ge\delta'
 \qquad\text{on }\partial Y_{\rm reg}.
$$

Hence $(Y_{\rm reg},\widetilde g)$, together with
$\widetilde G$, $\widetilde f$, and $\widetilde\mu$, satisfies the
hypotheses of \cite[Theorem~3.3]{WWXZ} in dimension $m$. Therefore,
$
 \deg\widetilde G=0,
$
contradicting $\deg\widetilde G\ne0$. Hence no pair $(f,\Phi)$ can
satisfy \eqref{eq:obstruction-data}, and the theorem follows.
\end{proof}

\begin{corollary}\label{cor:positive-form}
Let $(M,g)$ be a smooth, oriented, connected
and complete Riemannian manifold of dimension $n\ge3$, with nonempty smooth compact boundary $S$. Let $\Phi:S\to\SSS^{n-1}$ be a smooth map with nonzero degree, and let $\xi\in C^\infty(S)$ satisfy
$$
 \xi\ge D_\Phi=\|d\Phi\|_{\tr}.
$$
There do not exist a smooth function $\varpi >0$ on $M$ and a constant $\varepsilon_0>0$ such that
\begin{equation}\label{eq:positive-form}
\begin{split}
 \mathcal E_\xi(v)
 &:=
 \int_M\left(|dv|^2+\frac12\Sc_gv^2\right)
 +\int_S(H_S-\xi)v^2
 \\
 &\ge
 \int_M\varpi  v^2
 +\varepsilon_0\int_Sv^2,
\end{split}
\end{equation}
for every $v\in C_c^\infty(M)$ smooth up to $S$ and not
necessarily vanishing on $S$.
\end{corollary}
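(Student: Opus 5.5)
Suppose, for contradiction, that $\varpi>0$ and $\varepsilon_0>0$ satisfy \eqref{eq:positive-form}. The plan is to convert this coercivity into the pointwise data \eqref{eq:obstruction-data} for a suitable weight $f$ and then invoke Theorem~\ref{thm:obstruction}. This is the standard passage from a positive quadratic form to a positive first eigenfunction. The eigenfunction is then used as a conformal-type weight $e^{-f}=u^{\gamma}$ in the Bi--Zhu weighted scalar curvature.

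\textbf{Step 1: a positive supersolution.} On an exhaustion $M_k$ of $M$ by compact domains containing $S$, minimize
$$
\frac{\mathcal E_\xi(v)-\int_M\varpi v^2-\tfrac{\varepsilon_0}{2}\int_S v^2}{\int_M \varpi v^2}
$$
with Dirichlet condition on the artificial boundary $\partial M_k\setminus S$. Each such minimizer is a positive eigenfunction $u_k$. It satisfies
$$
-\Delta u_k+\tfrac12\Sc_g u_k\ge \varpi u_k\quad\text{in } M_k^\circ,
\qquad
\partial_\nu u_k+(H_S-\xi)u_k\ge \tfrac{\varepsilon_0}{2}u_k\quad\text{on } S,
$$
since the nonnegativity of the Rayleigh quotient lets us absorb half the margins. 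Normalize $u_k(x_0)=1$ at a fixed point. The Harnack inequality, together with boundary Harnack for the Robin condition and elliptic estimates, gives locally smooth convergence to some $u>0$ on $M$. A strict margin survives with slightly smaller constants, for instance $\tfrac12\varpi$ in the interior. The interior margin need not be uniform, and that is allowed by Theorem~\ref{thm:obstruction}.

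\textbf{Step 2: the weighted curvatures.} Set $f=-\gamma\log u$ with $\gamma>0$ to be chosen. We have $\Delta f=-\gamma\Delta u/u+\gamma|du|^2/u^2$ and $|df|^2=\gamma^2|du|^2/u^2$. Hence
$$
S_n(g,f)=\Sc_g-2\gamma\frac{\Delta u}{u}+(2\gamma-a_n\gamma^2)\frac{|du|^2}{u^2}
\ge \Sc_g-2\gamma\frac{\Delta u}{u}.
$$
The inequality holds when $\gamma\le 2/a_n$. Choosing $\gamma=1$, this is $\ge 2(-\Delta u+\tfrac12\Sc_g u)/u\ge \varpi>0$. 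On the boundary,
$$
H_f=H_S-\nu f=H_S+\partial_\nu u/u\ge \xi+\tfrac{\varepsilon_0}{2}>D_\Phi.
$$
Thus $(f,\Phi)$ satisfies \eqref{eq:obstruction-data} with $\deg\Phi\ne0$, contradicting Theorem~\ref{thm:obstruction}.

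\textbf{Main obstacle.} The main difficulty is Step 1 on a noncompact $M$: producing a globally positive $u$ with a strict interior inequality when $\varpi$ may decay at infinity. It is crucial that only pointwise positivity of $\varpi$ is needed. The exhaustion argument must control the normalization so that $u_k$ neither blows up nor degenerates. The first thing I would try is a Harnack chain on the compact sets $M_j$, fixing the normalization at one point. For compact $M$, instead take the principal eigenfunction of the Robin problem directly.
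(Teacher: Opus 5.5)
Your proposal is correct and follows essentially the same route as the paper: exhaust by compact domains with Dirichlet conditions on the artificial boundary and the Robin condition $\partial_\nu\psi+(H_S-\xi-\varepsilon_0/2)\psi=0$ on $S$, normalize the positive first eigenfunctions at a fixed point, and pass to a positive limit via Harnack and elliptic estimates (with Hopf on $S$). Setting $f=-\log\psi$ then gives $S_n(g,f)>0$ and $H_f=\xi+\varepsilon_0/2>D_\Phi$, contradicting Theorem~\ref{thm:obstruction}. The only cosmetic difference is in how the margin is recorded: the paper puts $-\frac12\varpi$ into the potential and uses the unweighted lowest eigenvalue $\ell\ge0$, so that $S_n(g,f)=\varpi+2\ell+(2-a_n)|d\log\psi|^2$, whereas you use $\varpi$ as a weight in the Rayleigh quotient; both give the same strict pointwise inequalities.
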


\begin{proof}
Suppose, to the contrary, that there exist
$\varpi\in C^\infty(M)$ with $\varpi>0$ and a constant $\varepsilon_0>0$ satisfying \eqref{eq:positive-form}.

First suppose that $M$ is noncompact.  Choose smooth connected compact
domains $M_j$ containing a fixed collar of $S$, with
$$
 M_j\subset\operatorname{int}_M M_{j+1},
 \qquad \bigcup_{j=1}^{\infty}M_j=M,
 \qquad \partial M_j=S\sqcup\Sigma_j,
$$
where $\Sigma_j\subset M^\circ$ is the smooth artificial boundary disjoint from $S$. Let $\nu$ denote the outward unit normal of $S=\partial M$. On $M_j$, consider the operator
$$
 L:=-\Delta_g+\frac12\Sc_g-\frac12\varpi
$$
with the Robin condition
$$
 \nu\psi+
 \left(
 H_S-\xi-\frac{\varepsilon_0}{2}
 \right)\psi=0
 \qquad\text{on }S,
$$
and the Dirichlet condition
$$
 \psi=0
 \qquad\text{on }\Sigma_j.
$$
The associated quadratic form is
$$
\begin{aligned}
 \mathcal{V}_j(v)
 :={}
 \int_{M_j}
 \left(
 |dv|^2+
 \frac12(\Sc_g-\varpi)v^2
 \right)
 +
 \int_S
 \left(
 H_S-\xi-\frac{\varepsilon_0}{2}
 \right)v^2 ,
\end{aligned}
$$
defined on $\{v\in H^1(M_j):\operatorname{Tr}_{\Sigma_j}v=0\}$. The zero extension of such a function belongs to $H^1(M)$ and has
compact support. Smooth approximation in $H^1$, with support in a
fixed compact neighborhood of $M_j$, and continuity of the boundary
trace extend \eqref{eq:positive-form} to this zero extension. Identifying $v$ with its extension, we therefore obtain
$$
\begin{aligned}
 \mathcal{V}_j(v)
 &=
 \mathcal E_\xi(v)
 -\frac12\int_{M_j}\varpi v^2
 -\frac{\varepsilon_0}{2}\int_Sv^2
 \\
 &\ge
 \frac12\int_{M_j}\varpi v^2
 +\frac{\varepsilon_0}{2}\int_Sv^2
 \ge0.
\end{aligned}
$$
Hence the lowest eigenvalue $\ell_j$ of this mixed boundary problem is nonnegative. Indeed, $\ell_j\ge\min_{M_j}\varpi/2>0$. By the variational characterization and zero extension across the artificial Dirichlet boundary,
$$
\ell_{j+1}\le \ell_j.
$$
Since $\ell_j\ge0$, the sequence $\{\ell_j\}$ converges to a limit $\ell\ge0.$

Choose a nonnegative first eigenfunction $\psi_j$, which is strictly
positive on $M_j\setminus\Sigma_j$ by the strong maximum principle and the Hopf lemma. Normalize it by $\psi_j(o)=1$
at a fixed interior point $o\in M_1^\circ$. By the Harnack inequality, local elliptic estimates, and the boundary estimates on the fixed collar of $S$, after passing to a subsequence we obtain smooth
convergence on compact subsets of $M$, up to $S$, to a nonnegative smooth function $\psi$ with $\psi(o)=1$. It satisfies
$$
 L\psi=\ell\psi
 \qquad\text{on }M,
$$
and
$$
 \nu\psi+
 \left(
 H_S-\xi-\frac{\varepsilon_0}{2}
 \right)\psi=0
 \qquad\text{on }S.
$$
The strong maximum principle gives $\psi>0$ in $M^\circ$.
If $\psi$ vanished at a point of $S$, the Hopf lemma would
give $\nu\psi<0$ there, whereas the Robin condition would give
$\nu\psi=0$. Thus $\psi>0$ also on $S$. 

If $M$ is compact, the same conclusion follows directly from the first
Robin eigenfunction.

Set $f=-\log\psi.$ The equation $L\psi=\ell\psi$ gives
$$
 \frac{\Delta_g\psi}{\psi}
 =
 \frac12(\Sc_g-\varpi)-\ell.
$$
Recall $a_n=\frac{n}{n+1}$. Since
$$
 \Delta_gf
 =
 -\frac{\Delta_g\psi}{\psi}
 +|d\log\psi|_g^2,
$$
we obtain
$$
\begin{aligned}
 S_n(g,f)
 &=
 \Sc_g+2\Delta_gf-a_n|df|_g^2
 \\
 &=
 \varpi+2\ell
 +(2-a_n)|d\log\psi|_g^2
 >0
 \qquad\text{on }M.
\end{aligned}
$$

On the boundary, the Robin condition gives
$$
 \nu(\log\psi)
 =
 -H_S+\xi+\frac{\varepsilon_0}{2}.
$$
Hence,
$$
 H_f=
 H_S-\nu f=
 H_S+\nu(\log\psi) =
 \xi+\frac{\varepsilon_0}{2}
 >
 D_\Phi
 \qquad\text{on }S.
$$
Therefore,
$$
 S_n(g,f)>0,
 \qquad
 H_f>D_\Phi,
 \qquad
 \deg\Phi\ne0,
$$
which contradicts Theorem \ref{thm:obstruction}. This proves the corollary.
\end{proof}

\section{A proper Jang graph and its quadratic form}\label{sec:jang}
To apply the obstruction from Section \ref{sec:obstruction}, we need
a complete Jang graph with boundary isometric to $(S,g_S)$.
The following lemma constructs such a graph over a relatively open
subset of $X$ containing a collar of $S$ and shows that its intersection with $X\times[0,L]$ is compact for every $L\ge0$.

\begin{lemma}\label{lem:jang}
Let $(X^n,g)$ be a smooth, connected, compact Riemannian manifold of dimension $n\ge 3$, with nonempty smooth connected boundary $S=\partial X$. Assume that the mean curvature $H_S$ of $S$ in $(X,g)$,
computed with respect to the outward unit normal, satisfies
$H_S>n-1$.
Then there exist a connected relatively open set $\Omega\subset X$ containing a collar of $S$ and a smooth function
$u:\Omega\to[0,\infty)$ such that $u|_S=0$ and
\begin{equation}\label{eq:Jang}
 \tr_{g+du^2}
 \left(
 \frac{\Hess_g u}{\sqrt{1+|du|_g^2}}+g
 \right)=0
 \qquad\text{on }\Omega.
\end{equation}
Moreover, the graph
$$
 \Gamma=\{(x,u(x)):x\in\Omega\}
 \subset X\times[0,\infty)
$$
is a properly embedded hypersurface that is complete with respect to
its induced length metric. Its boundary is precisely
$
 \partial\Gamma=S\times\{0\}.
$
\end{lemma}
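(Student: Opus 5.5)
We would follow the standard Schoen--Yau construction of Jang graphs with blow-up, in the version adapted to Dirichlet data (as in the work of Schoen--Yau on positive mass, Eichmair, and the Jang-equation approach to hyperbolic quasi-local mass). The equation \eqref{eq:Jang} is Jang's equation for the initial data set $(X,g,k)$ with $k=-g$, i.e.
$$
 H_{\Gamma}=\tr_{\Gamma}k \quad\text{with }k=-g,
$$
written so that the graph has mean curvature equal to minus the trace of $g$ restricted to the graph. The first step is the regularized capillarity problem
$$
 \mathcal J(u_t):=\tr_{g+du_t^2}\Bigl(\frac{\Hess_g u_t}{\sqrt{1+|du_t|^2}}+g\Bigr)=t\,u_t \quad\text{in }X^\circ,\qquad u_t=0\ \text{on }S,
$$
for $t\in(0,1]$. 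It would be solved by the continuity method / Leray--Schauder degree. The sup bound $|u_t|\le C/t$ comes from the maximum principle: at an interior maximum the Hessian term is nonpositive, and $\tr_{g+du^2}g\le n$, giving $t\,u_t\le n$. The boundary gradient bound is where $H_S>n-1$ enters: near $S$ we build barriers $\pm\phi(d)$, $d=\operatorname{dist}(\cdot,S)$, with $\phi$ steep. For a steep graph over the level sets of $d$, the operator is dominated by the level-set mean curvature $\approx -H_S$ plus the contribution $\tr g$ restricted to an almost vertical graph, which is $\approx n-1$. Hence the condition $H_S>n-1$ makes $\phi(d)$ a supersolution and $-\phi(d)$ a subsolution. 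This is exactly the trapping condition of Schoen--Yau, which here holds strictly along $S$. Interior gradient estimates follow from the standard Jang graph estimate via the Gauss equation and Simons-type identity, giving local $C^{2,\alpha}$ bounds once we pass to $\Gamma_t$ as hypersurfaces with bounded second fundamental form.

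Second, we pass to the limit $t\to0$. The graphs $\Gamma_t$ are stable (they are graphical, so $\partial_s$ gives a positive Jacobi-type field) hypersurfaces in $X\times\R$ with uniformly bounded mean curvature. Hence they have local curvature estimates for $n\le 7$. In general we would use instead the $C$-almost minimizing property together with the observation that, for a graph, the vertical Killing field gives stability and hence smooth sheet-wise convergence (Eichmair's argument in all dimensions via the graphical positivity of $\langle\partial_s,\nu\rangle$). We then obtain a subsequential limit whose components are either graphs over open sets or vertical cylinders $\Sigma\times\R$, with $\Sigma$ a closed MOTS-type hypersurface satisfying $H_\Sigma=\pm\tr_\Sigma g=\pm(n-1)$.

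We choose $\Omega$ as the connected component of the set $\{u<\infty\}$ (after normalizing via $u_t\ge0$) that contains the collar, where uniform gradient bounds near $S$ hold independently of $t$. The key point is nonnegativity: $u_t\ge0$ requires a sign. Since $\tr_{g+du^2}g>0$, the constant $0$ is a strict subsolution, $\mathcal J(0)=n>0=t\cdot0$. Hence the maximum principle gives $u_t\ge0$, and in fact $u_t$ tends to $+\infty$ wherever it blows up, so only blow-up upward occurs. Consequently $\partial\Omega\setminus S$ consists of hypersurfaces over which $u\to+\infty$, and $\Gamma$ is asymptotic to cylinders over them. This gives properness in $X\times[0,\infty)$: $\Gamma\cap(X\times[0,L])$ lies over $\{u\le L\}$, which is compact because $u\to\infty$ at $\partial\Omega\setminus S$. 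Completeness of the induced metric follows since $\Gamma$ is properly embedded and closed in the complete space $X\times[0,\infty)$. Finally $\partial\Gamma=S\times\{0\}$ follows from $u|_S=0$ and the uniform boundary gradient bound.

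The main obstacle will be the convergence as $t\to 0$ in all dimensions $n\ge3$, including the possible singular set. We would first try Eichmair's approach, which uses the vertical translation-invariant stability of graphs to obtain regularity of the limit. We would also need to check that blow-up components never touch $S$, which the uniform boundary barriers near $S$ guarantee.
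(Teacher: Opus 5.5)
There is a genuine gap. Your outline matches the paper's: regularize by $\mathcal J(u_t)=t\,u_t$, use $0\le u_t\le n/t$, build barriers from $H_S>n-1$, let $t\to0$, and take $\Omega=\{u<\infty\}$. But the step that carries the lemma in all dimensions $n\ge3$ is left open, and the tools you propose do not deliver it.

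\begin{itemize}
\item Curvature estimates for stable hypersurfaces hold only in low dimensions.
\item The almost-minimizing route can produce singular limits in high dimensions.
\item Even a bound on the second fundamental form would not bound $|du_t|$ at bounded height, because it does not rule out nearly vertical pieces.
\end{itemize}

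Your properness argument also rests on an unproved description of $\partial\Omega\setminus S$ as hypersurfaces over which $\Gamma$ is asymptotic to cylinders. Without it, you have not shown that $u(x_j)\to+\infty$ whenever $x_j\in\Omega$ converges to a point outside $\Omega$, and that is exactly what makes $\{u\le L\}$ compact. A smaller point: your boundary barrier must dominate $u_t$ uniformly in $t$, although only $u_t\le n/t$ is known. The paper uses $U(s)=A_0\log\bigl(\delta/(\delta-s)\bigr)$, which blows up at the inner edge of the collar.

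The paper needs no blow-up analysis at all, thanks to two ideas missing from your plan.

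\emph{Monotonicity.} For $t_1<t_2$ one has $\mathcal J(u_{t_2})-t_1u_{t_2}=(t_2-t_1)u_{t_2}\ge0$, so the comparison principle gives $u_{t_2}\le u_{t_1}$. The whole family therefore converges pointwise to $u=\sup_t u_t$, with no subsequence needed. Moreover, $\{u\le L\}=\bigcap_t\{u_t\le L\}$ is closed in $X$, which gives properness and then completeness immediately.

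\emph{A one-sided interior gradient estimate.} This estimate depends only on height, not on dimension, and comes from the maximum principle. On the graph, $W=\sqrt{1+|du_t|_g^2}$ satisfies
\[
\mathcal PW\ge 2|\bar\nabla W|^2/W-\kappa W,
\]
where $\mathcal P=\Delta_{\bar g}-\langle 2W^{-1}\bar\nabla u_t,\bar\nabla\,\cdot\,\rangle$ is a drift operator and $\Ric_g\ge-\kappa g$. Test $\eta W$ with
\[
\eta=\bigl(e^{K(L+\rho-u_t-\alpha d^2)}-1\bigr)_+,
\]
where $d$ is the distance to $x_0$, $\rho$ is a fixed small radius, $\alpha=(L+\rho)/\rho^2$, and $K$ is large. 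This gives $W(x_0)\le M_L$ whenever $u_t(x_0)\le L$, using only $u_t\ge0$. The estimate yields uniform $C^k$ bounds near points of bounded height, openness of $\Omega$, and smooth convergence on $\Omega$ in every dimension.
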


\begin{proof}
We first solve the regularized Dirichlet problems and construct uniform boundary barriers. Then, we establish local gradient estimates and uniform regularity at bounded height, which allow us to pass to a smooth limit on a relatively open set containing a collar of $S$. Finally, we prove that the limiting graph is properly embedded and
complete with respect to its induced length metric.

\medskip
\noindent\textbf{Step 1: Solve the regularized Dirichlet problems.}
For $0<\tau<1$, define
$$
 \mathcal J_\tau(v)
 =
 \diver_g\left(
 \frac{\nabla_gv}{\sqrt{1+|dv|_g^2}}
 \right)
 +n-1+\frac{1}{1+|dv|_g^2}-\tau v.
$$
By \cite[Lemma 2.2]{Eichmair} and the corrected gradient calculation needed in the fixed parameter existence argument in \cite[Appendix A, Remark A.2]{EichmairMetzger}, for every $0<\tau<1$, the assumption $H_S>n-1$
gives a smooth solution $u_\tau$ of
\begin{equation}\label{eq:jg-dirichlet}
 \mathcal J_\tau(u_\tau)=0\quad\text{in }X^\circ,
 \qquad
 u_\tau=0\quad\text{on }S.
\end{equation}
This fixed parameter existence result has no upper dimensional restriction.

The operator $\mathcal J_\tau$ is strictly elliptic, with zeroth order coefficient $-\tau<0$, and therefore satisfies the comparison principle.
Comparison with the constants $0$ and $n/\tau$ yields
\begin{equation}\label{eq:jg-height-global}
 0\le u_\tau\le\frac n\tau.
\end{equation}
If $0<\tau_1<\tau_2<1$, then
$$
 \mathcal J_{\tau_1}(u_{\tau_2})
 =(\tau_2-\tau_1)u_{\tau_2}
 \ge0
 =\mathcal J_{\tau_1}(u_{\tau_1}).
$$
Since $u_{\tau_1}=u_{\tau_2}=0$ on $S$, the comparison principle also gives
\begin{equation}\label{eq:jg-monotone}
 u_{\tau_2}\le u_{\tau_1}.
\end{equation}
Thus $u_\tau$ increases as $\tau$ decreases.

\medskip
\noindent\textbf{Step 2: Uniform boundary estimates.}
In a sufficiently small inward collar of $S$, the distance function
$s(x)=\operatorname{dist}_g(x,S)$ is smooth. Let $H_s$ denote the
mean curvature of its level hypersurfaces with respect to the unit
normal $-\nabla_g s$. Then
$$
 |\nabla_gs|_g=1,
 \qquad
 \Delta_gs=-H_s.
$$
By compactness of $S$ and continuity of $H_s$ and the strict inequality
$H_S>n-1$, there are constants $\eta_0>0$ and $\delta>0$, with $\delta$
sufficiently small, such that the collar is smooth for
$0\le s\le\delta$ and
$$
H_s\ge n-1+\eta_0
\qquad\text{for all }s\in[0,\delta].
$$

For a constant $A_0>0$, set
$$
 U(s)=A_0\log\frac{\delta}{\delta-s},
 \qquad 0\le s<\delta.
$$
Writing $a=U'=A_0/(\delta-s)$, we have $U''=a^2/A_0$.
A direct calculation gives
$$
 \mathcal J_\tau(U)
 =
 \frac{U''}{(1+a^2)^{3/2}}
 -H_s\frac{a}{\sqrt{1+a^2}}
 +n-1+\frac{1}{1+a^2}-\tau U.
$$
Since $U\ge0$, $a\ge A_0/\delta$, and
$$
 \frac{U''}{(1+a^2)^{3/2}}
 \le\frac{\delta}{A_0^2},
 \qquad
 1-\frac{a}{\sqrt{1+a^2}}
 \le\frac{1}{2a^2},
$$
we obtain
$$
 \mathcal J_\tau(U)
 \le
 -\eta_0+
 \frac{\delta+\bigl((n-1+\eta_0)/2+1\bigr)\delta^2}{A_0^2}.
$$
Fix $A_0$ sufficiently large that
$\mathcal J_\tau(U)\le-\eta_0/2$ throughout the collar for every
$0<\tau<1$. Choose $s_\tau<\delta$ with
$U(s_\tau)>n/\tau$.
Note $u_\tau\le U$ holds on both boundary hypersurfaces $\{s=0\}$ and $\{s=s_\tau\}$ of the collar
$\{0\le s\le s_\tau\}$.
By the comparison principle, we have $u_\tau\le U$ on $\{0\le s\le s_\tau\}$.
For $s_\tau\le s<\delta$, the same inequality follows
from \eqref{eq:jg-height-global} and the monotonicity of $U$.
Consequently,
\begin{equation}\label{eq:jg-collar-bounds}
 0\le u_\tau\le U(s)
 \qquad\text{for }0\le s<\delta.
\end{equation}
In particular,
\begin{equation}\label{eq:jg-collar-height}
 u_\tau\le L_c:=A_0\log2
 \qquad\text{for }0\le s\le\frac\delta 2.
\end{equation}
All tangential derivatives of $u_\tau$ vanish on $S$.
Dividing $0\le u_\tau\le U(s)$ by $s$ and letting
$s\downarrow0$ therefore gives
\begin{equation}\label{eq:jg-boundary-slope}
 0\le\partial_su_\tau|_S\le\frac{A_0}{\delta},
 \qquad
 |du_\tau|_g|_S\le\frac{A_0}{\delta}.
\end{equation}

\medskip
\noindent\textbf{Step 3: A differential inequality for $W$.}
Let $G=g+dt^2$ be the product metric on $X\times\mathbb R$.
We identify the regularized graph
$\Gamma_\tau=\operatorname{graph}(u_\tau)$ with $X$ through
the parametrization $x\mapsto(x,u_\tau(x))$, and set
$$
 \bar g=g+du_\tau\otimes du_\tau,
 \qquad
 W=\sqrt{1+|du_\tau|_g^2},
 \qquad
 w=W^{-1}.
$$
Here $\bar g$ is the induced metric on $\Gamma_\tau$.
We denote the Levi--Civita connections of $\bar g$ and $G$
by $\bar\nabla$ and $D$, respectively.
For a smooth function $f$, we write $\nabla_g f$ and
$\bar\nabla_{\bar g}f$ for its gradients with respect to
$g$ and $\bar g$, respectively, and use the convention
$$
 \Delta_{\bar g}f
 =\diver_{\bar g}\bigl(\bar\nabla_{\bar g}f\bigr).
$$
Choose the downward unit normal
$$
 N=\frac{(\nabla_g u_\tau,-1)}{W}.
$$
We define the second fundamental form $A$ and the mean curvature
$H$ of $\Gamma_\tau$ with respect to the chosen unit normal $N$ by
$$
 A(Y_1,Y_2)=\langle D_{Y_1}N,Y_2\rangle_G,
 \qquad
 H=\tr_{\bar g}A,
$$
where $Y_1$ and $Y_2$ are tangent vectors to $\Gamma_\tau$.
Under the identification of $\Gamma_\tau$ with $X$ via the graph
parametrization, $A$ and $H$ are given by
$$
 A=\frac{\Hess_g u_\tau}{W},
 \qquad
 H=\diver_g\left(\frac{\nabla_g u_\tau}{W}\right).
$$

Since $t=u_\tau$ on $\Gamma_\tau$, the regularized equation
\eqref{eq:jg-dirichlet} becomes
\begin{equation}\label{eq:jg-H-height}
 H=\tau t-(n-1)-w^2\qquad\text{ on } \Gamma_\tau.
\end{equation}
By \eqref{eq:jg-height-global} and $0<w\le1$, we have
\begin{equation}\label{eq:jg-H-bound}
 -n\le H\le1-w^2,
 \qquad
 |H|\le n.
\end{equation}

The tangential projection of the vertical unit vector field
$\partial_t$ is
\begin{equation}\label{eq:jg-T}
 T=(\partial_t)^\top
   =\partial_t+wN
   =\bar\nabla_{\bar g}t.
\end{equation}
Note $|T|_{\bar g}^2=1-w^2.$
For vector fields $Y_1$ and $Y_2$ tangent to $\Gamma_\tau$, with our convention for $A$, the Gauss formula is
$$
 D_{Y_1}Y_2
 =\bar\nabla_{Y_1}Y_2-A(Y_1,Y_2)N.
$$
Since $\partial_t$ is parallel in $X\times\R$, differentiating
$\partial_t=T-wN$ along $Y_1$ and separating the tangential
and normal components gives
\begin{equation}\label{eq:tangent-normal}
 \bar\nabla_{Y_1}T=wD_{Y_1}N,
 \qquad
 Y_1(w)=-A(Y_1,T).
\end{equation}

Let $R^G$ denote the Riemann curvature tensor of
$(X\times\mathbb R,G)$, with the convention
$$
 R^G(U_1,U_2)U_3
 =
 D_{U_1}D_{U_2}U_3
 -D_{U_2}D_{U_1}U_3
 -D_{[U_1,U_2]}U_3
$$
for vector fields $U_1,\, U_2,\, U_3$ on $X\times\mathbb R$.
In a local $\bar g$-orthonormal frame $e_1,\ldots,e_n$,
the Codazzi equation is
$$
 (\bar\nabla_iA)_{jk}-(\bar\nabla_jA)_{ik}
 =
 \langle R^G(e_i,e_j)N,e_k\rangle_G.
$$
Contracting the first and third indices yields, for any
vector field $Y_1$ tangent to $\Gamma_\tau$,
\begin{equation}\label{eq:codazzicontra}
 \begin{aligned}
 (\diver_{\bar g}A)(Y_1)
 &=
 \sum_{i=1}^n(\bar\nabla_{e_i}A)(e_i,Y_1)\\
 &=
 \langle\bar\nabla_{\bar g}H,Y_1\rangle_{\bar g}
 +\Ric_G(Y_1,N),
 \end{aligned} 
\end{equation}
where $\Ric_G$ is the Ricci curvature
tensor of $(X\times\mathbb R,G)$.
Taking the divergence of $dw=-A(\,\cdot\,,T)$ and using
\eqref{eq:tangent-normal} and \eqref{eq:codazzicontra}, we obtain
\begin{equation}\label{eq:deltaw}
 \begin{aligned}
 \Delta_{\bar g}w
 &=
 -(\diver_{\bar g}A)(T)
 -\sum_{i=1}^n A(e_i,\bar\nabla_{e_i}T)\\
 &=
 -\langle\bar\nabla_{\bar g}H,T\rangle_{\bar g}
 -\Ric_G(T,N)-w|A|_{\bar g}^2.
 \end{aligned}
\end{equation}
Because $G$ is a product metric,
$\Ric_G(\partial_t,\cdot)=0$.
Writing
$$
 N_0=\frac{\nabla_g u_\tau}{W}
$$
for the horizontal component of $N$, we have
$$
 \Ric_G(T,N)
 =w\Ric_G(N,N)
 =w\Ric_g(N_0, N_0).
$$
Consequently, \eqref{eq:deltaw} becomes
\begin{equation}\label{eq:deltaw-new}
 \Delta_{\bar g}w
 =
 -\langle\bar\nabla_{\bar g}H,T\rangle_{\bar g}
 -\bigl(|A|_{\bar g}^2+\Ric_g(N_0, N_0)\bigr)w.
\end{equation}
On the other hand, taking the gradient of
\eqref{eq:jg-H-height} with respect to $\bar g$ gives
\begin{equation}\label{eq:nablaH}
 \bar\nabla_{\bar g}H
 =\tau T-2w\bar\nabla_{\bar g}w.
\end{equation}
Define the drift vector field $T_0$ and the operator $\mathcal P$ for $f\in C^2(\Gamma_\tau)$ by
\begin{equation}\label{eq:jg-P-def}
 T_0=2wT,
 \qquad
 \mathcal Pf=\Delta_{\bar g}f
    -\langle T_0,\bar\nabla_{\bar g}f\rangle_{\bar g}.
\end{equation}
Using $|T|_{\bar g}^2=1-w^2$, \eqref{eq:deltaw-new} and \eqref{eq:nablaH}, we have
$$
 \mathcal Pw
 =
 -\bigl(|A|_{\bar g}^2+\Ric_g(N_0, N_0)\bigr)w
 -\tau(1-w^2).
$$
Moreover,
\begin{equation}\label{eq:jg-drift-bound}
 |T_0|_{\bar g}^2=4w^2(1-w^2)\le1.
\end{equation}
Further, we obtain
\begin{equation}\label{eq:jg-PW-exact}
 \begin{aligned}
 \mathcal PW
 &=-w^{-2}\mathcal Pw
   +2w^{-3}|\bar\nabla_{\bar g}w|_{\bar g}^2\\
 &=\frac{2|\bar\nabla_{\bar g}W|_{\bar g}^2}{W}
   +\bigl(|A|_{\bar g}^2+\Ric_g(N_0, N_0)\bigr)W
   +\tau(W^2-1).
 \end{aligned}
\end{equation}

Since $X$ is compact, choose $\kappa\ge0$ such that
$\Ric_g\ge-\kappa g$ on $X$.
By
$$
 |N_0|_g^2=1-W^{-2}\le1
$$
we know $\Ric_g(N_0, N_0)\ge-\kappa$.
Dropping the nonnegative terms $W|A|_{\bar g}^2$ and
$\tau(W^2-1)$ from \eqref{eq:jg-PW-exact}, we conclude that
\begin{equation}\label{eq:jg-PW-inequality}
 \mathcal PW\ge
 \frac{2|\bar\nabla_{\bar g}W|_{\bar g}^2}{W}
 -\kappa W.
\end{equation}

We record the following consequence, which will be used twice below.
Let $\eta$ be a smooth function defined in a
neighborhood of an interior point $p$ of $\Gamma_\tau$.
Suppose that $\eta(p)>0$ and that $\eta W$ attains a local
maximum at $p$.
By \eqref{eq:jg-PW-inequality} we obtain, with all quantities evaluated at $p$,
\begin{equation}\label{eq:jg-max-inequality}
 \begin{aligned}
 0\ge\mathcal P(\eta W)
 &=\eta\,\mathcal P W+W\,\mathcal P\eta
   +2\langle
      \bar\nabla_{\bar g}\eta,
      \bar\nabla_{\bar g}W
     \rangle_{\bar g}\\
 &\ge W(\mathcal P\eta-\kappa\eta)
   +\frac{2\eta}{W}|\bar\nabla_{\bar g}W|_{\bar g}^2
   +2\langle
      \bar\nabla_{\bar g}\eta,
      \bar\nabla_{\bar g}W
     \rangle_{\bar g}\\
 &=W(\mathcal P\eta-\kappa\eta).
 \end{aligned}
\end{equation}
The last equality follows from the gradient relation at $p$,
which makes the two gradient terms cancel.
Since $W(p)>0$, we conclude that
\begin{equation}\label{eq:jg-max-inequality-1}
(\mathcal P\eta)(p)\le\kappa\eta(p).
\end{equation}

\medskip
\noindent\textbf{Step 4: The interior gradient estimate.}
We first estimate $\mathcal P t$ and $\mathcal P b$, where
$t$ is the vertical coordinate restricted to $\Gamma_\tau$
and $b$ is a smooth function on the base $X$, extended
independently of $t$ and then restricted to $\Gamma_\tau$.
For a smooth function $f$ on $X\times \R$, also denoting its restriction
to $\Gamma_\tau$ by $f$, the Gauss formula gives
$$
 \Hess_{\bar g}f(Y_1,Y_2)
 =
 \Hess_Gf(Y_1,Y_2)-A(Y_1,Y_2)N(f)
$$
for vector fields $Y_1,Y_2$ tangent to $\Gamma_\tau$.
Thus, in a local $\bar g$-orthonormal frame $e_1,\ldots,e_n$,
\begin{equation}\label{eq:restdelta}
 \Delta_{\bar g}f
 =
 \sum_{i=1}^n\Hess_Gf(e_i,e_i)-HN(f).
\end{equation}
Since $\Hess_Gt=0$ and $N(t)=-w$, and the restriction of $t$
to $\Gamma_\tau$ agrees with $u_\tau$ under the graph
parametrization, it follows that
\begin{equation}\label{eq:jg-delta-t}
 \Hess_{\bar g}u_\tau=wA,
 \qquad
 \Delta_{\bar g}u_\tau=wH.
\end{equation}
Using \eqref{eq:jg-H-bound}, \eqref{eq:jg-T} and
\eqref{eq:jg-drift-bound}, we obtain
\begin{equation}\label{eq:jg-Pt}
 \mathcal P u_\tau=\frac HW-2w(1-w^2),
 \qquad
 |\mathcal P u_\tau|\le n+1,
 \qquad
 \mathcal P u_\tau\ge-\frac nW-1.
\end{equation}

On the other hand, the inverse graph metric gives
\begin{equation}\label{eq:restrict}
 \langle T,\bar\nabla_{\bar g}b\rangle_{\bar g}
 =
 \frac{\langle du_\tau,db\rangle_g}{W^2}.
\end{equation}
If
$$
 |db|_g\le B_1,
 \qquad
 \|\Hess_gb\|_{\mathrm{op},g}\le B_2
$$
for nonnegative constants $B_1$ and $B_2$, then \eqref{eq:restdelta}, \eqref{eq:restrict}, and the bounds $|H|\le n$ and $|T_0|_{\bar g}\le1$, implies
\begin{equation}\label{eq:jg-Pb}
 \bigl|\langle T,\bar\nabla_{\bar g}b\rangle_{\bar g}\bigr|
 \le\frac{B_1}{W},
 \qquad
 |\mathcal P b|\le nB_2+(n+1)B_1.
\end{equation}
Here we used
$|\bar\nabla_{\bar g}b|_{\bar g}\le|db|_g$,
$|N(b)|\le|db|_g$, and
$$
 \left|\sum_{i=1}^n\Hess_Gb(e_i,e_i)\right|\le nB_2.
$$

Fix a compact set $P_1\subset X^\circ$ and a compact
neighborhood $P_2\subset X^\circ$ of $P_1$.
By compactness, we may choose $\rho>0$ such that the
$\rho$-neighborhood of $P_1$ is contained in $P_2$, and
for every $x_0\in P_2$, the closed ball $\overline{B_\rho(x_0)}$
is contained in $X^\circ$ and lies in a normal coordinate
neighborhood centered at $x_0$.
After decreasing $\rho$ if necessary, we may further assume
that the squared distance functions $d_g(x_0,\cdot)^2$
have bounded $g$-Hessians on $B_\rho(x_0)$, with a bound
independent of the center $x_0\in P_2$.

We claim that, for every $L\ge0$, there is a constant $M_L$
independent of $\tau$ such that
\begin{equation}\label{eq:jg-point-slope}
 W(x_0)\le M_L
 \quad\text{whenever }x_0\in P_2
 \text{ and }u_\tau(x_0)\le L.
\end{equation}
Fix such a point $x_0$ and write $d(x)=d_g(x_0,x)$ on
$B_\rho(x_0)$. Set
$$
 \alpha=\frac{L+\rho}{\rho^2},
 \qquad
 b=-\alpha d^2,
 \qquad
 \phi=L+\rho-u_\tau+b.
$$
The nonnegativity of $u_\tau$ and the height bound at $x_0$
give
\begin{equation}\label{eq:evaphi}
 \phi(x_0)\ge\rho,
 \qquad
 \phi\le L+\rho,
 \qquad
 \phi|_{\partial B_\rho(x_0)}=-u_\tau\le0.
\end{equation}
In \eqref{eq:jg-Pb}, we may take $B_1=2\alpha\rho$
and choose $B_2$ independently of the center $x_0\in P_2$.
Set
$$
 C_1=n+1+nB_2+(n+1)B_1.
$$
By \eqref{eq:jg-Pt} and \eqref{eq:jg-Pb}, we have
$$
 \mathcal P\phi=-\mathcal P u_\tau+\mathcal P b\ge-C_1.
$$
Moreover, since $\bar\nabla_{\bar g}u_\tau=T$,
\begin{equation}\label{eq:phiineq}
 |\bar\nabla_{\bar g}\phi|_{\bar g}^2
 =|-T+\bar\nabla_{\bar g}b|_{\bar g}^2
 \ge1-W^{-2}-\frac{2B_1}{W}.
\end{equation}
Choose
$$
 B_*=2+8B_1,
 \qquad
 \kappa_*=4C_1+2\sqrt{\kappa+1}.
$$
Whenever $W\ge B_*$, \eqref{eq:phiineq} implies
$|\bar\nabla_{\bar g}\phi|_{\bar g}^2\ge1/2$.

Now let
$$
 \eta=\max\{e^{\kappa_*\phi}-1,0\}.
$$
The continuous function $\eta W$ vanishes on
$\partial B_\rho(x_0)$ and is positive at $x_0$.
It therefore attains a positive maximum at an interior
point $q$, where $\eta$ is smooth and positive.
If $W(q)\ge B_*$, then at $q$ we have
$$
 \begin{aligned}
 \mathcal P\eta-\kappa\eta
 &=e^{\kappa_*\phi}
   \bigl(
      \kappa_*\mathcal P\phi
      +\kappa_*^2|\bar\nabla_{\bar g}\phi|_{\bar g}^2
      -\kappa
   \bigr)+\kappa\\
 &\ge
 e^{\kappa_*\phi}
 \left(\frac{\kappa_*^2}{2}-\kappa_*C_1-\kappa\right)+\kappa
 >0.
 \end{aligned}
$$
The last inequality comes from $\kappa_*^2/2-\kappa_*C_1-\kappa\ge1$.
This contradicts \eqref{eq:jg-max-inequality-1}.
Hence $W(q)<B_*$.
Using the maximality of $\eta W$ at $q$,
$\phi(x_0)\ge\rho$ and \eqref{eq:evaphi}, we conclude that
\begin{equation}\label{eq:jg-interior-estimate}
 W(x_0)
 \le
 B_*\frac{e^{\kappa_*(L+\rho)}-1}{e^{\kappa_*\rho}-1}
 =:M_L.
\end{equation}
The constant $M_L$ depends only on $L$, $P_1$, $P_2$,
and the fixed metric $g$, and is independent of $\tau$. This proves \eqref{eq:jg-point-slope}.

\medskip
\noindent\textbf{Step 5: Uniform regularity estimates at bounded height.}
We first turn the pointwise estimate \eqref{eq:jg-interior-estimate} into uniform
estimates on a neighborhood of fixed radius.
Let $P_1$, $P_2$ and $\rho$ be as in Step~4.
Suppose that $x_0\in P_1$ and $u_\tau(x_0)\le L$, and set
$$
 \varepsilon_L
 =
 \min\left\{\frac{\rho}{2},\frac{1}{2M_{L+1}}\right\}.
$$
We claim that $u_\tau\le L+1$ on $B_{\varepsilon_L}(x_0)$.
Otherwise, choose $y\in B_{\varepsilon_L}(x_0)$ such that
$u_\tau(y)>L+1$, and let $\gamma$ be a minimizing geodesic
from $x_0$ to $y$, parametrized by arc length.
Since $u_\tau(x_0)\le L$, there exists $0<\ell<\varepsilon_L$ such that
$u_\tau(\gamma(\ell))=L+1,$ and $ u_\tau(\gamma(s))<L+1$ for $0\le s<\ell.$
This segment lies in $P_2$, so \eqref{eq:jg-point-slope}
gives $|du_\tau|_g\le M_{L+1}$ along the segment $\gamma([0,\ell])$.
Consequently,
$$
 1
 \le (L+1)-u_\tau(x_0)
 =\int_0^\ell du_\tau(\dot\gamma(s))\,ds
 \le M_{L+1}\ell
 \le\frac12,
$$
which is a contradiction. Therefore, by \eqref{eq:jg-point-slope} again, we conclude that
\begin{equation}\label{eq:jg-neighborhood}
 0\le u_\tau\le L+1,
 \qquad
 |du_\tau|_g\le M_{L+1}
 \quad\text{on }B_{\varepsilon_L}(x_0).
\end{equation}

To obtain estimates up to $S$, consider the fixed collar
$$
 \mathcal C=\{0\le s\le\frac \delta 2\}\subset X.
$$
By \eqref{eq:jg-collar-height}, we have
$0\le u_\tau\le L_c$ on $\mathcal C$.
The boundary gradient estimate \eqref{eq:jg-boundary-slope}
bounds $W$ uniformly on $S$.
Since $\{s=\delta/2\}$ is a fixed compact hypersurface
in $X^\circ$, the interior gradient estimate \eqref{eq:jg-point-slope}, applied with height bound $L_c$, also bounds $W$ uniformly on this hypersurface.
Thus there exists a constant $W_*$, independent of $\tau$,
such that
$$
 W\le W_*
 \qquad\text{on }\partial\mathcal C
 =S\cup\{s=\frac\delta 2\}.
$$
Choose $\lambda>0$ such that
\begin{equation}\label{eq:lambda}
 \frac34\lambda^2-\left(\frac n2+1\right)\lambda-\kappa>0,
\end{equation}
and consider $e^{\lambda u_\tau}W$ on the graph over
$\mathcal C$.
At an interior maximum point of $e^{\lambda u_\tau}W$,
we apply \eqref{eq:jg-max-inequality} with
$\eta=e^{\lambda u_\tau}$ to obtain
$$
 \begin{aligned}
 0
 \ge \frac{\mathcal P\eta-\kappa\eta}{\eta}&=\lambda\mathcal P u_\tau
   +\lambda^2|T|_{\bar g}^2-\kappa\\
 &\ge \lambda^2(1-W^{-2})
   -\frac{\lambda n}{W}-\lambda-\kappa.
 \end{aligned}
$$
Here the equality follows from $\bar\nabla_{\bar g}u_\tau=T$, while the last inequality
uses \eqref{eq:jg-Pt} and $|T|_{\bar g}^2=1-W^{-2}$.
If $W\ge2$ at this point, the last expression is positive by \eqref{eq:lambda}, contradicting the left hand side inequality. Thus $W<2$ at any interior maximum point of
$e^{\lambda u_\tau}W$, while $W\le W_*$ at any boundary
maximum point.
Since $0\le u_\tau\le L_c$ on $\mathcal C$, the maximum
value of $e^{\lambda u_\tau}W$ is at most
$e^{\lambda L_c}\max\{2,W_*\}$. Therefore,
\begin{equation}\label{eq:jg-collar-gradient}
 W\le e^{\lambda u_\tau}W
 \le e^{\lambda L_c}\max\{2,W_*\}
 \quad\text{on }\mathcal C.
\end{equation}

The estimates \eqref{eq:jg-neighborhood} on
$B_{\varepsilon_L}(x_0)$, together with
\eqref{eq:jg-collar-height} and \eqref{eq:jg-collar-gradient}
on $\mathcal C$, give height and gradient bounds independent
of $\tau$.
In particular, these gradient bounds ensure that
\eqref{eq:jg-dirichlet} is uniformly elliptic on both sets,
with ellipticity constants independent of $\tau$.

We apply standard interior and boundary $C^{1,\alpha}$ estimates \cite[Theorem 13.6, and estimate (13.41) following Theorem 13.7]{GT} to \eqref{eq:jg-dirichlet}, using the height and gradient
bounds \eqref{eq:jg-neighborhood},
\eqref{eq:jg-collar-height}, \eqref{eq:jg-collar-gradient}, and the boundary condition $u_\tau=0$ on $S$. The resulting H\"older bounds for the coefficients allow us to apply the interior and local boundary Schauder estimates \cite[Theorem 6.2 and Corollary 6.7]{GT}.
Then, a bootstrap argument yields, for every integer $k\ge2$,
$$
 \|u_\tau\|_{C^k(\overline{B_{\varepsilon_L/2}(x_0)})}
 \le C_{k,L},
 \qquad
 \|u_\tau\|_{C^k(\{0\le s\le\delta/4\})}
 \le C_{k,S},
$$
Here all $C^k$ norms are taken with respect to $g$.
The first estimate holds whenever $x_0\in P_1$ and
$u_\tau(x_0)\le L$.
The constant $C_{k,L}$ depends only on $k$, $L$, $P_1$,
$P_2$, and $g$, while $C_{k,S}$ depends only on $k$, $g$,
$S$, and the fixed constants $\delta$, $A_0$ from Step 2.
Both constants are independent of $\tau$.

\medskip
\noindent\textbf{Step 6: Passing to the limit.} By monotonicity \eqref{eq:jg-monotone}, the pointwise limit
\begin{equation}\label{eq:jg-limit-def}
 u(x)
 =
 \lim_{\tau\to 0+}u_\tau(x)
 =
 \sup_{0<\tau<1}u_\tau(x)
 \in[0,+\infty]
\end{equation}
exists for every $x\in X$. Define
$$
 \Omega=\{x\in X:u(x)<+\infty\}.
$$
Fix $x_0\in\Omega\cap X^\circ$, and choose compact sets
$P_1$, $P_2$ as in Step 4 with $x_0\in P_1$.
Choose $L>u(x_0)$. Since
$u_\tau(x_0)\le u(x_0)<L$ for every $\tau$,
\eqref{eq:jg-neighborhood} gives
$u_\tau\le L+1$ on $B_{\varepsilon_L}(x_0)$,
where $\varepsilon_L$ is independent of $\tau$.
Taking the pointwise limit, we obtain $u\le L+1$ on $B_{\varepsilon_L}(x_0)$.
So $B_{\varepsilon_L}(x_0)\subset\Omega$.
Moreover, by \eqref{eq:jg-collar-bounds} we have
$$
 u(x)\le U(s(x))<+\infty
 \qquad\text{whenever }0\le s(x)<\delta.
$$
Thus $\Omega$ contains the collar $\{0\le s<\delta\}$
and is relatively open in $X$.

The interior and boundary regularity estimates of Step 5,
together with a finite covering argument, give uniform
bounds of every derivative order on each compact subset
of $\Omega$, including compact subsets that meet $S$.
By the Arzel\`a--Ascoli theorem and a diagonal argument,
every sequence $\{u_{\tau_j}\}$ with $\tau_j\to0$ admits
a subsequence converging smoothly on compact subsets
of $\Omega$. Every such limit agrees with $u$ by \eqref{eq:jg-limit-def}.
Thus $u$ is smooth in $\Omega\cap X^\circ$ and extends
smoothly to the boundary $S$. Since every sequence $\{u_{\tau_j}\}$ with $\tau_j\to0$
has a subsequence converging smoothly to $u$, the entire
family $\{u_\tau\}$ converges to $u$:
\begin{equation}\label{eq:jg-smooth-limit}
 u_\tau\longrightarrow u
 \quad\text{in }C^k(K)
 \quad\text{as }\tau\to 0+,
\end{equation}
for every compact subset $K\subset\Omega$ and every
integer $k\ge0$.

The $C^0$ convergence in \eqref{eq:jg-smooth-limit}
implies that $u_\tau$ is uniformly bounded on each compact
subset of $\Omega$, and hence $\tau u_\tau\to0$ uniformly
there. Passing to the limit in \eqref{eq:jg-dirichlet}
using the $C^2$ convergence in \eqref{eq:jg-smooth-limit},
we obtain
$$
 \diver_g\left(
 \frac{\nabla_g u}{\sqrt{1+|du|_g^2}}
 \right)
 +(n-1)+\frac{1}{1+|du|_g^2}=0
 \qquad\text{in }\Omega\cap X^\circ,
$$
which is equivalent to \eqref{eq:Jang}.
The nonnegativity in \eqref{eq:jg-height-global} and the
boundary condition in \eqref{eq:jg-dirichlet} also pass
to the limit, giving $u\ge0$ on $\Omega$ and $u=0$ on $S$. In fact, $u>0$ on $\Omega\cap X^\circ$.
Indeed, if $u(p)=0$ at some interior point $p$,
then $u$ attains a minimum at $p$, so
$du(p)=0$ and $\Delta_g u(p)\ge0$.
At this point, however, the limiting equation reduces to
$$
 \Delta_g u(p)+n=0,
$$
which leads to a contradiction.

\medskip
\noindent\textbf{Step 7: Properness of the limiting graph.}
We first show that $\Omega$ is connected.
By \eqref{eq:jg-limit-def}, for every finite $L\ge0$,
\begin{equation}\label{eq:jg-closed-sublevels}
 E_L
 :=
 \{x\in\Omega:u(x)\le L\}
 =
 \bigcap_{0<\tau<1}
 \{x\in X:u_\tau(x)\le L\}.
\end{equation}
Each $u_\tau$ is continuous on $X$, so the right hand
side is closed in the compact manifold $X$.
Hence $E_L$ is compact and contained in $\Omega$.

Since $S$ is connected and contained in $\Omega$,
it lies in a single connected component of $\Omega$.
If $\Omega$ is disconnected, there exists a connected component $\Omega'$ disjoint from $S$.
As a connected component of $\Omega$,
$\Omega'$ is open and closed in $\Omega$.
Choose $x_0\in \Omega'$ and set $L=u(x_0)$.
Then $E_L\cap \Omega'$ is nonempty and compact, so $u$ attains
its minimum on this set at some point $p$.
Since $u>L$ on $\Omega'\setminus E_L$, this is also a minimum
of $u$ on $\Omega'$.
Moreover, $\Omega'$ is open in $X^\circ$, so $p$ is an
interior minimum and
$$
 du(p)=0,\qquad \Delta_g u(p)\ge0.
$$
However, the limiting equation at $p$ gives
$\Delta_g u(p)=-n<0$, a contradiction.
Thus $\Omega$ is connected.

Define the limiting graph by
$$
 \Gamma
 :=
 \operatorname{graph}(u)
 =
 \{(x,u(x)):x\in\Omega\}.
$$
We equip $\Gamma$ with the metric induced by $G=g+dt^2$. The graph parametrization $x\mapsto(x,u(x))$ is a smooth
embedding, with inverse on $\Gamma$ given
by the projection onto $X$. Since $\Omega$ is relatively open in $X$ and contains
$S=\partial X$, its manifold boundary is $S$.
Together with $u=0$ on $S$, this shows that the manifold
boundary of $\Gamma$ is $S\times\{0\}$.

For every finite $L\ge0$, the set
$$
 \Gamma\cap\bigl(X\times[0,L]\bigr)
 =
 \{(x,u(x)):x\in E_L\}
$$
is the image of the compact set $E_L$ under the graph
parametrization and is therefore compact.
Now let $K\subset X\times\mathbb R$ be any compact set,
and choose a finite constant $L\ge0$ that bounds its
$t$-coordinate from above.
Since $u\ge0$ and $K$ is closed in $X\times\mathbb R$,
the intersection $\Gamma\cap K$ is a closed
subset of the compact set
$\Gamma\cap(X\times[0,L])$.
Thus $\Gamma\cap K$ is compact, which implies
$\Gamma$ is properly embedded in
$X\times\mathbb R$.

Finally, suppose that $x_j\in\Omega$ converges in $X$
to a point $x_\infty\notin\Omega$.
If $u(x_j)$ does not tend to $+\infty$, then there exist
a finite constant $L\ge0$ and a subsequence
$\{x_{j_\ell}\}$ contained in $E_L$.
Since $E_L$ is compact and closed in $X$,
we have $x_\infty\in E_L\subset\Omega$, which is a contradiction. Therefore $u(x_j)\to+\infty$.

We conclude that $\Omega$ is connected and that
$\Gamma$ is properly embedded in
$X\times\mathbb R$ with manifold boundary $S\times\{0\}$.
Moreover, $u$ tends to $+\infty$ as $x\in\Omega$
approaches any point of
$\overline{\Omega}^{X}\setminus\Omega$.

\medskip
\noindent\textbf{Step 8: Completeness of the limiting graph.}
For $p,\, q\in\Gamma$, let $d_\Gamma(p,q)$ be the infimum
of the $\bar g$-lengths of all piecewise smooth curves in
$\Gamma$ joining $p$ to $q$.
The points $p,\, q$ may lie on the boundary $S\times\{0\}$,
and the curves are allowed to meet or run along this boundary.
For every piecewise smooth curve $\gamma$ in $\Gamma$,
the vertical coordinate $t$ satisfies
$$
 |(t\circ\gamma)'|
 =
 |\langle\partial_t,\dot\gamma\rangle_G|
 \le |\dot\gamma|_G
 =
 |\dot\gamma|_{\bar g}.
$$
Integrating along such curves and taking the infimum
over all curves joining $p$ and $q$, we obtain
\begin{equation}\label{eq:jg-height-lip}
 |t(p)-t(q)|\le d_\Gamma(p,q).
\end{equation}

Let $\{p_j\}$ be a $d_\Gamma$-Cauchy sequence in $\Gamma$.
By \eqref{eq:jg-height-lip}, the sequence $\{t(p_j)\}$
is a Cauchy sequence in $\mathbb R$ and therefore bounded.
Since $u\ge0$, the sequence $\{p_j\}$ lies in
$\Gamma\cap(X\times[0,L])$ for some finite constant $L\ge0$.
This set is compact by Step 7.
Hence, there is a subsequence $\{p_{j_\ell}\}$ that
converges in the manifold topology to some $p\in\Gamma$.

Choose local coordinates $y$ near $p$, with $y(p)=0$,
whose image is a Euclidean ball if $p$ is an interior
point, or a half ball if $p\in S\times\{0\}$.
Then
$$
 |y(p_{j_\ell})|
 =
 |y(p_{j_\ell})-y(p)|
 \longrightarrow0\qquad\text{ as }\ell\to\infty.
$$
After shrinking the coordinate neighborhood,
the smoothness of the induced metric gives a
constant $C>0$ such that
$$
 \bar g\le C\sum_i(dy^i)^2.
$$
For all sufficiently large $\ell$, the straight segment
from $0$ to $y(p_{j_\ell})$ lies in the coordinate image.
Its inverse image is a curve in $\Gamma$ joining $p$
to $p_{j_\ell}$, with length at most
$\sqrt{C}\,|y(p_{j_\ell})|$.
Consequently,
$$
 d_\Gamma(p_{j_\ell},p)
 \le \sqrt{C}\,|y(p_{j_\ell})|
 \longrightarrow0.
$$
Moreover, since $\{p_j\}$ is a $d_\Gamma$-Cauchy sequence,
for every $\varepsilon>0$ there exists $N_*$ such that
$$
 d_\Gamma(p_j,p_k)<\frac{\varepsilon}{2}
 \qquad\text{for all }j,\, k\ge N_*.
$$
Choose $\ell$ sufficiently large that $j_\ell\ge N_*$ and
$d_\Gamma(p_{j_\ell},p)<\varepsilon/2$.
Then, for every $j\ge N_*$,
$$
 d_\Gamma(p_j,p)
 \le d_\Gamma(p_j,p_{j_\ell})
    +d_\Gamma(p_{j_\ell},p)
 <\varepsilon.
$$
Thus the entire sequence $\{p_j\}$ converges to $p$ in $d_\Gamma$.
We conclude that the limiting graph $\Gamma$,
including its boundary $S\times\{0\}$, is complete
with respect to its induced length metric.
\end{proof}

\begin{remark}
The properness and completeness established in
Lemma \ref{lem:jang} concern the smooth limiting graph
$\Gamma$, regarded as a manifold with boundary
$S\times\{0\}$. We also establish that $u$ is smooth
in $\Omega\cap X^\circ$ and up to $S$, so that
$\Gamma$ is a smooth manifold with boundary
$S\times\{0\}$. No regularity is asserted for the set
$\overline{\Omega}^{X}\setminus\Omega$.
\end{remark}

For the remainder of this section, let $u$, $\Omega$ and $\Gamma$ be as in Lemma \ref{lem:jang}. We identify $\Gamma$ with $\Omega$ via the graph
parametrization $\mathcal G: \Omega\to\Gamma$, where $\mathcal G$ is defined by $\mathcal G(x)=(x,u(x))$. We note the inverse of $\mathcal G$ is the projection $\pi:\Gamma\to\Omega$.
Under this identification, each tangent vector
$V\in T_x\Omega$ corresponds to
$d\mathcal G_x(V)=V+du_x(V)\partial_t\in T_{\mathcal G(x)}\Gamma$,
and covariant tensors on $\Omega$ are regarded as
tensors on $\Gamma$ via the pullback $\pi^*$.
We suppress these pushforward and pullback maps
in the notation.

Let $G=g+dt^2$ be the product metric on
$X\times\mathbb R$, and let $D$ denote its
Levi--Civita connection.
Set
$$
 W=\sqrt{1+|du|_g^2},
 \qquad
 \bar g=g+du^2.
$$
Then $\bar g$ is the induced metric on $\Gamma$. Let $\bar\nabla$ and $\nabla$ denote the Levi--Civita connections of $\bar g$ and $g$, respectively.
Choose the downward unit normal
$$
 N_\Gamma=\frac{(\nabla_g u,-1)}{W}.
$$
With the convention
$$
 A_\Gamma(Y_1, Y_2)
 :=
 \langle D_{Y_1}N_\Gamma, Y_2\rangle_G,
 \qquad Y_1,\, Y_2\in T\Gamma,
$$
the second fundamental form of $\Gamma$ is
$$
 A_\Gamma=\frac{\Hess_g u}{W}.
$$
In this notation, the Jang equation \eqref{eq:Jang},
corresponding to the initial data tensor $-g$,
takes the form
$$
 \tr_{\bar g}(A_\Gamma+g)=0.
$$
Define the one-form $q$ on $\Gamma$ by
$$
 q=(A_\Gamma+g)
 \left(\frac{\nabla_g u}{W},\,\cdot\right).
$$
In local coordinates on $\Omega$, we have
\begin{equation}\label{eq:jang-tensors}
 q_i
 =
 \frac{u^j}{W}
 \bigl((A_\Gamma)_{ij}+g_{ij}\bigr),
\end{equation}
where $u_j=\partial_j u$ and $u^j=g^{jk}u_k$.

Specializing the local Schoen--Yau scalar curvature calculation \cite[Section 2]{SchoenYau} to the initial data tensor $-g$ gives
\begin{equation}\label{eq:SY}
 \Sc_{\bar g}
 =
 \Sc_g+n(n-1)
 +|A_\Gamma+g|_{\bar g}^2
 +2|q|_{\bar g}^2
 -2\diver_{\bar g}q,
\end{equation}
here $\Sc_{\bar g}$ denotes the scalar curvature of the Jang graph with its induced metric $\bar g=g+du^2$.

Since $\partial\Gamma=S\times\{0\}$, we identify $\partial\Gamma$ with
$S$. Let $\eta_\Gamma$ denote the outward unit conormal of
$\partial\Gamma$ in $(\Gamma,\bar g)$, and let
$H_{\partial\Gamma}$ denote the mean curvature of
$\partial\Gamma\subset(\Gamma,\bar g)$ with respect to
$\eta_\Gamma$. Unless otherwise indicated, norms and inner products on $\Gamma$ are taken with respect to $\bar g$. Differential operators refer to the connection or
metric specified in their notation.

For $\xi\in C^\infty(S)$, define the quadratic form
$\mathcal E_\xi$ on $C_c^\infty(\Gamma)$ by
$$
 \mathcal E_\xi(v)
 =
 \int_\Gamma
 \left(
 |dv|_{\bar g}^2+\frac12\Sc_{\bar g}v^2
 \right)
 +\int_S(H_{\partial\Gamma}-\xi)v^2.
$$

We next transfer the scalar and mean curvature assumptions on $(X,g)$
to the complete Jang graph $\Gamma$. The Schoen--Yau identity
\eqref{eq:SY} allows the scalar curvature term in $\mathcal E_\xi$ to
be rewritten as a square together with nonnegative interior terms,
while the boundary geometry of the graph converts the lower bound for
$H_S$ into a sharp lower bound for
$H_{\partial\Gamma}-q(\eta_\Gamma)$.
\begin{lemma}\label{lem:graph-form}
Let $(X, g)$, $S$ and $\Gamma$ be as in Lemma \ref{lem:jang}.  Let $R>0$, and suppose
$$
 \Sc_g\ge-n(n-1)
 \qquad\text{and}\qquad
 H_S\ge(n-1)\coth R.
$$
Then, for every $\xi\in C^\infty(S)$ and
$v\in C_c^\infty(\Gamma)$,
\begin{equation}\label{eq:graph-square}
 \mathcal E_\xi(v)
 =
 \int_\Gamma|dv+vq|_{\bar g}^2
 +\frac12\int_\Gamma
 \left(
 \Sc_g+n(n-1)+|A_\Gamma+g|_{\bar g}^2
 \right)v^2
 +\int_S
 \left(
 H_{\partial\Gamma}-q(\eta_\Gamma)-\xi
 \right)v^2 .
\end{equation}
Moreover,
$$
 H_{\partial\Gamma}-q(\eta_\Gamma)
 \ge\frac{n-1}{\sinh R}
 \qquad\text{on }S.
$$
\end{lemma}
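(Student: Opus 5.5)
The plan is to prove the identity \eqref{eq:graph-square} by integrating the Schoen--Yau identity \eqref{eq:SY} against $v^2/2$ and completing a square. The boundary inequality then comes from computing $H_{\partial\Gamma}$ and $q(\eta_\Gamma)$ explicitly along $S$, in terms of the single quantity $\phi:=-\nu u\ge0$, and minimizing a one-variable function.

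\textbf{The identity.} Multiply \eqref{eq:SY} by $\tfrac12 v^2$ and integrate over $\Gamma$. The only term needing care is $-\int_\Gamma v^2\diver_{\bar g}q$. Since $v\in C_c^\infty(\Gamma)$, its support is compact, so no contribution arises at infinity; the divergence theorem on the manifold with boundary $(\Gamma,\bar g)$ gives
$$
 -\int_\Gamma v^2\diver_{\bar g}q
 =2\int_\Gamma v\langle dv,q\rangle_{\bar g}
 -\int_S v^2 q(\eta_\Gamma).
$$
Adding $\int_\Gamma|dv|^2_{\bar g}$, the terms $|dv|^2+2v\langle dv,q\rangle+v^2|q|^2$ combine to $|dv+vq|^2_{\bar g}$. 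The boundary piece produces the $-q(\eta_\Gamma)$ in the boundary integral, which yields \eqref{eq:graph-square}.

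\textbf{Boundary geometry.} Since $u=0$ on $S$ and $u\ge0$ in $\Omega$, we have $\nabla_g u=-\phi\nu$ on $S$ with $\phi\ge0$, so $W=\sqrt{1+\phi^2}$ there. The induced metric on $\partial\Gamma$ is $g_S$. The outward conormal of $\partial\Gamma$ in $\Gamma$ is
$$
\eta_\Gamma=\frac{\nu-\phi\,\partial_t}{W},
$$
which, viewed on $\Omega$, is $\nu/W$. Since $S\subset X\times\{0\}$ and $S$ has mean curvature vector $-H_S\nu$ in $X\times\mathbb R$, we get $H_{\partial\Gamma}=H_S\langle\nu,\eta_\Gamma\rangle=H_S/W$.

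\textbf{Computing $q(\eta_\Gamma)$.} We have
$$
q(\eta_\Gamma)=-\frac{\phi}{W^2}\Bigl(\frac{\Hess_g u(\nu,\nu)}{W}+1\Bigr),
$$
and the Jang equation determines the normal--normal Hessian. For tangential $X,Y$ one has $\Hess_g u(X,Y)=-\phi\, h_S(X,Y)$, where $h_S$ is the second fundamental form of $S$ with $\tr h_S=H_S$. Moreover $\bar g^{-1}=g^{-1}-\nabla u\otimes\nabla u/W^2$ is $g_S^{-1}$ on tangential directions and has $\nu\nu$-entry $1/W^2$. Hence \eqref{eq:Jang} on $S$ reads
$$
-\frac{\phi H_S}{W}+(n-1)+\frac1{W^2}\Bigl(\frac{\Hess_g u(\nu,\nu)}{W}+1\Bigr)=0 .
$$
Substituting gives $q(\eta_\Gamma)=-\phi\bigl(\phi H_S/W-(n-1)\bigr)$, and therefore
$$
H_{\partial\Gamma}-q(\eta_\Gamma)=\frac{H_S(1+\phi^2)}{W}-(n-1)\phi=H_S\sqrt{1+\phi^2}-(n-1)\phi .
$$

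\textbf{Conclusion and main obstacle.} Using $H_S\ge(n-1)\coth R$, it remains to minimize $F(\phi)=\coth R\sqrt{1+\phi^2}-\phi$ over $\phi\ge0$. The critical point is at $\phi/\sqrt{1+\phi^2}=\tanh R$, that is $\phi=\sinh R$, where
$$
F(\sinh R)=\frac{\cosh^2R-\sinh^2R}{\sinh R}=\frac1{\sinh R}.
$$
Since $F$ is convex, this is the minimum, giving the bound $(n-1)/\sinh R$. The hypothesis $H_S>n-1$ of Lemma~\ref{lem:jang} holds because $\coth R>1$. The step I expect to need the most care is the sign bookkeeping in the boundary computation: the conormal, the normal $N_\Gamma$, the convention $A_\Gamma=\Hess_g u/W$, and the identification of $\eta_\Gamma$ with $\nu/W$ when evaluating the tensor $A_\Gamma+g$. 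A sign error there would destroy the exact cancellation that produces $H_S W$.
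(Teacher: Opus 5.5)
Your proposal is correct and follows essentially the same route as the paper: you integrate the Schoen--Yau identity by parts and complete the square, then decompose the Jang equation on $S$ to obtain $H_{\partial\Gamma}-q(\eta_\Gamma)=H_S\sqrt{1+(\nu u)^2}+(n-1)\nu u$. The only cosmetic difference is at the last step. You restrict to $\phi=-\nu u\ge0$ and minimize $\coth R\sqrt{1+\phi^2}-\phi$ directly, whereas the paper minimizes over all real $\nu u$ to get $\sqrt{H_S^2-(n-1)^2}$ first; both give $(n-1)/\sinh R$.
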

\begin{proof}
We first prove the boundary estimate. Since $u=0$ on $S$, its tangential derivatives vanish there.  Let $\nu$ be the outward unit normal of $S=\partial X$. Using $\langle\nabla_g u,\nu\rangle_g=\nu u$, we have
$$
 \nabla_g u=(\nu u)\nu,
 \qquad
 W=\sqrt{1+(\nu u)^2}
 \qquad\text{on }S.
$$
Under the graph parametrization, the outward unit conormal of
$\partial\Gamma$ in $\Gamma$ is
$$
 \eta_\Gamma
 =
 \frac{\nu+(\nu u)\partial_t}{W}.
$$
Since the induced metric on $\partial\Gamma$ agrees with $g_S$, we have
$$
 H_{\partial\Gamma}=\frac{H_S}{W},
 \qquad
 \tr_{g_S}(A_\Gamma|_{TS\times TS})=\frac{(\nu u)H_S}{W}.
$$

Decomposing the $\bar g$-trace of the Jang equation \eqref{eq:Jang} into directions tangent to $S$ and the outward conormal direction gives
$$
 \frac{A_\Gamma(\nu,\nu)+1}{W^2}
 +(n-1)+\frac{(\nu u)H_S}{W}=0.
$$
Therefore
$$
 \frac{A_\Gamma(\nu,\nu)+1}{W^2}
 =
 -(n-1)-\frac{(\nu u)H_S}{W}.
$$
By the definition of $q$,
$$
\begin{aligned}
 q(\eta_\Gamma)
 &=
 \frac{\nu u}{W^2}
 \bigl(A_\Gamma(\nu,\nu)+1\bigr)\\
 &=
 -(n-1)\nu u
 -\frac{(\nu u)^2H_S}{W}.
\end{aligned}
$$
Consequently,
\begin{equation}\label{eq:graph-boundary}
\begin{aligned}
 H_{\partial\Gamma}-q(\eta_\Gamma)
 &=
 \frac{H_S}{W}
 +(n-1)\nu u
 +\frac{(\nu u)^2H_S}{W}\\
 &=
 H_S\sqrt{1+(\nu u)^2}+(n-1)\nu u.
\end{aligned}
\end{equation}
Since
$$
 H_S\ge(n-1)\coth R>n-1,
$$
minimizing the right hand side with respect to $\nu u\in\mathbb R$
gives
$$
 H_{\partial\Gamma}-q(\eta_\Gamma)
 \ge
 \sqrt{H_S^2-(n-1)^2}
 \ge
 \frac{n-1}{\sinh R}.
$$

We next prove \eqref{eq:graph-square}. Substituting the Schoen--Yau
identity \eqref{eq:SY} into the definition of $\mathcal E_\xi$ gives
$$
 \mathcal E_\xi(v)
 =
 \int_\Gamma
 \left(
 |dv|_{\bar g}^2
 +|q|_{\bar g}^2v^2
 -(\diver_{\bar g}q)v^2
 \right)+
 \frac12\int_\Gamma
 \left(
 \Sc_g+n(n-1)+|A_\Gamma+g|_{\bar g}^2
 \right)v^2
 +\int_S(H_{\partial\Gamma}-\xi)v^2.
$$
Since $v$ has compact support in $\Gamma$, integration by parts yields
$$
 -\int_\Gamma(\diver_{\bar g}q)v^2
 =
 2\int_\Gamma v\langle q,dv\rangle_{\bar g}
 -\int_Sq(\eta_\Gamma)v^2,
$$
with no contribution from the noncompact ends of $\Gamma$. Hence
$$
 |dv|_{\bar g}^2
 +2v\langle q,dv\rangle_{\bar g}
 +|q|_{\bar g}^2v^2
 =
 |dv+vq|_{\bar g}^2,
$$
which proves \eqref{eq:graph-square}.
\end{proof}

We will apply Corollary \ref{cor:positive-form} to the complete Jang graph $\Gamma$ to obtain a contradiction. It therefore remains to strengthen \eqref{eq:graph-square} to an estimate with a strictly positive interior weight and a strictly positive boundary term. The next lemma provides this strengthening.

\begin{lemma}\label{lem:anchor}
Let $n\ge3$, and let $(M^n,g_M)$ be a smooth, connected Riemannian manifold  with nonempty smooth compact boundary $S$. Let $\omega$ be a smooth
one-form, $V\ge0$ a smooth function on $M$, and $B\ge0$ a smooth
function on $S$. Define
$$
 \mathcal A(v)
 =
 \int_M|dv+v\omega|_{g_M}^2
 +\int_MVv^2
 +\int_SBv^2,
 \qquad \text{ for } v\in C_c^\infty(M).
$$
If $V$ or $B$ is positive somewhere, then there are a smooth function
$\varpi>0$ on $M$ and a constant $\varepsilon_0>0$ such that
\begin{equation}\label{eq:anchored-form}
 \mathcal A(v)\ge\int_M\varpi v^2+\varepsilon_0\int_Sv^2.
\end{equation}
\end{lemma}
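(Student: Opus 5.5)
The plan is to localize on compact connected pieces of $M$, where a compactness argument gives a positive lower bound, and then to sum these local bounds with summable weights. The starting point is that the integrand $|dv+v\omega|^2_{g_M}+Vv^2$ is pointwise nonnegative. Hence for every compact domain $K\subset M$ containing $S$,
$$
 \mathcal A(v)\ge \mathcal A_K(v):=\int_K|dv+v\omega|^2_{g_M}+\int_KVv^2+\int_SBv^2 .
$$
Choose an exhaustion of $M$ by smooth compact connected domains $K_1\subset K_2\subset\cdots$, each containing $S$ (if $M$ is compact, take $K_j=M$). Arrange also that $K_1$ contains a point where $V>0$, unless $B>0$ somewhere on $S$, in which case any $K_1$ works. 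Then define
$$
 \lambda_j=\inf\Bigl\{\mathcal A_{K_j}(v): v\in H^1(K_j),\ \int_{K_j}v^2+\int_Sv^2=1\Bigr\},
$$
with no boundary condition imposed on $\partial K_j\setminus S$.

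The key step is to show $\lambda_j>0$. Since $K_j$ is compact, the embedding $H^1(K_j)\hookrightarrow L^2(K_j)$ and the trace map $H^1(K_j)\to L^2(S)$ are compact, and $\mathcal A_{K_j}$ is weakly lower semicontinuous on $H^1$. Moreover $\mathcal A_{K_j}(v)\ge \tfrac12\int|dv|^2-C\int v^2$ because $\omega$ is bounded on $K_j$. Hence a minimizing sequence is bounded in $H^1$ and the infimum is attained by some $v$ with $\int_{K_j}v^2+\int_Sv^2=1$.

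Suppose $\lambda_j=0$. Then $dv+v\omega=0$ a.e. on $K_j$, together with $Vv^2=0$ and $Bv^2=0$. I expect this to be the main technical point: showing that such an $H^1$ function is either identically zero or nowhere zero. I would first get regularity. Locally $dv=-v\omega$ with $v\in H^1$ gives $v\in W^{1,p}$ by bootstrapping Sobolev embedding, hence $v$ is continuous, then $C^1$, and then smooth up to $\partial K_j$. Alternatively, one can use that $v$ is a minimizer and solves an elliptic Euler--Lagrange equation with Robin/Neumann conditions. Once $v$ is smooth, along any smooth curve $\gamma$ the function $y(s)=v(\gamma(s))$ satisfies the linear ODE $y'=-\omega(\dot\gamma)\,y$. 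By ODE uniqueness, if $v$ vanishes at one point it vanishes along every curve from that point. Since $K_j$ is connected, $v$ then vanishes identically, contradicting the normalization. So $v$ is nowhere zero, which forces $V\equiv0$ on $K_j$ and $B\equiv0$ on $S$. This contradicts the choice of $K_1\subset K_j$. Therefore $\lambda_j>0$ for every $j$.

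Finally, for each $j$ and every $v\in C_c^\infty(M)$ we have $\mathcal A(v)\ge\lambda_j\bigl(\int_{K_j}v^2+\int_Sv^2\bigr)$. Multiply by $2^{-j}$ and sum over $j$; the left side becomes $\sum_j2^{-j}\mathcal A(v)=\mathcal A(v)$. This gives
$$
 \mathcal A(v)\ge\int_M\varpi_0v^2+\varepsilon_0\int_Sv^2,\qquad \varpi_0:=\sum_j2^{-j}\lambda_j\chi_{K_j},\quad \varepsilon_0:=\sum_j2^{-j}\lambda_j .
$$
We have $\varepsilon_0\ge\lambda_1/2>0$, and $\varepsilon_0$ is finite, for instance because $\lambda_j\le\lambda_1$: the $\lambda_1$-minimizer extended by zero is an admissible test function on $K_j$. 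The function $\varpi_0$ is positive everywhere and bounded below by the positive constant $2^{-j}\lambda_j$ on each $K_j$. Hence we can choose a smooth function $0<\varpi\le\varpi_0$, for example by smoothing a positive minorant built from a partition of unity subordinate to the sets $K_{j+1}^\circ\setminus K_{j-1}$. This yields \eqref{eq:anchored-form}.
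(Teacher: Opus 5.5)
Your argument is correct and is essentially the paper's proof. Both use compactness on connected compact domains of an exhaustion, then the equation $dv=-v\omega$ with ODE uniqueness to force a null minimizer to vanish identically, then a $2^{-j}$-weighted summation; your normalization by $\int_{K_j}v^2+\int_Sv^2$ simply produces the boundary term directly, where the paper gets it from a separate trace estimate on $P_1$.

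One justification is false but inessential. Since no condition is imposed on $\partial K_1\setminus S$, the $\lambda_1$-minimizer need not vanish there, so its zero extension is not in $H^1(K_j)$, and $\lambda_j\le\lambda_1$ can in fact fail. To fix it, bound $\lambda_j$ by the Rayleigh quotient of one fixed nonzero test function vanishing near $\partial K_1\setminus S$, or simply replace $\lambda_j$ by $\min\{\lambda_j,1\}$ throughout.
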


\begin{proof}
Throughout this proof, all tensor norms
and inner products are taken with respect to $g_M$. 

By the assumption, there exist a constant $c>0$ and a nonempty set $O$
such that either $O\Subset M^\circ$ is open and $V\ge c$ on $O$,
or $O\subset S$ is relatively open and $B\ge c$ on $O$.
Choose an exhaustion
$$
 P_1\subset P_2\subset\cdots\subset M
$$
by connected compact smooth domains, with
$P_j\subset\operatorname{int}_M P_{j+1}$, such that $P_1$
contains $\overline O$ and a fixed collar of $S$.

For each $j$, define
$$
 \mathcal A_j(v)
 =
 \int_{P_j}|dv+v\omega|^2
 +\int_{P_j}Vv^2
 +\int_S Bv^2,
 \qquad v\in H^1(P_j).
$$
We claim that there is a constant $C_j>0$ such that
\begin{equation}\label{eq:claim}
 \int_{P_j}v^2\le C_j\mathcal A_j(v)
 \qquad\text{for every }v\in H^1(P_j).
\end{equation}
Fix $j$. If this estimate fails, there is a sequence
$v_k\in H^1(P_j)$ satisfying
$$
 \int_{P_j}v_k^2=1,
 \qquad
 \mathcal A_j(v_k)\longrightarrow0.
$$
The estimate
$$
 \int_{P_j}|dv_k|^2
 \le
 2\mathcal A_j(v_k)
 +2\|\omega\|_{L^\infty(P_j)}^2\int_{P_j}v_k^2
$$
shows that $\{v_k\}$ is bounded in $H^1(P_j)$.
By the Rellich-Kondrachov compactness theorem and the compactness of the trace map
\cite[Chapter~5]{Evans}, after passing to a subsequence,
$v_k$ converges weakly in $H^1(P_j)$ and strongly in $L^2(P_j)$
to a function $v$, and its traces converge strongly in $L^2(S)$
to the trace of $v$. Consequently,
$$
 \int_{P_j}v^2=1,
$$
while $\mathcal A_j(v_k)\to0$ gives
\begin{equation}\label{eq:first-order-zero}
 dv=-v\omega
\end{equation}
and $v=0$ on $O$, initially in the trace sense if $O\subset S$.

In local coordinates, by \eqref{eq:first-order-zero} we know
$\partial_\alpha v=-v\omega_\alpha$.
Since $\omega$ is smooth, these identities give $H^\ell$ regularity
for every $\ell$ by induction on smaller interior charts and boundary half-charts. By the Sobolev embedding theorem, $v$ has a representative that is smooth
up to the boundary, and vanishes pointwise on $O$. Using
this representative, and fixing a point $x_0\in O$, since $v$ is smooth up to the boundary,
we have $v(x_0)=0$, also in the case $O\subset S$.
For arbitrary $x\in P_j$, by connectedness of $P_j$, there
exists a piecewise smooth path $\gamma:[0,1]\to P_j$ such that
$$
 \gamma(0)=x_0,\qquad \gamma(1)=x.
$$
On each smooth segment of $\gamma$, by
\eqref{eq:first-order-zero} we have
$$
 \frac{d}{dt}v(\gamma(t))
 =
 dv_{\gamma(t)}(\dot\gamma(t))
 =
 -\omega_{\gamma(t)}(\dot\gamma(t))\,v(\gamma(t)).
$$
Solving this scalar linear ODE on each segment and using
continuity at the joining points, we obtain
\[
 v(x)
 =
 v(x_0)\exp\left(
 -\int_0^1\omega_{\gamma(t)}(\dot\gamma(t))\,dt
 \right)
 =0.
\]
Since $x\in P_j$ was arbitrary, it follows that $v\equiv0$
on $P_j$, contradicting $\int_{P_j}v^2=1$.
This proves the claim \eqref{eq:claim}.

For every $v\in C_c^\infty(M)$, we have
\[
\begin{aligned}
 \int_{P_1}|dv|^2
 &\le
 2\int_{P_1}|dv+v\omega|^2
 +2\|\omega\|_{L^\infty(P_1)}^2\int_{P_1}v^2\\
 &\le
 2\left(1+C_1\|\omega\|_{L^\infty(P_1)}^2\right)
 \mathcal A(v).
\end{aligned}
\]
Together with $\int_{P_1}v^2\le C_1\mathcal A(v)$,
the trace theorem on $P_1$ therefore gives
\[
\begin{aligned}
 \int_Sv^2
 &\le
 C_{\mathrm{tr}}
 \left(\int_{P_1}v^2+\int_{P_1}|dv|^2\right)\\
 &\le
 C_{\mathrm{tr}}
 \left(C_1+2+2C_1\|\omega\|_{L^\infty(P_1)}^2\right)
 \mathcal A(v)
 =:C_S\mathcal A(v).
\end{aligned}
\]
Here $C_{\mathrm{tr}}>0$ is the trace constant for the fixed
domain $P_1$, so $C_S$ is independent of $v$.

Choose a nonnegative locally finite smooth partition of unity
$\{\chi_j\}_{j\ge1}$ on $M$ with $\supp\chi_j\subset P_j$, and set
$$
 \varpi
 =
 \frac12\sum_{j\ge1}\frac{2^{-j}\chi_j}{1+C_j},
 \qquad
 \varepsilon_0=\frac1{2C_S},
$$
where $C_j$ are constants from \eqref{eq:claim}.
Local finiteness ensures that $\varpi$ is smooth, and
$\sum_j\chi_j=1$ implies $\varpi>0$ on $M$.
Since $0\le\chi_j\le1$, for every $v\in C_c^\infty(M)$ we have
$$
 \int_M\varpi v^2
 \le
 \frac12\sum_{j\ge1}
 \frac{2^{-j}}{1+C_j}\int_{P_j}v^2
 \le
 \frac12
 \left(\sum_{j\ge1}\frac{2^{-j}C_j}{1+C_j}\right)
 \mathcal A(v)
 \le\frac12\mathcal A(v).
$$
Moreover,
$$
 \varepsilon_0\int_Sv^2\le\frac12\mathcal A(v).
$$
Adding these inequalities proves \eqref{eq:anchored-form}.
\end{proof}

In the equality case, the graph equations yield $\Hess_g W=Wg$ on $X$. The following lemma establishes the corresponding rigidity result by integrating this equation along geodesics starting at a minimum point of $W$.

\begin{lemma}\label{lem:radial-closure}
Let $(X,g)$ be a smooth, connected, compact  Riemannian manifold of dimension $n\ge 3$ with nonempty smooth boundary $S$. Suppose that $W>0$ is smooth
up to $S$ and, for some constant $W_0>1$,
\begin{equation}\label{eq:radial-hypotheses}
 \Hess_g W=Wg,\qquad W^2-|dW|_g^2=1,\qquad
 W|_S=W_0,\qquad \partial_\nu W>0,
\end{equation}
where $\nu$ is the outward unit normal along $S$.
Then $(X,g)$ is isometric to the closed hyperbolic ball of radius
$\arcosh W_0$. Moreover, $W$ has a unique minimum point $o\in X^\circ$,
which corresponds to the center of the hyperbolic ball, and
$$
 W(x)=\cosh d_g(o,x)
 \qquad\text{for every }x\in X.
$$
\end{lemma}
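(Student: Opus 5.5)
The plan is to show that $W$ behaves like $\cosh$ of the distance from its minimum along every geodesic, and then to read off the hyperbolic metric in geodesic polar coordinates about that minimum.

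\textbf{Step 1: locate the interior minimum.} Since $X$ is compact, $W$ attains its minimum at some point $o$. The condition $\partial_\nu W>0$ rules out $o\in S$: moving inward from a boundary point strictly decreases $W$. Hence $o\in X^\circ$ and $dW(o)=0$. The identity $W^2-|dW|^2=1$ together with $W>0$ then gives $W(o)=1$.

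\textbf{Step 2: $W$ along geodesics from $o$.} For a unit-speed geodesic $\gamma$ starting at $o$ and remaining in $X^\circ$, the equation $\Hess_g W=Wg$ gives the ODE
$$(W\circ\gamma)''=W\circ\gamma,\qquad (W\circ\gamma)(0)=1,\qquad (W\circ\gamma)'(0)=0,$$
so $W(\gamma(s))=\cosh s$. Consequently $W\ge1$ everywhere, and $o$ is the only critical point. Indeed, at any point $x$ with $dW(x)=0$ we get $W(x)=1$, so $x$ is also a minimum point; the Hessian there is $g>0$, so every such point is isolated.

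To show uniqueness of the minimum, I would use the flow of $\nabla W/|\nabla W|$. Write $W=\cosh r$. Then $|dr|=|dW|/\sinh r=1$, because $|dW|^2=W^2-1=\sinh^2 r$. Thus $r$ is a distance-type function on the region $\{W>1\}$, with $\Hess r=\coth r\,(g-dr^2)$. Its level sets are umbilic with principal curvatures $\coth r$.

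\textbf{Step 3: the metric in polar coordinates.} Consider $\exp_o$ on the set of $(s,\theta)$ with $s<R:=\arcosh W_0$. Every unit-speed geodesic from $o$ satisfies $W=\cosh s$, and $W<W_0$ in the interior, since otherwise some interior value $W\ge W_0$ would be reached before the boundary. Let me argue this instead through $r$. The integral curves of $\nabla r$ are unit-speed geodesics (because $\Hess r(\nabla r,\cdot)=0$). Each one starts at a point of $S$, where $r=R$, and flows backward with $r$ decreasing at unit rate. It cannot stop before $r=0$: it stays in $X^\circ$ because $\partial_\nu r>0$ on $S$, and the set $\{r\ge\epsilon\}$ is compact and free of critical points. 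The curve therefore reaches a critical point, i.e.\ a minimum point, after length exactly $R$.

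Every point of $X$ lies on such a flow line, and the minimum set is finite. Near each minimum point $o_i$ the function $r$ equals $d(o_i,\cdot)$, so the radial geodesics in all directions at $o_i$ flow out to $S$. Hence the region flowing into $o_i$ is open, and these regions cover $X$ minus a finite set. Connectedness of $X$, which is preserved after removing finitely many points because $n\ge3$, then forces a single minimum $o$.

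Consequently $\exp_o:\overline{B_R(0)}\subset T_oX\to X$ is a bijection; it is a diffeomorphism away from $0$ because $r$ is smooth there. In these coordinates $g=dr^2+g_r$. Umbilicity gives $\partial_r g_r=2\coth r\,g_r$, so $g_r=\sinh^2 r\,h$, where $h$ is fixed by the behavior as $r\to0$ near $o$: smoothness at $o$ forces $h=g_{\SSS^{n-1}}$. Hence $g=dr^2+\sinh^2r\,g_{\SSS^{n-1}}$, which is the hyperbolic ball of radius $R=\arcosh W_0$. Moreover $W=\cosh r$ and $r=d_g(o,\cdot)$.

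\textbf{Main obstacle.} I expect the hardest step to be the global uniqueness of the critical point and the fact that the radial flow lines fill out $X$ exactly up to $S$. This requires the boundary condition $\partial_\nu W>0$ to rule out flow lines escaping through $S$ prematurely, together with the connectedness argument above.
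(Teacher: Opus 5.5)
Your argument is correct, and its local ingredients match the paper's: the interior minimum with $W(o)=1$, $W=\cosh s$ along geodesics from $o$, and the warped form $dt^2+\sinh^2t\,g_{\SSS}$. You derive the warped form from umbilicity of the level sets of $r=\arcosh W$ and the ODE $\partial_rg_r=2\coth r\,g_r$. The paper derives it from the Jacobi field identity $J_v=\sinh t\,P_tv$, which is the same computation in different clothing.

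The real difference is the global step. You first prove that the minimum is unique. Flow lines of $\nabla r$ are unit-speed geodesics joining the finitely many nondegenerate minima to $S$. Their basins are disjoint open sets covering $X$ minus finitely many points, and this punctured set is connected. Bijectivity of $\exp_o$ and smoothness of its inverse then come from the smooth flow.

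The paper never has to rule out several minima. It takes any minimum $o$ and shows that $\exp_o$ on $\overline{B_R(0)}$ is a local diffeomorphism of manifolds with boundary, using the computed pullback metric and $\dot\gamma_e(R)=\nu$. Injectivity takes one line, because $W$ and $\nabla_gW=\sinh t\,\dot\gamma_e$ recover both the radial parameter and the velocity. Surjectivity follows from an open--closed argument, so uniqueness of the minimum falls out at the end.

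The paper's route is shorter and avoids flow-continuity and basin-openness arguments near $S$. Yours is more dynamical but leaves routine points implicit. The main one is that every point lies on a flow line reaching $S$, equivalently $W<W_0$ in $X^\circ$. That needs the forward escape argument for $\nabla r$, or simply the maximum principle for $\Delta_gW=nW>0$ as in the paper. Your first heuristic (``otherwise some interior value $W\ge W_0$ would be reached before the boundary'') is not itself a proof.
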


\begin{proof}
Set $R=\arcosh W_0$. Since $W>0$ and
$W^2-|dW|_g^2=1$, we have $W\ge1$.
Moreover, taking the trace of $\Hess_g W=Wg$ gives
$\Delta_g W=nW>0$. The maximum principle and the boundary condition
therefore imply $W<W_0$ on $X^\circ$.
Thus $W$ attains its minimum at some point $o\in X^\circ$.
At this point $dW=0$, and hence $W(o)=1$.

For each $e\in\SSS(T_oX)$, let $\gamma_e$ be the unit speed
geodesic starting at $o$ with initial velocity $e$.
As long as $\gamma_e$ remains in $X^\circ$, the Hessian
equation gives
\begin{equation}\label{eq:radial-ode}
 (W\circ\gamma_e)''=W\circ\gamma_e,
 \qquad
 (W\circ\gamma_e)(0)=1,
 \qquad
 (W\circ\gamma_e)'(0)=0.
\end{equation}
Consequently,
$$
 W(\gamma_e(t))=\cosh t.
$$

Let $[0,T_e)$ be the maximal interval on which $\gamma_e$
is defined such that $\gamma_e([0, T_e))\subset X^\circ$.
Since $\cosh t<W_0=\cosh R$ for $t<T_e$, we have $T_e\le R$.
The unit speed condition gives
$$
 d_g(\gamma_e(s),\gamma_e(t))\le |s-t|.
$$
Hence compactness of $X$ implies that $\gamma_e(t)$ converges
to a point $p_e\in X$ as $t\uparrow T_e$.
If $p_e\in X^\circ$, the standard extension theorem for solutions of the geodesic
equation would extend $\gamma_e$ beyond $T_e$ inside $X^\circ$. This contradicts the maximality of $T_e$. Thus $p_e\in S$, and by continuity we have
$$
 \cosh T_e=W(p_e)=W_0=\cosh R.
$$
It follows that $T_e=R$.
Setting $\gamma_e(R)=p_e$, we obtain
$$
 \gamma_e([0,R))\subset X^\circ,
 \qquad
 \gamma_e(R)\in S.
$$
A smooth extension of $g$ across $S$ provides smooth continuation
through this endpoint and smooth dependence on the initial
direction $e$ up to $t=R$.

The exponential map therefore defines a smooth map
$$
 E:\overline{B_R(0)}\subset T_oX\longrightarrow X,
 \qquad E(z)=\exp_o(z),
$$
where $B_R(0)$ is the ball of radius $R$ centered at the origin in $T_oX$ equipped with the inner product $g_o$.
It satisfies
$$
 E(te)=\gamma_e(t),\qquad
 E(B_R(0))\subset X^\circ,\qquad
 E(\partial B_R(0))\subset S.
$$

Along each radial geodesic, we have
\begin{equation}\label{eq:radial-gradient}
 \nabla_g W(\gamma_e(t))=\sinh t\,\dot\gamma_e(t).
\end{equation}
Indeed, writing $\nabla_t$ for covariant differentiation along
$\gamma_e$, the Hessian and geodesic equations give
$$
 \nabla_t\bigl(\nabla_g W-\sinh t\,\dot\gamma_e\bigr)
 =(W-\cosh t)\dot\gamma_e=0,\qquad\text{ whenever } \gamma_e\subset X^\circ.
$$
The difference is therefore parallel along $\gamma_e$.
Since the difference vanishes at $t=0$, \eqref{eq:radial-gradient} holds
for $0\le t<R$ and extends to $t=R$ by continuity.

Fix $e\in\SSS(T_oX)$ and $v\in T_e\SSS(T_oX)$.
Choose a smooth curve $e(s)$ in $\SSS(T_oX)$ with
$e(0)=e$ and $e'(0)=v$, and consider the geodesic variation
$\mathcal I(s,t)=E(te(s))$.
Its variational Jacobi field is
$$
 J_v(t)
 =\partial_s\mathcal I(0,t)
 =(dE)_{te}(tv),
$$
with initial conditions
$$
 J_v(0)=0,\qquad \nabla_tJ_v(0)=v.
$$
Differentiating \eqref{eq:radial-gradient} along this variation,
and using
$\nabla_s\partial_t\mathcal I=\nabla_t\partial_s\mathcal I$, we have
$$
 \cosh t\,J_v
 =\nabla_{J_v}\nabla_g W
 =\sinh t\,\nabla_tJ_v
 \qquad\text{for }t>0.
$$
Hence $\nabla_t(J_v/\sinh t)=0$, and the initial conditions yield
\begin{equation}\label{eq:radial-jacobi}
 J_v(t)=\sinh t\,P_tv,
\end{equation}
where $P_t:T_oX\to T_{\gamma_e(t)}X$ denotes parallel transport
along $\gamma_e$.

We now compute the pullback metric.
For $t>0$, the differential of the polar coordinate map
$(t,e)\mapsto te$ sends a tangent vector $(a,v)$ to $ae+tv$.
Since $\dot\gamma_e(t)=P_te$, we have
$$
 (dE)_{te}(ae+tv)
 =a\dot\gamma_e(t)+J_v(t)
 =aP_te+\sinh t\,P_tv.
$$
Note parallel transport preserves inner products, and
$v,w\in T_e\SSS(T_oX)$ are perpendicular to $e$.
Thus, for $a,b\in\mathbb R$,
$$
 (E^*g)_{te}(ae+tv,be+tw)
 =ab+\sinh^2t\,g_o(v,w).
$$
Consequently, in polar coordinates $z=te$ on the punctured ball,
\begin{equation}\label{eq:radial-metric}
 E^*g
 =dt^2+\sinh^2t\,g_{\SSS(T_oX)}
 \qquad\text{for }0<t\le R,
\end{equation}
where $g_{\SSS(T_oX)}$ is the round metric induced by $g_o$.

The right hand side of \eqref{eq:radial-metric} is positive
definite for $t>0$, so $dE$ is invertible away from the origin.
At the origin, $dE_0=\mathrm{id}_{T_oX}$.
On $S$, the constant boundary value of $W$ gives
$\nabla_g W=(\partial_\nu W)\nu$.
Together with \eqref{eq:radial-gradient} and
$\partial_\nu W>0$, this implies
$$
 \dot\gamma_e(R)=\nu_{\gamma_e(R)}.
$$
Thus $dE$ maps the outward radial direction along
$\partial B_R(0)$ to the outward normal of $S$.
Since $E$ maps boundary to boundary and interior to interior,
the inverse function theorem in boundary charts shows that
$E$ is a local diffeomorphism at boundary points as well.
Therefore $E$ is a local diffeomorphism of manifolds with boundary.

To prove injectivity, suppose that
$$
 E(te_1)=E(se_2)=x,
 \qquad 0\le t,s\le R.
$$
Since $W(E(te_1))=\cosh t$ and $W(E(se_2))=\cosh s$,
the equality $E(te_1)=E(se_2)$ implies $t=s$.
If $t=s=0$, both points in the domain are the origin.
If $t=s>0$, equation \eqref{eq:radial-gradient} gives
$$
 \dot\gamma_{e_1}(t)
 =\frac{\nabla_g W(x)}{\sinh t}
 =\dot\gamma_{e_2}(t).
$$
Uniqueness for the geodesic equation, applied backwards from
this common endpoint and velocity, yields $e_1=e_2$.
Thus $E$ is injective.

The image of $E$ is closed in $X$ because its domain is compact,
and it is relatively open because $E$ is a local diffeomorphism,
including at boundary points.
Since $X$ is connected, $E$ is surjective.
It follows that $E$ is a diffeomorphism.
Equation \eqref{eq:radial-metric} then identifies $(X,g)$
with the closed hyperbolic ball of radius $R$.

Under this isometry, $o$ corresponds to the center of the
hyperbolic ball, and the radial parameter equals the distance
from $o$. Hence
$$
 W(x)=\cosh d_g(o,x)
 \qquad\text{for every }x\in X.
$$
In particular, $W(o)=1$ and $W(x)>1$ for $x\ne o$, so $o$
is the unique minimum point of $W$.
\end{proof}

\section{Proof of the main theorem}\label{sec:mainproofs}

We now combine the quadratic form obstruction of
Section \ref{sec:obstruction} with the Jang graph identities of Section \ref{sec:jang} to prove Theorem \ref{thm:hyperbolic}. The rigidity argument reduces to the Hessian equation treated
in Lemma \ref{lem:radial-closure}. We then prove Corollary \ref{cor:hyper-map} using \eqref{eq:degree-volume} and the radial extension of round sphere isometries to the hyperbolic ball.

\begin{proof}[Proof of Theorem \ref{thm:hyperbolic}]
Set
$$
 r=\sinh R,\qquad D_0=\frac{n-1}{r}.
$$
We will prove
\begin{equation}\label{eq:hyper-threshold}
 \rad(S,g_S)\ge r
 \quad\Longrightarrow\quad
 (X,g)\text{ is isometric to }\overline{B_{\HH^n}(R)}.
\end{equation}
The round sphere of radius $r$ has hyperspherical radius $r$. Indeed, the identity map gives the lower bound, and
\eqref{eq:degree-volume} gives the upper bound.
Thus \eqref{eq:hyper-threshold} implies both the comparison
and the characterization of equality.

Assume that $\rad(S,g_S)\ge r$.
Since $H_S\ge(n-1)\coth R>n-1$, Lemma~\ref{lem:jang}
provides a connected relatively open set $\Omega\subset X$
containing a collar of $S$ and a smooth function
$u:\Omega\to[0,\infty)$ with $u|_S=0$.
Its graph
$$
 \Gamma=\{(x,u(x)):x\in\Omega\}
$$
is properly embedded in $X\times[0,\infty)$ and complete with
respect to its induced metric.
We identify $\Gamma$ with $\Omega$ by the graph parametrization
and use the notation of Section \ref{sec:jang}:
$$
 W=\sqrt{1+|du|_g^2},\qquad
 \bar g=g+du^2,\qquad
 A_\Gamma=\frac{\Hess_g u}{W},
$$
with $q$ as in \eqref{eq:jang-tensors}.
The induced metric on $\partial\Gamma=S$ is $g_S$.

Define
$$
 V=\frac12\left(
       \Sc_g+n(n-1)+|A_\Gamma+g|_{\bar g}^2
     \right),
 \qquad
 B=H_{\partial\Gamma}-q(\eta_\Gamma)-D_0.
$$
Here $\Sc_g$ on $\Gamma$ denotes $\Sc_g\circ\pi$,
where $\pi:\Gamma\to\Omega$ is the graph projection.
The scalar curvature assumption and Lemma \ref{lem:graph-form}
imply $V\ge0$ on $\Gamma$ and $B\ge0$ on $S$.
Moreover, \eqref{eq:graph-square} becomes
$$
 \mathcal E_{D_0}(v)
 =
 \int_\Gamma|dv+vq|_{\bar g}^2
 +\int_\Gamma Vv^2
 +\int_S Bv^2
 \qquad
 \text{for every }v\in C_c^\infty(\Gamma).
$$

We claim that $V$ and $B$ vanish identically.
Assume, to the contrary, that at least one of $V$ and $B$
is not identically zero.
Applying Lemma \ref{lem:anchor} with $\omega=q$ gives
a smooth function $\varpi>0$ on $\Gamma$ and a constant
$\varepsilon_0>0$ such that
\begin{equation}\label{eq:epsD}
 \mathcal E_{D_0}(v)
 \ge
 \int_\Gamma\varpi v^2+\varepsilon_0\int_Sv^2
 \qquad
 \text{for every }v\in C_c^\infty(\Gamma).
\end{equation}
With $\varepsilon_0$ as above, choose $r'\in(0,r)$ sufficiently close
to $r$ that
\begin{equation}\label{eq:boundary-map-choice}
 0<\xi-D_0<\frac{\varepsilon_0}{2},\qquad \text{ where }\xi=\frac{n-1}{r'}.
\end{equation}
Since $r'<r\le\rad(S,g_S)$, definition~\eqref{eq:radius}
provides a radius $r_1>r'$ and a smooth map
$F_1:(S,g_S)\to\SSS^{n-1}(r_1)$ with
$\deg F_1\ne0$ and $\Lip F_1\le1$.
Composing $F_1$ with the radial dilation from $\SSS^{n-1}(r_1)$ onto $\SSS^{n-1}(r')$, we obtain the smooth map $F:
 (S,g_S)\rightarrow\SSS^{n-1}(r')$ defined by
$$
 F=\frac{r'}{r_1}F_1,
$$
which satisfies
$$
 \deg F=\deg F_1\ne0,
 \qquad
 \Lip F\le\frac{r'}{r_1}<1.
$$
The normalized map
$$
 \Phi=\frac{F}{r'}:S\longrightarrow\SSS^{n-1}
$$
has nonzero degree and satisfies $\Lip\Phi\le1/r'$.
Thus every singular value of $d\Phi$ is at most $1/r'$, and
$$
 D_\Phi=\|d\Phi\|_{\tr}\le\frac{n-1}{r'}=\xi.
$$
Using the definition of $\mathcal E_\xi$, \eqref{eq:epsD} and
\eqref{eq:boundary-map-choice}, we obtain
$$
 \begin{aligned}
 \mathcal E_\xi(v)
 &=
 \mathcal E_{D_0}(v)-(\xi-D_0)\int_Sv^2\\
 &\ge
 \int_\Gamma\varpi v^2
 +\bigl(\varepsilon_0-(\xi-D_0)\bigr)\int_Sv^2\\
 &\ge
 \int_\Gamma\varpi v^2
 +\frac{\varepsilon_0}{2}\int_Sv^2
 \end{aligned}
$$
for every $v\in C_c^\infty(\Gamma)$.

The manifold $(\Gamma,\bar g)$ is connected, oriented, and
complete, with nonempty smooth compact boundary $S$.
Since $\deg\Phi\ne0$ and $\xi\ge D_\Phi$, the preceding
inequality contradicts Corollary~\ref{cor:positive-form}.
This proves that $V\equiv0$ and $B\equiv0$.
Then the two nonnegative terms defining $V$
vanish separately. Using the definition of $q$ and  $B=0$, we obtain
\begin{equation}\label{eq:vanishing}
 \Sc_g=-n(n-1),\qquad
 A_\Gamma=-g,\qquad
 q=0
 \quad\text{on }\Gamma,
 \qquad
 B=0\quad\text{on }S.
\end{equation}

We now recover the geometry of $X$.
Since $A_\Gamma=\Hess_g u/W$, we have
$$
 \Hess_g u=-Wg
 \qquad\text{on }\Omega.
$$
Differentiating $W^2=1+|du|_g^2$ yields
$$
 WdW
 =\Hess_g u(\nabla_g u,\cdot)
 =-Wdu.
$$
Hence $dW=-du$, and connectedness of $\Omega$ gives a constant
$W_0$ such that
\begin{equation}\label{eq:W-plus-u}
 W+u=W_0
 \qquad\text{on }\Omega.
\end{equation}
In particular, $W|_S=W_0$ because $u|_S=0$.

Since $u\ge0$ and $W\ge1$, equation \eqref{eq:W-plus-u} gives
$$
 0\le u\le W_0-1.
$$
Thus $\Gamma$ is contained in the compact slab
$X\times[0,W_0-1]$.
The inclusion of $\Gamma$ into $X\times[0,\infty)$ is proper,
so $\Gamma$ itself is compact.
Its projection $\Omega$ is therefore compact and hence closed
in $X$.
Since $\Omega$ is also nonempty and relatively open,
connectedness of $X$ implies $\Omega=X$.

The functions $u$ and $W$ are consequently smooth on all of $X$,
up to $S$. From $dW=-du$ and $\Hess_g u=-Wg$, we obtain
\begin{equation}\label{eq:hyper-hessian}
 \Hess_g W=Wg,\qquad
 W^2-|dW|_g^2=1,\qquad
 W|_S=W_0.
\end{equation}
Taking the trace of the equation for $u$ gives
$\Delta_g u=-nW<0$.
The strong maximum principle and the Hopf lemma,
together with $u|_S=0$, imply
$$
 u>0\quad\text{in }X^\circ,
 \qquad
 \partial_\nu u<0\quad\text{on }S,
$$
where $\nu$ is the outward unit normal.
It follows from $W+u=W_0$ and $W\ge1$ that $W_0>1$, and
$$
 \partial_\nu W=-\partial_\nu u>0.
$$
Since $W$ is constant on $S$, its tangential derivatives vanish.
The second identity in \eqref{eq:hyper-hessian} therefore gives
$$
 \partial_\nu W=\sqrt{W_0^2-1}
 \qquad\text{on }S.
$$

Let $A_S$ denote the second fundamental form of $S$ with respect
to $\nu$, with convention
$A_S(Y_1, Y_2)=g(\nabla_{Y_1}\nu, Y_2)$.
The constant boundary value of $W$ gives
$$
 W_0g_S
 =\Hess_g W|_{TS}
 =(\partial_\nu W)A_S.
$$
Taking the trace, we obtain
$$
 H_S=\frac{(n-1)W_0}{\sqrt{W_0^2-1}}.
$$
On the other hand, $q=B=0$ implies $H_{\partial\Gamma}=D_0$.
The boundary calculation in Lemma~\ref{lem:graph-form} gives
$H_{\partial\Gamma}=H_S/W_0$, and hence
$$
 \frac{n-1}{\sinh R}
 =D_0
 =\frac{H_S}{W_0}
 =\frac{n-1}{\sqrt{W_0^2-1}}.
$$
Since $W_0>1$, this yields $W_0=\cosh R$.

All hypotheses of Lemma~\ref{lem:radial-closure} now hold.
It follows that $(X,g)$ is isometric to the closed hyperbolic
ball of radius $\arcosh W_0=R$.
This proves \eqref{eq:hyper-threshold} and completes the proof.
\end{proof}

We now prove Corollary~\ref{cor:hyper-map} by combining
Theorem~\ref{thm:hyperbolic} with the degree formula and
a radial extension of the resulting boundary isometry.

\begin{proof}[Proof of Corollary~\ref{cor:hyper-map}]
Set $r=\sinh R$.
Since the given map $F$ has nonzero degree and $\Lip F\le1$,
the definition of the hyperspherical radius gives
$\rad(S,g_S)\ge r$.
Theorem \ref{thm:hyperbolic} then gives
$\rad(S,g_S)=r$ and an isometry
$$
 I:(X,g)\longrightarrow\overline{B_{\HH^n}(R)}.
$$
Let $I_S=I|_S$.
The boundary of the hyperbolic ball, with its induced metric,
is isometric to $\SSS^{n-1}(r)$. Hence
$$
 \Vol_{g_S}(S)=\Vol(\SSS^{n-1}(r)).
$$

We first prove that $F$ is an isometry.
Let $\sigma_1(x),\ldots,\sigma_{n-1}(x)$ be the singular values
of $dF_x$ with respect to the given metrics.
Since $\Lip F\le1$, we have $0\le\sigma_i(x)\le1$, and therefore
$$
 J_F(x):=|\operatorname{Jac}F(x)|
 =\prod_{i=1}^{n-1}\sigma_i(x)\le1.
$$
The degree formula gives
\begin{align*}
 |\deg F|\,\Vol(\SSS^{n-1}(r))
 &\le \int_S J_F\,d\mu_{g_S}\\
 &\le \Vol_{g_S}(S)
 =\Vol(\SSS^{n-1}(r)).
\end{align*}
Since $|\deg F|\ge1$, equality must hold in both
inequalities above. In particular,
$$
 |\deg F|=1,
 \qquad
 \int_S(1-J_F)\,d\mu_{g_S}=0.
$$
The function $1-J_F$ is continuous and nonnegative, so
$J_F\equiv1$.
As every singular value is at most one, this forces
$\sigma_i(x)=1$ for every $i$ and every $x\in S$.
Thus $dF_x$ preserves inner products at every point, and
$F$ is a local isometry.

To see that $F$ is bijective, note that its signed
Jacobian is nowhere zero and has constant sign because
$S$ is connected.
Every point of the target is a regular value, and its
preimage is finite by compactness of $S$.
The degree formula for regular values therefore gives
$$
 \#F^{-1}(y)=|\deg F|=1
 \qquad
 \text{for every }y\in\SSS^{n-1}(r).
$$
Consequently, $F$ is a bijective local isometry and hence
a global isometry.

We now construct the required extension.
Using the radial isometry
$\iota:\SSS^{n-1}(r)\to\partial B_{\HH^n}(R)$
from the statement, define
$$
 A:=\iota\circ F\circ I_S^{-1}
 :
 \partial B_{\HH^n}(R)
 \longrightarrow
 \partial B_{\HH^n}(R).
$$
Thus, $A$ is an isometry of $\partial B_{\HH^n}(R)$
with respect to the metric induced by the hyperbolic
metric.
Let $o_{\HH}$ be the center of the hyperbolic ball, and denote
the radial parametrization of its boundary by
$$
 \Psi(e)=\exp_{o_{\HH}}(Re),
 \qquad e\in\SSS(T_{o_{\HH}}\HH^n).
$$
Note 
$\Psi^*g_{\partial B_{\HH^n}(R)}
 =\sinh^2R\,g_{\SSS(T_{o_{\HH}}\HH^n)}
 =r^2g_{\SSS(T_{o_{\HH}}\HH^n)}.$
Set
$$
 a=\Psi^{-1}\circ A\circ\Psi.
$$
Since $\Psi\circ a=A\circ\Psi$ and $A$ is an isometry,
$$
 r^2a^*g_{\SSS(T_{o_{\HH}}\HH^n)}
 =\Psi^*\bigl(A^*g_{\partial B_{\HH^n}(R)}\bigr)=\Psi^*g_{\partial B_{\HH^n}(R)}
 =r^2g_{\SSS(T_{o_{\HH}}\HH^n)}.
$$
Dividing by $r^2$, we conclude that $a$ is an isometry
of the round sphere.

Writing $d_{\SSS}$ for the distance on this unit sphere,
the spherical distance formula gives, for any unit vectors
$e$ and $\tilde{e}$,
\begin{equation}\label{eq:innerproa}
 \langle a(e),a(\tilde{e})\rangle
 =\cos d_{\SSS}(a(e),a(\tilde{e}))
 =\cos d_{\SSS}(e,\tilde{e})
 =\langle e,\tilde{e}\rangle.
\end{equation}
Here $\langle\cdot, \,\cdot\rangle$ denotes the inner product on $T_{o_{\HH}}\HH^n$ induced by the hyperbolic metric $g_{\HH^n}$ at $o_{\HH}$.
Choose an orthonormal basis $e_1,\ldots,e_n$ of
$T_{o_{\HH}}\HH^n$ and define the linear map
$$
 Q:T_{o_{\HH}}\HH^n\longrightarrow T_{o_{\HH}}\HH^n,
 \qquad Qe_i=a(e_i).
$$
\eqref{eq:innerproa} shows that
$a(e_1),\ldots,a(e_n)$ is an orthonormal basis, so $Q$ is
orthogonal. For every unit vector $e$, expansion in this
image basis gives
\begin{align*}
 a(e)
 =\sum_{i=1}^n
   \langle a(e),a(e_i)\rangle\,a(e_i)=\sum_{i=1}^n
   \langle e,e_i\rangle\,a(e_i)
 =Qe.
\end{align*}
Since $A\circ\Psi=\Psi\circ a$, it follows that
$$
 A\bigl(\exp_{o_{\HH}}(Re)\bigr)
 =\exp_{o_{\HH}}(RQe)
 \qquad
 \text{for every }e\in\SSS(T_{o_{\HH}}\HH^n).
$$

We extend $A$ to the closed hyperbolic ball by setting
$$
 \widetilde A\bigl(\exp_{o_{\HH}}z\bigr)
 =\exp_{o_{\HH}}(Qz),
 \qquad |z|\le R.
$$
Since $Q$ preserves norms, the image lies in the same closed
ball. The inverse is given by
$$
 \widetilde A^{-1}\bigl(\exp_{o_{\HH}}z\bigr)
 =\exp_{o_{\HH}}(Q^{-1}z),
 \qquad |z|\le R.
$$
In normal coordinates centered at $o_{\HH}$, the maps
$\tilde A$ and $\tilde A^{-1}$ are represented by the
linear maps $z\mapsto Qz$ and $z\mapsto Q^{-1}z$,
respectively.
Thus $\widetilde A$ is a smooth diffeomorphism of the closed
ball, including at the center.

For $0<t\le R$ and $e\in\SSS(T_{o_{\HH}}\HH^n)$,
the identity $Qe=a(e)$ gives
$$
 \widetilde A\bigl(\exp_{o_{\HH}}(te)\bigr)
 =\exp_{o_{\HH}}\bigl(ta(e)\bigr).
$$
Thus $\widetilde A$ preserves the radial parameter and
transforms the angular variable by $a$.
Since $a$ preserves the round metric, the hyperbolic
metric $g_{\HH^n}$ satisfies, for $0<t\le R$,
\begin{align*}
 \widetilde A^*g_{\HH^n}
 &=dt^2+\sinh^2t\,a^*g_{\SSS(T_{o_{\HH}}\HH^n)}\\
 &=dt^2+\sinh^2t\,g_{\SSS(T_{o_{\HH}}\HH^n)}
 =g_{\HH^n}.
\end{align*}
Both $\widetilde A^*g_{\HH^n}$ and $g_{\HH^n}$ are smooth
at the center, so we also have $\widetilde A^*g_{\HH^n}=g_{\HH^n}$ at $o_{\HH}$ by continuity.
Hence $\widetilde A$ is an isometry of the closed hyperbolic ball. It is clear that
$$
 \widetilde A|_{\partial B_{\HH^n}(R)}=A.
$$

Finally, define
$$
 \overline F=\widetilde A\circ I.
$$
As a composition of isometries, $\overline F$ is an isometry
from $(X,g)$ onto $\overline{B_{\HH^n}(R)}$.
Moreover,
$$
 \overline F|_S
 =A\circ I_S
 =\iota\circ F
$$
This completes the proof.
\end{proof}

Corollary \ref{cor:euclidean-radius} follows from Theorem \ref{thm:hyperbolic} and \cite[Theorem 1.4]{WWXZ}, with an additional regularity argument in the equality case.
Indeed, the hyperspherical radius is a priori attained only by a Lipschitz map, whose smoothness must be established before applying \cite[Theorem 1.4]{WWXZ}.
\begin{proof}[Proof of Corollary~\ref{cor:euclidean-radius}]
Applying the sharp mean curvature estimate
\eqref{eq:gromov-hyperbolic}, which follows from
Theorem \ref{thm:hyperbolic}, to $\varepsilon g$ gives
$$
 \frac{n-1}{r}\le\inf_S H_S
 \le(n-1)\sqrt{\epsilon+\rad(S,g_S)^{-2}}
 \qquad\text{for every }\varepsilon>0.
$$
Indeed, $\Sc_{\varepsilon g}=\varepsilon^{-1}\Sc_g\ge0$, while
$H_S^{\varepsilon g}=\varepsilon^{-1/2}H_S$ and
$\rad(S,\varepsilon g_S)=\sqrt{\varepsilon}\,\rad(S,g_S)$.
Letting $\varepsilon\downarrow0$ proves $\rad(S,g_S)\le r$.

Suppose now that $\rad(S,g_S)=r$. Replacing $g$ by $r^{-2}g$, we may assume that $r=1$. We first extend the quadratic form obstruction of Corollary \ref{cor:positive-form} to Lipschitz boundary maps. For a Lipschitz map $\Psi:S\to\SSS^{n-1}$, let $D_\Psi=\|d\Psi\|_{\tr}$ almost everywhere and define
$$
\begin{aligned}
 Q_\Psi(v)&=
 \int_X\left(|dv|_g^2+\frac12\Sc_gv^2\right)
 +\int_S(H_S-D_\Psi)v^2,\\
 \lambda(\Psi)&=
 \inf_{0\ne v\in H^1(X)}\frac{Q_\Psi(v)}{\|v\|_{L^2(X)}^2}.
\end{aligned}
$$
Boundary values of $v$ are understood as traces. The trace interpolation inequality bounds $Q_\Psi$ below by a multiple of $-\|v\|_{L^2(X)}^2$. By the direct method in the calculus of variations, using the compactness of the Sobolev embedding and the trace map, the infimum is attained. We claim that
\begin{equation}\label{eq:euclidean-lipschitz-obstruction}
 \lambda(\Psi)\le0\qquad\text{whenever }\deg\Psi\ne0.
\end{equation}
If $\lambda(\Psi)>0$, combining the positive spectral lower bound with the trace interpolation inequality, we have
$Q_\Psi(v)\ge C_1\|v\|_{H^1(X)}^2$ for some $C_1>0$.
Fix $p>n-1$. Since $\Psi$ is Lipschitz, there exist smooth $\R^n$-valued maps $\widetilde\Psi_j$ converging to $\Psi$ uniformly and strongly in $W^{1,p}(S,\R^n)$.
For sufficiently large $j$, we have
$|\widetilde\Psi_j|\ge1/2$, and hence the maps
$$
 \Psi_j=\frac{\widetilde\Psi_j}
 {|\widetilde\Psi_j|}
$$
are smooth and take values in $\SSS^{n-1}$. Since the radial projection is smooth in a neighborhood of $\SSS^{n-1}$, the uniform and strong $W^{1,p}$ convergence of $\widetilde\Psi_j$ imply that $\Psi_j\to\Psi$ uniformly on $S$ and strongly in $W^{1,p}(S,\R^n)$.
Moreover, uniform convergence gives a homotopy between
$\Psi_j$ and $\Psi$, so $\deg\Psi_j=\deg\Psi$ for all sufficiently large $j$. Since the trace norm satisfies
$$
 \bigl|\|A\|_{\tr}-\|B\|_{\tr}\bigr|
 \le\|A-B\|_{\tr}
 \le\sqrt{n-1}|A-B|_{\mathrm{HS}},
$$
where $|\cdot|_{\mathrm{HS}}$ denotes the Hilbert--Schmidt norm, the strong convergence of $d\Psi_j$ to $d\Psi$ in $L^p(S)$ implies that $D_{\Psi_j}\to D_\Psi$ strongly in $L^p(S)$.
Since $p>n-1$, the trace theorem and the Sobolev embedding on $S$ yield that
$$
 \|v\|_{L^{2p/(p-1)}(S)}
 \le C\|v\|_{H^1(X)}.
$$
Consequently, by the H\"older inequality we obtain
$$
\begin{aligned}
 |Q_{\Psi_j}(v)-Q_\Psi(v)|
 &\le
 \|D_{\Psi_j}-D_\Psi\|_{L^p(S)}
 \|v\|_{L^{2p/(p-1)}(S)}^2\\
 &\le
 C\|D_{\Psi_j}-D_\Psi\|_{L^p(S)}
 \|v\|_{H^1(X)}^2.
\end{aligned}
$$
Choose $\xi_j\in C^\infty(S)$ with $\xi_j\ge D_{\Psi_j}$
and $\|\xi_j-D_{\Psi_j}\|_\infty\to0$.
Since $Q_\Psi$ is coercive and
$D_{\Psi_j}$ converges strongly to $D_\Psi$ in $L^p(S)$, the trace
theorem implies that, for all sufficiently large $j$,
$$
 \int_X\left(|dv|_g^2+\frac12\Sc_gv^2\right)
 +\int_S(H_S-\xi_j)v^2
 \ge C_2\|v\|_{H^1(X)}^2
$$
for some $C_2>0$. Applying the trace theorem once more, we obtain constants $a>0$ and $b>0$ such that
$$
 \int_X\left(|dv|_g^2+\frac12\Sc_gv^2\right)
 +\int_S(H_S-\xi_j)v^2
 \ge a\int_Xv^2+b\int_Sv^2
$$
for every $v\in C^\infty(X)$, without imposing any boundary
condition on $v$.
This contradicts Corollary \ref{cor:positive-form} and hence confirms the claim
\eqref{eq:euclidean-lipschitz-obstruction}.

Since $\rad(S,g_S)=1$, there are smooth maps
$F_j:S\to\SSS^{n-1}$ of nonzero degree with
$\Lip F_j\le1+1/j$. By the Arzel\`a--Ascoli theorem, a subsequence converges uniformly to a map $F:S\to\SSS^{n-1}$ with $\Lip F\le1$. For large $j$, the maps $F_j$ and $F$ are homotopic, so $\deg F\ne0$.
The inequalities $\Sc_g\ge0$ and $H_S\ge n-1\ge D_F$ imply $Q_F\ge0$. Thus by
\eqref{eq:euclidean-lipschitz-obstruction} we have $\lambda(F)=0$.
Since $Q_F$ is a sum of nonnegative terms and
$\lambda(F)=0$, a nonzero minimizer $u$ satisfies $\int_X|du|_g^2=0$. As $X$ is connected, $u$ is a nonzero constant. Vanishing of the other nonnegative
terms gives
\begin{equation}\label{eq:euclidean-saturation}
\begin{aligned}
 \Sc_g=0\quad\text{in }X,\qquad H_S=n-1\quad\text{on }S,\qquad
 D_F=n-1\quad\text{a.e. on }S.
\end{aligned}
\end{equation}
The $n-1$ singular values of $dF$ are at most one and have sum $n-1$. Hence $(dF)^*dF=I$ almost everywhere.

We next prove that $F$ is smooth. For any
$\zeta\in C^\infty(S,\R^n)$, set
$$
 F_t=\frac{F+t\zeta}{|F+t\zeta|}.
$$
For small positive and negative $t$, these are Lipschitz maps
homotopic to $F$. Since $(dF)^*dF=I$ almost everywhere,
expanding $D_{F_t}=\tr\bigl(((dF_t)^*dF_t)^{1/2}\bigr)$ gives
\begin{equation}\label{eq:euclidean-trace-variation}
 D_{F_t}=n-1+t b_\zeta+O(t^2)\quad\text{in }L^\infty(S),\qquad
 b_\zeta=\langle dF,d\zeta\rangle-(n-1)\langle F,\zeta\rangle.
\end{equation}
By \eqref{eq:euclidean-saturation}, $Q_F(v)=\int_X|dv|_g^2$. Let $u_0=\Vol_g(X)^{-1/2}$, and for each sufficiently small $t$, let $u_t$ be a nonnegative first eigenfunction associated with the quadratic form $Q_{F_t}$ and eigenvalue $\lambda(F_t)$, normalized by $\int_Xu_t^2=1$.
Testing the Rayleigh quotient with $u_0$ and using the trace interpolation inequality give
$$
 \lambda(F_t)=O(|t|),
 \qquad
 \int_X|du_t|_g^2=O(|t|).
$$
Connectedness and the Poincar\'e inequality, together with the normalization and nonnegativity of $u_t$, imply that
$u_t\to u_0$ strongly in $H^1(X)$. By the trace theorem, $u_t\to u_0$ strongly in $L^2(S)$, and hence
$$
 \int_S b_\zeta u_t^2
 =
 \frac1{\Vol_g(X)}\int_S b_\zeta+o(1).
$$
Evaluating the Rayleigh quotient at $u_0$ gives the upper bound, while using the minimizing eigenfunction $u_t$ and discarding the nonnegative Dirichlet energy gives the corresponding lower bound. Combining these estimates, we obtain
\begin{equation}\label{eq:euclidean-eigenvalue-variation}
 \lambda(F_t)
 =-\frac{t}{\Vol_g(X)}\int_S b_\zeta+o(|t|).
\end{equation}

Each $F_t$ has nonzero degree, so
\eqref{eq:euclidean-lipschitz-obstruction} gives $\lambda(F_t)\le0$ for both signs of $t$.
It follows from \eqref{eq:euclidean-eigenvalue-variation} that $\int_S b_\zeta=0$. We therefore have
$$
 \int_S\langle dF,d\zeta\rangle
 =(n-1)\int_S\langle F,\zeta\rangle.
$$
Since $\zeta\in C^\infty(S,\R^n)$ is arbitrary, it follows that $-\Delta_{g_S}F=(n-1)F$ weakly. Regularity theory for elliptic equation shows that $F$ is smooth, and $(dF)^*dF=I$ then holds everywhere.

We now have $\Sc_g=0$, $H_S=n-1=(n-1)\|dF\|_{\op}$, and $\deg F\ne0$. By \cite[Theorem 1.4]{WWXZ}, there exists $c>0$ such that $(X,cg)$ is isometric to the Euclidean unit ball. In particular, $H_S^{cg}=n-1$. Since mean curvature scales under constant rescaling of the metric as
$$
H_S^{cg}=c^{-1/2}H_S^g=c^{-1/2}(n-1),
$$
we conclude that $c=1$. Hence $(X,g)$ itself is isometric to the Euclidean unit ball.

Recall that we reduced to the case $r=1$ by replacing the original metric $g$ with $r^{-2}g$. Since the rescaled manifold is isometric to the closed Euclidean unit ball, rescaling back shows that $(X,g)$ is isometric to the closed Euclidean ball of
radius $r$. Conversely, for the Euclidean ball of radius $r$, the identity map on its boundary gives $\rad(S,g_S)\ge r$, while the comparison established at the beginning yields the reverse inequality. This completes the proof. 
\end{proof}

\bibliography{bib}
\bibliographystyle{plain}

\end{document}